\documentclass[a4paper, 10pt]{amsart}
\usepackage{amssymb}

\usepackage{amsmath,amssymb,amsthm}
\usepackage[latin1]{inputenc}
\usepackage{version,tabularx,multicol}
\usepackage{graphicx,float,psfrag}
\usepackage{stmaryrd}
 \usepackage{color}
\usepackage{latexsym, bbding}

 \setlength{\topmargin}{-0.5cm}
 \setlength{\oddsidemargin}{0pt}
 \setlength{\evensidemargin}{0pt}
 \setlength{\textwidth}{16.4cm}
 \setlength{\textheight}{24cm}
 \setlength{\parindent}{12pt}
 \setlength{\parskip}{4pt}






\theoremstyle{plain}
\newtheorem{theorem}{Theorem}[section]

 \newtheorem{corollary}[theorem]{Corollary}

 \newtheorem{lemma}[theorem]{Lemma}

 \newtheorem{remark}[theorem]{Remark}





\usepackage{mathrsfs}

\numberwithin{equation}{section}

 \def\qed{\hfill$\Box$\medskip}

 \def\mbb{\mathbb}

 \def\<{\langle}\def\>{\rangle}

 \def\beqlb{\begin{eqnarray}}\def\eeqlb{\end{eqnarray}}
 \def\beqnn{\begin{eqnarray*}}\def\eeqnn{\end{eqnarray*}}

 \def\mbb{\mathbb}

 \def\<{\langle}\def\>{\rangle}

 \def\beqlb{\begin{eqnarray}}\def\eeqlb{\end{eqnarray}}
 \def\beqnn{\begin{eqnarray*}}\def\eeqnn{\end{eqnarray*}}
  

\title[Harmonic moments and Large deviations]
 {Harmonic moments and large deviations for a critical Galton-Watson process with immigration}

\date{\today}
\address{School of Mathematical Sciences, Beijing Normal University, Beijing 100875, People's Republic of China.}
 \author{Doudou Li, Mei Zhang$^\ast$ }
\email{lidoudou@mail.bnu.edu.cn, \Envelope meizhang@bnu.edu.cn(*Corresponding
author)}
\keywords{harmonic moment, large deviation, local probability estimate, immigration.\\
Supported by NSFC (No. 11871103)
}

\begin{document}
\subjclass[2010]{60J80, 60F10}

\begin{abstract}
  In this paper, a critical Galton-Watson branching process with immigration $Z_{n}$ is studied. We first obtain the convergence rate of the harmonic moment of $Z_{n}$. Then the large deviation of  $S_{Z_n}:=\sum_{i=1}^{Z_n} X_i$ is obtained, where $\{X_i\}$ is a sequence of independent and identically distributed zero-mean random variables with tail index $\alpha>2$. We shall see that the converging rate is determined by the immigration mean, the variance of reproducing and the tail index  of $X_1^+$, comparing to  previous result for supercritical case, where the rate depends on the Schr\"{o}der constant and the tail index.
\end{abstract}

\maketitle

\section{Introduction and Main results}

Suppose $\{\xi_{ni},n,i\geq 1\}$ is a sequence of non-negative integer-valued independent and identically distributed (i.i.d.) random variables with  generating function $f(s)=\sum_{i=0}^\infty p_is^i$. \{$Y_n,n\geq 1$\} is another sequence of non-negative integer-valued i.i.d. random variables with  generating function $h(s)=\sum_{i=0}^\infty h_is^i$. $\{\xi_{ni},~n,i\geq1\}$ are independent of \{$Y_n,n\geq 1$\}. Define $\{Z_{n}\}$ recursively as
\begin{align}\label{1.1}
Z_{n}=\sum_{i=1}^{Z_{n-1}}\xi_{ni}+Y_n,~~n\geq 1,\quad Z_{0}= 0.
\end{align}
$\{Z_n, n\ge 0\}$ is called a Galton-Watson branching process with immigration (GWI). Denote $m:=\mbb{E}\xi_{11}$. When $m>1, m=1$ or $m<1$, we shall refer to $\{Z_n\}$ as supercritical, critical and subcritical, respectively. In this paper, we will study the critical case. From \eqref{1.1}, the generating function of $Z_n$ can be expressed by
\begin{align}\label{1.2}
H_{n}(x)=\prod_{k=0}^{n-1}h[f_{k}(x)],\quad n\ge 1,
\end{align}
where $f_{k}(x)$ denotes the $k$th iteration of the function $f(x)$ and $f_{0}(x)=x$. Denote $\beta=h'(1)$ and $\gamma=\frac{1}{2}f''(1)$~(where $h'(s):=\sum_{i=1}^\infty ih_is^{i-1}$, $f''(s):=\sum_{i=2}^\infty i(i-1)p_is^{i-2},~|s|<1)$. Define  $\sigma=\frac{\beta}{\gamma}$. In the present paper, we suppose the following condition holds:

(A) $g.c.d.\{k:p_{k}>0\}=1,~0< p_{0},h_{0}< 1,~m=1,~0<\beta<\infty,~0< \gamma< \infty$, $\sum\limits_{j=1}^{\infty}p_{j}j^{2}\log j< \infty$,~ $\sum\limits_{j=1}^{\infty}h_{j}j^{2}<\infty$.

Define the harmonic moment of $Z_{n}$ by
\begin{align*}
J_{n}(r)=\mbb{E}[Z_{n}^{-r}|Z_{n}>0],\quad r>0.
\end{align*}
   There have been some research works on the asymptotic behavior of $J_{n}(r)$ for the branching processes with or without immigration. For the supercritical case, we can refer to \cite{H71,N67,N03,P75,S17}. For the critical case, Nagaev~\cite{N67} investigated the asymptotic behavior of $J_{n}(1)$ for the Galton-Watson process without immigration. Later, Pakes~\cite{P75} refined  the  results of \cite{N67} under further moment assumption and considered the immigration. Recently, Li and Zhang~\cite{18} proved the conjecture that $J_{n}(1)\sim \frac{\log n}{\gamma n}(\sigma=1)$ in \cite{P75}, and obtained the convergence rate for $J_{n}(2)$.

 In the first part of this paper, we shall  study  the limit of $J_{n}(r)$ for the critical Galton-Watson branching process with immigration $Z_n$ defined by  \eqref{1.1}. We can see that there is  a phase transition: For $r>\sigma$, $r=\sigma$~or $r<\sigma$,  $J_{n}(r)$ has different asymptotic behaviors; see Theorem~\ref{t1.1}. Our main technical tool in the proofs is precise estimate of  generating function $H_{n}$. This theorem improves our previous result in \cite{18}.

By  \eqref{1.1},
$$
I_{\{Z_{n}>0\}}\frac{Z_{n+1}-mZ_{n}}{Z_n}\stackrel{d}=
I_{\{Z_{n}>0\}}\left(\frac{1}{Z_n}\sum_{i=1}^{Z_n}X_i+\frac{Y_{n+1}}{Z_n}\right),$$
where $\{X_i\}$ are i.i.d. variables,  $X_{1}\stackrel{d}=\xi_{11}-m$, and $\{X_i\}$ are independent of $Z_n$ and $Y_{n+1}$. We have known that as $n\to \infty$, $Z_{n+1}/Z_n\xrightarrow{ {P}}m$ (see \cite {AN72} and \cite {LZ16}). There have been some research works on the converging rate of $Z_{n+1}/Z_n$ (\cite{A94,18,N03}). To study the large deviations for $Z_{n+1}/Z_n$,
harmonic moment  plays a significant role, see for example, \cite{18,N03}.

Now let us see a general case. Let $S_{n}=\sum_{i=1}^{n}X_{i}$,  where $\{X_{i},i\geq 1\}$ are i.i.d. random variables, independent of $Z_{n}$ and satisfy
\begin{align}\label{1.3}
\mbb{E}X_{1}=0,\quad \sigma_{0}^{2}:=\mbb{E}X_{1}^{2}\in(0,\infty).
\end{align}
Define $X_{1}^{+}=X_{1}\vee 0$. We say that $X_{1}^{+}$ has a tail of index $\alpha$, if
\begin{align}\label{1.6}
\mbb{P}(X_{1}^{+}\geq x)\sim ax^{-\alpha},\quad x\rightarrow\infty,
\end{align}
for some  $a>0$.  Define
\begin{align}\label{1.4}
L_n=\frac{S_{Z_{n}}}{Z_{n}}I_{\{Z_{n}>0\}}.
\end{align}
By Pakes~\cite{P72}, $\mbb{P}\{Z_n>0\}\to 1$, so $L_n$ is well-defined. We can prove that (see \cite[Theorem 3]{N67} for more detail)
$$
\mbb{P}(\sqrt{n}L_n\le x)\to \phi(x),
$$
where $$\int_{0}^{\infty}e^{itx}d\phi(x)=
\frac{1}{\Gamma(\sigma)\gamma^{\sigma}}\int_{0}^{\infty}e^{-\frac{\sigma_{0}^{2} t^{2}}{2y}}y^{\sigma-1}e^{-\frac{y}{\gamma}}dy,$$
and $\phi(x)\rightarrow1$ as $x\rightarrow\infty$. Therefore, if  $\varepsilon_{n}\rightarrow 0 $ and $n\varepsilon^{2}_{n}\rightarrow\infty$, then

\begin{align*}
\mbb{P}(L_{n}\geq \varepsilon_{n})\to 0,  \quad n\rightarrow\infty.
\end{align*}
  We are interested in the rate of such convergence. If $X_{1}\stackrel{d}=\xi_{11}-m$, then  $L_{n}$ has the same limit behavior as  $\frac{Z_{n+1}}{Z_n}-m$. When  $Z_n$ is a supercritical Galton-Watson process (without immigration), the large deviation of $L_n$ was considered in \cite{N04} and \cite{F07}.

In the second part of the paper, we focus on the large deviations of $L_n$, i.e., the convergence rate of $\mbb{P}(L_{n}\geq \varepsilon_{n})$, where $Z_n$ is a critical Galton-Watson branching process with immigration defined by  \eqref{1.1}, $\{X_{i},i\geq 1\}$ are i.i.d. random variables  satisfying \eqref{1.3}, and  $\{\varepsilon_{n}\}$ is a sequence of positive numbers such that
\begin{align}\label{1.5}
\varepsilon_{n}\rightarrow 0,\quad \mbox{and}\quad  n\varepsilon^{2}_{n}\rightarrow\infty.
\end{align}
The converging rate depends on the tail index of $X_1^+$ and $\sigma$; see Theorem~\ref{t1.2}.

 Here are our main results.

\begin{theorem}\label{t1.1}
Suppose  condition (A) is satisfied and $\sigma>0$. For $r>0$, define
\begin{align}\label{1.2.1}
A(n,r)=
\begin{cases}
n^{\sigma},& r>\sigma,  \\
\frac{n^{\sigma}}{\log n},& r=\sigma, \\
n^{r},& r<\sigma.
\end{cases}
\end{align}
Then we have
\begin{align}\label{1.2.2}
\lim\limits_{n\rightarrow\infty}A(n,r)\mbb{E}(Z_{n}^{-r}|Z_{n}>0)=I(r,\sigma),
\end{align}
where
\begin{align*}
I(r,\sigma):=
\begin{cases}
\frac{1}{\Gamma(r)}\int_{0}^{\infty}(U(e^{-t})-U(0))t^{r-1}dt
,& \mbox{if}~r>\sigma,  \\
\frac{1}{\Gamma(r)}\gamma^{-\sigma},& \mbox{if}~r=\sigma, \\
\frac{1}{\Gamma(r)}\int_{0}^{\infty}(1+\gamma s)^{-\sigma}s^{r-1}ds,& \mbox{if}~r<\sigma,
\end{cases}
\end{align*}
and $U(\cdot)$ is defined as in Lemma \ref{l2.1}.
\end{theorem}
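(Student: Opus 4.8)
The plan is to compute the limit of $A(n,r)\mbb{E}(Z_n^{-r}\mid Z_n>0)$ through the integral representation of negative moments in terms of the generating function $H_n$. Specifically, I would start from the classical identity
\begin{align*}
\mbb{E}(Z_n^{-r}\mid Z_n>0)=\frac{1}{\Gamma(r)}\cdot\frac{1}{\mbb{P}(Z_n>0)}\int_0^\infty t^{r-1}\bigl(H_n(e^{-t})-H_n(0)\bigr)\,dt,
\end{align*}
valid for $r>0$, where $H_n(0)=\mbb{P}(Z_n=0)=\prod_{k=0}^{n-1}h(f_k(0))$. Since the immigration condition $0<p_0,h_0<1$ and Pakes' result give $\mbb{P}(Z_n>0)\to 1$, the prefactor $1/\mbb{P}(Z_n>0)\to 1$ and can be discarded asymptotically. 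So everything reduces to the asymptotics of the integral $\int_0^\infty t^{r-1}(H_n(e^{-t})-H_n(0))\,dt$ as $n\to\infty$, after the appropriate rescaling $A(n,r)$.

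The key input is a precise estimate of $H_n(x)$ near $x=1$, which by \eqref{1.2} is governed by the iterates $f_k(x)$. The standard critical-case asymptotics give $1-f_k(x)\sim \frac{1}{\gamma k}$ as $k\to\infty$ uniformly, with a sharper expansion (involving the $\log$ correction from the $\sum p_j j^2\log j<\infty$ hypothesis) encoded in the function $U$ of Lemma~\ref{l2.1}; and $\sum_{k=0}^{n-1}(1-h(f_k(x)))\approx \beta\sum_{k=0}^{n-1}(1-f_k(x))\approx \beta\sum_{k}\frac{1}{\gamma k}\approx \sigma\log n$, so that $H_n(x)\approx n^{-\sigma}$ up to the $U$-dependent corrections. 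The right substitution in the integral is $t = s/(\text{something of order }n)$ for the regime $r<\sigma$ (where the mass concentrates at scale $t\sim 1/n$ and one gets the $(1+\gamma s)^{-\sigma}$ integrand after using $1-f_n(e^{-s/n})\to$ an explicit limit), while for $r>\sigma$ the integral $\int_0^\infty t^{r-1}(U(e^{-t})-U(0))\,dt$ converges and the dominant contribution comes from $t$ of order $1$ (so $H_n(e^{-t})\sim n^{-\sigma}U(e^{-t})$ and $H_n(0)\sim n^{-\sigma}U(0)$), and at the boundary $r=\sigma$ one gets the extra $\log n$ from the logarithmic divergence of $\int_1^\infty t^{\sigma-1}\cdot t^{-\sigma}\,dt$-type behaviour. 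In each regime I would isolate the main term, bound the remainder using uniform estimates on $f_k$ and $H_n$ from the preliminary lemmas, and apply dominated convergence.

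The main obstacle I expect is establishing the uniform (in $x\in[0,1)$ or $x=e^{-t}$ with $t$ in the relevant range) asymptotic expansion of $H_n(x)$ with enough precision to simultaneously (i) identify the limiting integrand $U(e^{-t})-U(0)$, $\gamma^{-\sigma}$, or $(1+\gamma s)^{-\sigma}$ in the three regimes, and (ii) produce an integrable dominating function so that the interchange of limit and integral is justified — in particular near $t=0$ (where $H_n(e^{-t})-H_n(0)$ must vanish at the right rate, requiring control of $\partial_x H_n$ or a Lipschitz-type bound) and near $t=\infty$ (where $H_n(e^{-t})-H_n(0)$ is tiny but must be controlled uniformly in $n$). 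Handling the logarithmic correction terms (from the $j^2\log j$ moment condition) carefully enough to see that they do not disturb the leading constant $I(r,\sigma)$, while being exactly what makes the $r=\sigma$ case produce $\gamma^{-\sigma}/\Gamma(\sigma)$ rather than something else, is the delicate part; this is presumably where Lemma~\ref{l2.1} and the sharp iterate estimates are used, and is likely the technical heart of the argument.
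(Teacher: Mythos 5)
Your proposal follows essentially the same route as the paper: it starts from the identity $\mbb{E}(Z_n^{-r}\mid Z_n>0)=\frac{1}{\Gamma(r)\mbb{P}(Z_n>0)}\int_0^\infty t^{r-1}(H_n(e^{-t})-H_n(0))\,dt$, which is exactly \eqref{3.0}, discards $\mbb{P}(Z_n>0)\to 1$, and then uses the two complementary asymptotics $n^{\sigma}H_n(x)\to U(x)$ (Lemma~\ref{l2.1}) for the $t=O(1)$ regime and $H_n(e^{-s/n})\sim(1+\gamma s)^{-\sigma}$ with the matching dominating bound (Lemma~\ref{l2.2}) for the $t\sim 1/n$ regime, with the $\log n$ emerging at $r=\sigma$ from the logarithmic divergence at the junction — precisely the structure of the paper's Lemmas~\ref{l4.1} and~\ref{l3.1}. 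One small misconception worth flagging: the constant $\gamma^{-\sigma}/\Gamma(\sigma)$ in the $r=\sigma$ case comes from the leading-order $(1+\gamma s)^{-\sigma}$ tail alone, not from "logarithmic correction terms" tied to the $\sum p_jj^2\log j<\infty$ hypothesis (that hypothesis just guarantees Lemma~\ref{l2.2}'s refined uniform asymptotic holds), but this does not affect the validity of your strategy.
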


 Let
\begin{align}\label{1.7}
\rho=\frac{1+\sigma-\alpha}{2\sigma-\alpha}.
\end{align}

\begin{theorem}\label{t1.2}
Suppose  condition (A) is satisfied and $\sigma>1$. \\
(a)~If
$
\mbb{E}(X_{1}^{+})^{1+\sigma}<\infty,
$
or if $X_{1}^{+}$ has a tail of index $\alpha\in (2,1+\sigma)$ and $\varepsilon_{n}n^{\rho}\rightarrow0$ as $n\rightarrow\infty$, then
\begin{align}\label{t1.2a}
\lim_{n\rightarrow\infty}\varepsilon^{2\sigma}_{n}n^{\sigma}
\mbb{P}(L_{n}\geq \varepsilon_{n})=\Upsilon(\sigma,\sigma_{0});
\end{align}
(b)~if $X_{1}^{+}$ has a tail of index $\alpha\in (2,1+\sigma)$ and $\varepsilon_{n}n^{\rho}\rightarrow\infty$ as $n\rightarrow\infty$, then
\begin{align}\label{t1.2b}
\lim_{n\rightarrow\infty}\varepsilon^{\alpha}_{n}n^{\alpha-1}
\mbb{P}(L_{n}\geq \varepsilon_{n})=aI(\alpha-1,\sigma);
\end{align}
(c)~if $X_{1}^{+}$ has a tail of index $\alpha\in (2,1+\sigma)$ and $\varepsilon_{n}n^{\rho}\rightarrow \tau\in(0,\infty)$ as $n\rightarrow\infty$, then
\begin{align}\label{t1.2c}
\lim_{n\rightarrow\infty}n^{\frac{\sigma(\alpha-2)}{2\sigma-\alpha}}
\mbb{P}(L_{n}\geq \varepsilon_{n})=\tau^{-2\sigma}\Upsilon(\sigma,\sigma_{0})+
\tau^{-\alpha}aI(\alpha-1,\sigma);
\end{align}
where
\begin{align*}
\Upsilon(\sigma,\sigma_{0})
=\frac{2^{\sigma-1}\Gamma(\sigma+\frac{1}{2})}
{\Gamma(\sigma)\gamma^{\sigma}\sigma\sqrt{\pi}}\sigma_{0}^{2\sigma},
\end{align*}
and $\Gamma(\cdot)$ is the Gamma function.
\end{theorem}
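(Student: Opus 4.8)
The plan is to condition on $Z_n$ and exploit the decomposition $\mbb{P}(L_n\ge\varepsilon_n)=\sum_{k\ge 1}\mbb{P}(Z_n=k)\,\mbb{P}(S_k\ge k\varepsilon_n)$. The behaviour of $\mbb{P}(S_k\ge k\varepsilon_n)$ splits according to how $k\varepsilon_n$ compares with $\sqrt{k}$ (the Gaussian scale) and with $k$ (the single-big-jump scale). When $k$ is of order $\varepsilon_n^{-2}$ — which, after Theorem~\ref{t1.1}, is precisely the order of $Z_n$ that dominates the sum once we insert the harmonic-moment asymptotics — the relevant deviation $x_k=k\varepsilon_n$ sits in the moderate-deviation window, and one invokes a precise moderate/large deviation estimate for sums of i.i.d. zero-mean variables with a regularly varying right tail. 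I would use a Cramér/Nagaev-type expansion: for $x$ in the moderate range, $\mbb{P}(S_k\ge x)\sim \bar\Phi(x/(\sigma_0\sqrt k))$, while for $x$ in the large range (when the tail index $\alpha$ allows), $\mbb{P}(S_k\ge x)\sim k\,\mbb{P}(X_1^+\ge x)=ak x^{-\alpha}$; the crossover happens exactly at $x\asymp \sqrt{k\log k}$ type scales, which translates (via $k\asymp\varepsilon_n^{-2}$) into the threshold $\varepsilon_n n^\rho$ with $\rho$ as in \eqref{1.7}. This is the structural reason the three regimes (a), (b), (c) appear.

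For part (a), either the moment hypothesis $\mbb{E}(X_1^+)^{1+\sigma}<\infty$ or the tail hypothesis together with $\varepsilon_n n^\rho\to 0$ forces the Gaussian term to dominate. I would write
\begin{align*}
\varepsilon_n^{2\sigma}n^\sigma\,\mbb{P}(L_n\ge\varepsilon_n)
=\varepsilon_n^{2\sigma}n^\sigma\sum_{k\ge 1}\mbb{P}(Z_n=k)\,\mbb{P}(S_k\ge k\varepsilon_n),
\end{align*}
substitute $\mbb{P}(S_k\ge k\varepsilon_n)\approx\bar\Phi(\sqrt k\,\varepsilon_n/\sigma_0)$, and recognize the sum as $n^\sigma\,\mbb{E}\big[\psi(Z_n\varepsilon_n^2)\big]$ for $\psi(u)=\bar\Phi(\sqrt u/\sigma_0)$. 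Since $Z_n\varepsilon_n^2$ is, on the event $Z_n>0$, of order $n\varepsilon_n^2\to\infty$, I rescale: setting $t=k/n$ and using that $n^\sigma\mbb{P}(Z_n>0)\mbb{P}(Z_n=\lfloor nt\rfloor\mid Z_n>0)$ converges (this is the local limit / density statement underlying Theorem~\ref{t1.1}, essentially $Z_n/n$ having a limiting density $\propto$ that of the Gamma-mixture appearing in $\phi$), the sum becomes a Riemann-integral limit. Collecting $\varepsilon_n^{2\sigma}$ against $\bar\Phi(\sqrt{nt}\,\varepsilon_n/\sigma_0)$ and changing variables $v=nt\varepsilon_n^2$, the $n$ and $\varepsilon_n$ dependence cancels and one is left with $\int_0^\infty \bar\Phi(\sqrt v/\sigma_0)\,v^{\sigma-1}e^{\cdots}\,dv/\Gamma(\sigma)\gamma^\sigma$; evaluating this Gamma-type integral (integrate by parts once, then use $\int_0^\infty v^{\sigma-1/2}e^{-v/(2\sigma_0^2)}dv$) produces exactly $\Upsilon(\sigma,\sigma_0)=\frac{2^{\sigma-1}\Gamma(\sigma+\frac12)}{\Gamma(\sigma)\gamma^\sigma\sigma\sqrt\pi}\sigma_0^{2\sigma}$. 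The tail contribution from atypically large $k$ or from the single-big-jump term must be shown to be $o(\varepsilon_n^{-2\sigma}n^{-\sigma})$, and this is where the hypothesis $\varepsilon_n n^\rho\to 0$ (resp. $\alpha\ge 1+\sigma$) is spent: it guarantees $k\varepsilon_n\lesssim\sqrt{k\log k}$ uniformly over the dominant range, keeping us inside the Gaussian regime.

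For part (b), the opposite inequality $\varepsilon_n n^\rho\to\infty$ pushes $k\varepsilon_n$ into the large-deviation regime for the relevant $k$, so $\mbb{P}(S_k\ge k\varepsilon_n)\sim ak(k\varepsilon_n)^{-\alpha}=a\varepsilon_n^{-\alpha}k^{1-\alpha}$. Then
\begin{align*}
\varepsilon_n^\alpha n^{\alpha-1}\,\mbb{P}(L_n\ge\varepsilon_n)
\sim a\,n^{\alpha-1}\sum_{k\ge 1}\mbb{P}(Z_n=k)\,k^{1-\alpha}
=a\,n^{\alpha-1}\,\mbb{E}\!\left[Z_n^{-(\alpha-1)}I_{\{Z_n>0\}}\right],
\end{align*}
and since $\alpha-1<\sigma$, Theorem~\ref{t1.1} (case $r<\sigma$) gives $n^{\alpha-1}\mbb{E}[Z_n^{-(\alpha-1)}\mid Z_n>0]\to I(\alpha-1,\sigma)$ while $\mbb{P}(Z_n>0)\to 1$; this yields $aI(\alpha-1,\sigma)$. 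Again one must control the Gaussian-regime part (small $k$, where $k\varepsilon_n\lesssim\sqrt k$) and show it is negligible on this scale — here $\varepsilon_n n^\rho\to\infty$ does the job. Part (c) is the balanced case: both contributions survive, with explicit constants $\tau^{-2\sigma}$ and $\tau^{-\alpha}$ appearing because of the common normalization $n^{\sigma(\alpha-2)/(2\sigma-\alpha)}$; the computation is the superposition of (a) and (b), with the two cutoffs now meeting at a finite ratio $\tau$.

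The main obstacle is the uniform deviation estimate for $\mbb{P}(S_k\ge k\varepsilon_n)$ over the \emph{whole} range of $k$ with the remainder controlled tightly enough to survive multiplication by $n^\sigma$ (or $n^{\alpha-1}$), together with the matching local-limit / density control of $Z_n/n$ that legitimizes passing the sum to an integral; the transition region $\varepsilon_n n^\rho\to\tau$ is delicate because neither approximation is sharp there and one needs a single estimate valid across the crossover. I expect this to be handled by a Fuk–Nagaev or truncation argument for $S_k$ combined with the generating-function estimates for $H_n$ advertised after Theorem~\ref{t1.1}, splitting $k$ into dyadic or $n$-scaled blocks and bounding tail blocks crudely via harmonic moments of higher order (whose decay is controlled by the $r>\sigma$ case of Theorem~\ref{t1.1}).
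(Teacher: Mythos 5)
Your proposal matches the paper's method almost exactly: the total-probability decomposition $\mbb{P}(L_n\ge\varepsilon_n)=\sum_k\mbb{P}(Z_n=k)\mbb{P}(S_k\ge\varepsilon_n k)$, a split of $k$ into the Gaussian range $k\asymp\varepsilon_n^{-2}$ (CLT for $S_k$ plus local limit theorem for $Z_n/n$, yielding the Gamma integral that evaluates to $\Upsilon(\sigma,\sigma_0)$), the single-big-jump range $k\asymp n$ (the Nagaev/Borovkov expansion $\mbb{P}(S_k\ge x)\sim k\mbb{P}(X_1\ge x)$ combined with the harmonic-moment asymptotics of Theorem~\ref{t1.1} at $r=\alpha-1<\sigma$), the threshold $\varepsilon_n n^\rho$ emerging from where these regimes cross, and Fuk--Nagaev plus local probability bounds to kill the remainders — this is precisely the paper's Lemmas~\ref{l2.3}--\ref{l2.12} and the proof in Section~3. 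The one small imprecision is attributing the local density of $Z_n/n$ to ``the local limit underlying Theorem~\ref{t1.1}''; what is actually needed is Mellein's local limit theorem (Lemma~\ref{l2.3}), while Theorem~\ref{t1.1} supplies the harmonic moments and (via the $r>\sigma$ case and Lemma~\ref{l2.5}) the upper bounds on $\mbb{P}(Z_n=k)$ used to discard the small-$k$ block.
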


  \begin{remark}

By the total probability formula we have the decomposition:
\begin{align}\label{5.0}
\mbb{P}(L_{n}\geq \varepsilon_{n})
=\sum\limits_{k=1}^{\infty}\mbb{P}(Z_{n}=k)\mbb{P}(S_{k}\geq \varepsilon_{n}k).
\end{align}
We shall split the sum \eqref{5.0} according to $k$ as the following, and estimate the limit behavior of each part and then sum them up:

(a) $k\in (0,\delta/\varepsilon^{2}_{n}],~k\in (\delta/\varepsilon^{2}_{n},M/\varepsilon^{2}_{n}],~
k\in(M/\varepsilon^{2}_{n},\infty)$;

(b) $k\in (0,\delta n],~k\in(\delta n,\infty)$;

(c) $k\in (0,\delta/\varepsilon^{2}_{n}],~k\in (\delta/\varepsilon^{2}_{n},M/\varepsilon^{2}_{n}],~
k\in(M/\varepsilon^{2}_{n},\delta n],~k\in(\delta n,\infty)$,\\
where $\delta\in(0,1)$ and $M\geq 1$.

This decomposition is similar to that of Schr\"{o}der case in \cite{F07} (which is on the supercritical case without immigration). However, differently from \cite{F07}, we need the local probability estimate of the critical GWI $Z_{n}$, which are discussed and  presented by Lemmas~\ref{l2.3}--\ref{l2.5}. We shall see that the converging rate is determined by $\sigma$ (involving the mean of immigration and the variance of branching) and the tail index $\alpha$ of $X_1^+$, while in \cite[Theorem 5]{F07} the rate  depends on the Schr\"{o}der constant and the tail index.
\end{remark}

\begin{theorem}\label{c1.5}
(critical value of $\sigma$) Suppose  condition (A) is satisfied, $\sigma>1$, and $X_{1}^{+}$ has a tail of index $\alpha=1+\sigma$. \\
(a)~If $\varepsilon^{\sigma-1}_{n}\log n\rightarrow0$ as $n\rightarrow\infty$, then
\begin{align}\label{c1}
\lim_{n\rightarrow\infty}\varepsilon^{2\sigma}_{n}n^{\sigma}
\mbb{P}(L_{n}\geq \varepsilon_{n})=\Upsilon(\sigma,\sigma_{0}).
\end{align}
(b) If $\varepsilon^{\sigma-1}_{n}\log n\rightarrow\infty$ as $n\rightarrow\infty$, then
\begin{align}\label{cb}
\lim_{n\rightarrow\infty}\frac{1}{\log n}\varepsilon^{\sigma+1}_{n}n^{\sigma}
\mbb{P}(L_{n}\geq \varepsilon_{n})=aI(\sigma,\sigma).
\end{align}
(c) If $\varepsilon^{\sigma-1}_{n}\log n\rightarrow\tau\in(0,\infty)$ as $n\rightarrow\infty$, then
 \begin{align}\label{c1.7}
\lim_{n\rightarrow\infty}\varepsilon^{2\sigma}_{n}n^{\sigma}
\mbb{P}(L_{n}\geq \varepsilon_{n})=\Upsilon(\sigma,\sigma_{0})+
\tau aI(\sigma,\sigma).
\end{align}
\end{theorem}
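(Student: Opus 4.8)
\quad The plan is to run, almost verbatim, the proof of Theorem~\ref{t1.2}, watching the single place where the borderline index $\alpha=1+\sigma$ matters. We start from the decomposition \eqref{5.0}. The significance of the critical index is that $\rho=0$, so that $\varepsilon_n n^{\rho}=\varepsilon_n\to0$ automatically, while $\mbb{E}(X_1^+)^{1+\sigma}$ is only \emph{logarithmically} infinite; this is exactly what replaces the pure power law of Theorem~\ref{t1.2}(a)--(b) by a power $\times\log$. The two competing mechanisms are unchanged. There is a \emph{Gaussian window}, coming from $k$ of order $\varepsilon_n^{-2}$, where the central limit theorem gives $\mbb{P}(S_k\ge\varepsilon_n k)\sim\bar\Phi(\varepsilon_n\sqrt{k}/\sigma_0)$ ($\bar\Phi$ the standard normal upper tail); inserting the local estimate $\mbb{P}(Z_n=k)\sim k^{\sigma-1}/(\Gamma(\sigma)\gamma^{\sigma}n^{\sigma})$ from Lemmas~\ref{l2.3}--\ref{l2.5} (legitimate since $k=o(n)$ here) and passing to a Riemann integral, the standard Gamma-integral identity shows that $\varepsilon_n^{2\sigma}n^{\sigma}$ times this window converges to $\Upsilon(\sigma,\sigma_0)$. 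And there is a \emph{big-jump part}, coming from $\varepsilon_n^{-2}\lesssim k\lesssim n$, where $\mbb{P}(S_k\ge\varepsilon_n k)\sim k\,\mbb{P}(X_1\ge\varepsilon_n k)\sim a(\varepsilon_n k)^{-\alpha}k$; with $\alpha=1+\sigma$ the same local estimate turns this into
\begin{align*}
\frac{a}{\Gamma(\sigma)\gamma^{\sigma}\varepsilon_n^{1+\sigma}n^{\sigma}}\sum_{k}\frac{e^{-k/(\gamma n)}}{k}\;\sim\;\frac{a\,I(\sigma,\sigma)\,\log n}{\varepsilon_n^{1+\sigma}n^{\sigma}},
\end{align*}
where $I(\sigma,\sigma)=\Gamma(\sigma)^{-1}\gamma^{-\sigma}$ is the $r=\sigma$ value of Theorem~\ref{t1.1}. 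The quotient of the two contributions is a constant multiple of $\varepsilon_n^{\sigma-1}\log n$, precisely the dichotomy parameter in (a), (b), (c).

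For the bookkeeping I would split \eqref{5.0} over $k$ into four conceptual ranges — the lower tail $k\le\delta\varepsilon_n^{-2}$, the Gaussian window $\delta\varepsilon_n^{-2}<k\le M\varepsilon_n^{-2}$, the big-jump window $M\varepsilon_n^{-2}<k\le\delta n$, and the upper tail $k>\delta n$ ($\delta\in(0,1)$ small, $M\ge1$ large) — as in the Remark following Theorem~\ref{t1.2}; in part~(b) the first three are merged into the single block $k\le\delta n$ and estimated together, since there only the big-jump window carries weight. On the lower tail, $\mbb{P}(S_k\ge\varepsilon_n k)=O(1)$ and $\mbb{P}(0<Z_n\le\delta\varepsilon_n^{-2})\le C\delta^{\sigma}\varepsilon_n^{-2\sigma}n^{-\sigma}$ by the local estimates, so this range is $O(\delta^{\sigma})$ relative to $\Upsilon(\sigma,\sigma_0)$, hence harmless as $\delta\to0$. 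On the upper tail, one uses $\mbb{P}(Z_n=k)\sim n^{-1}g(k/n)$ (with $g(y)=y^{\sigma-1}e^{-y/\gamma}/(\Gamma(\sigma)\gamma^{\sigma})$ the $\mathrm{Gamma}(\sigma,\gamma)$ density) together with a Fuk--Nagaev bound $\mbb{P}(S_k\ge\varepsilon_n k)\le e^{-c\varepsilon_n^{2}k}+Cak(\varepsilon_n k)^{-\alpha}$: the exponential term is killed by $n\varepsilon_n^{2}\to\infty$, and the big-jump term is $O(\varepsilon_n^{-(1+\sigma)}n^{-\sigma})$ with a $\delta$-dependent constant, hence negligible after normalisation. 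On the Gaussian window, $\varepsilon_n k=o(k^{2/3})$, so the CLT holds uniformly and a Riemann-sum passage plus the Gamma integral give that $\varepsilon_n^{2\sigma}n^{\sigma}$ times this range tends to $\Upsilon(\sigma,\sigma_0)$ as $n\to\infty$, then $M\to\infty$, $\delta\to0$. On the big-jump window, splitting $\mbb{P}(S_k\ge\varepsilon_n k)$ by Fuk--Nagaev, the exponential piece gives an incomplete-Gamma tail $\to0$ with $M$, while the big-jump piece equals $\frac{a}{\Gamma(\sigma)\gamma^{\sigma}\varepsilon_n^{1+\sigma}n^{\sigma}}\sum_k k^{-1}e^{-k/(\gamma n)}$ over the range, which is $\frac{a I(\sigma,\sigma)}{\varepsilon_n^{1+\sigma}n^{\sigma}}\log(n\varepsilon_n^{2})(1+o(1))$. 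Assembling the four pieces: in~(a), $\varepsilon_n^{\sigma-1}\log(n\varepsilon_n^{2})\le\varepsilon_n^{\sigma-1}\log n\to0$, so only $\Upsilon(\sigma,\sigma_0)$ survives after normalising by $\varepsilon_n^{2\sigma}n^{\sigma}$; in~(b), the hypothesis forces $\log(1/\varepsilon_n)=o(\log n)$, so $\log(n\varepsilon_n^{2})\sim\log n$ and normalising by $\varepsilon_n^{1+\sigma}n^{\sigma}/\log n$ leaves $a I(\sigma,\sigma)$; in~(c), $\varepsilon_n^{\sigma-1}\log(n\varepsilon_n^{2})\to\tau$ (because $\varepsilon_n^{\sigma-1}\log\varepsilon_n\to0$), leaving $\Upsilon(\sigma,\sigma_0)+\tau a I(\sigma,\sigma)$. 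Matching lower bounds are produced windowwise by the usual single-big-jump construction and the CLT lower bound.

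The main obstacle is the interface between the two regimes. The asymptotic $\mbb{P}(S_k\ge\varepsilon_n k)\sim a(\varepsilon_n k)^{-\alpha}k$ is valid only once $\varepsilon_n k\gg\sqrt{k\log k}$, i.e. for $k$ past a threshold of order $\varepsilon_n^{-2}\log(1/\varepsilon_n)$, below which the Gaussian tail $\exp(-\varepsilon_n^{2}k/(2\sigma_0^{2}))$ takes over; one must check that the transition zone between $M\varepsilon_n^{-2}$ and this threshold contributes negligibly — a Fuk--Nagaev bound suffices for the upper estimate in all three cases — and, the crucial point in part~(b), that truncating $\sum_k k^{-1}e^{-k/(\gamma n)}$ below the threshold and above $\delta n$ still leaves $(1+o(1))\log n$. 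This works precisely because in case~(b) the hypothesis $\varepsilon_n^{\sigma-1}\log n\to\infty$ forces $\varepsilon_n$ to decay more slowly than every power of $n$, so that the threshold is logarithmically invisible next to $n$. The two explicit constants are then routine: $\Upsilon(\sigma,\sigma_0)$ comes from $\frac{2}{\Gamma(\sigma)\gamma^{\sigma}}\int_0^{\infty}v^{2\sigma-1}\bar\Phi(v)\,dv$ by one integration by parts and the substitution $w=v^{2}/2$, and $a I(\sigma,\sigma)=a/(\Gamma(\sigma)\gamma^{\sigma})$ is read off Theorem~\ref{t1.1} at $r=\sigma$.
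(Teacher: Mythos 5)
Your proposal is correct and reaches the right constants, but the route differs from the paper's in one structural choice: you keep the $\delta n$ threshold for the big-jump/upper-tail split as in Theorem~\ref{t1.2}(c), whereas the paper replaces it with $(\log n)^{3/(\sigma-1)}$. The paper's choice is designed so that the modified Lemma~\ref{l2.8} (with $k\ge(\log n)^x$ and the hypothesis $\varepsilon_n\ge(\log n)^{-\varrho}$, $\varrho<x/2$) handles the entire range $[(\log n)^x,\infty)$ in one stroke: with $2\varrho<x$ the big-jump asymptotic $\mbb{P}(S_k\ge\varepsilon_n k)\sim ak(\varepsilon_n k)^{-\alpha}$ is uniformly valid past $(\log n)^x$, and $\sum_{k\ge(\log n)^x}k^{-\sigma}\mbb{P}(Z_n=k)\sim I(\sigma,\sigma)\log n/n^{\sigma}$ because $x\log\log n=o(\log n)$. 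Your $M\varepsilon_n^{-2}$ threshold sits below the onset of the big-jump regime, which forces the extra transition-zone argument (Fuk--Nagaev for the upper bound, restricted range for the lower), and the $k>\delta n$ tail must then be killed separately; you identify both issues and handle them correctly, and your observation that $\log(n\varepsilon_n^2)\sim\log n$ in cases (b), (c) (since the hypothesis forces $\log(1/\varepsilon_n)=o(\log n)$) is exactly the point that makes the two normalisations match. So the two proofs use the same CLT-window/big-jump dichotomy and the same ingredients (Lemmas~\ref{l2.3}--\ref{l2.5}, Theorem~\ref{t1.1} at $r=\sigma$, the Borovkov equivalence \eqref{2.21o}), but the paper's threshold placement packages the big-jump plus upper-tail ranges into a single modified lemma and avoids the transition zone, while yours is a bit more laborious but equally sound.
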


  \begin{remark}
For the critical value of $\sigma$, we can see that parts (a) and (c) are similar to those of Theorem \ref{t1.2}, except that the different  scaling of $\varepsilon_n$. In part (b), both the scaling of $\varepsilon_n$ and the converging rate has factor $\log n$, which should be compared with \cite[Corollary 7]{F07} for the Schr\"{o}der case of supercritical branching process, where there is no $\log$ term.
\end{remark}

In the following, we present the case that $\varepsilon_n(\equiv \varepsilon)$ is not varying with $n$, but a fixed positive number.

\begin{corollary}\label{c2}
Suppose  condition (A) is satisfied  and $\sigma>1$.\\
(a)~If
 $X_{1}^{+}$ has a tail of index $\alpha\in(1+\sigma, +\infty)$,
then for any $\varepsilon>0$, there exists $q(\varepsilon)>0$ such that
\begin{align}\label{t1.3a}
\lim_{n\rightarrow\infty}n^{\sigma}
\mbb{P}(L_{n}\geq \varepsilon)=q(\varepsilon)<\infty.
\end{align}
(b)~If $X_{1}^{+}$ has a tail of index $\alpha=1+\sigma$, then
\begin{align}\label{t1.3b}
\lim_{n\rightarrow\infty}\frac{n^{\sigma}}{\log n}
\mbb{P}(L_{n}\geq \varepsilon)=\varepsilon^{-(\sigma+1)}aI( \sigma ,\sigma)<\infty.
\end{align}
(c)~If $X_{1}^{+}$ has a tail of index $\alpha\in (2,1+\sigma)$, then
\begin{align}\label{t1.3c}
\lim_{n\rightarrow\infty}n^{\alpha-1}
\mbb{P}(L_{n}\geq \varepsilon)=\varepsilon^{-\alpha}aI( \alpha-1,\sigma)<\infty.
\end{align}
\end{corollary}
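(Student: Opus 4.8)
The plan is to start from the total probability decomposition \eqref{5.0}, which remains valid with $\varepsilon_n\equiv\varepsilon$: $\mbb{P}(L_n\geq\varepsilon)=\sum_{k\geq1}\mbb{P}(Z_n=k)\,\mbb{P}(S_k\geq\varepsilon k)$. Since $\varepsilon$ no longer decays, the thresholds $\delta/\varepsilon_n^{2}$ and $M/\varepsilon_n^{2}$ of the decomposition in the Remark collapse to bounded ranges of $k$, so the only split one needs is $\{k\le K\}$ versus $\{k>K\}$ for a large fixed $K$. Two ingredients go in: (i) the single-big-jump large deviation estimate $\mbb{P}(S_k\geq\varepsilon k)\sim a\,\varepsilon^{-\alpha}k^{1-\alpha}$ as $k\to\infty$, a consequence of \eqref{1.3}, \eqref{1.6} and $\alpha>2$, which also gives a uniform bound $\mbb{P}(S_k\geq\varepsilon k)\le C\,k^{1-\alpha}$ for all $k\geq1$; and (ii) Theorem~\ref{t1.1} applied with $r=\alpha-1$, together with the local probability estimates of Lemmas~\ref{l2.3}--\ref{l2.5}, which provide the uniform bound $\mbb{P}(Z_n=k)\le C\,k^{\sigma-1}n^{-\sigma}$ (hence $\mbb{P}(Z_n\le K)\le C\,K^{\sigma}n^{-\sigma}$) and, for each fixed $k$, the convergence $n^{\sigma}\mbb{P}(Z_n=k)\to u_k$, where $U(x)=\sum_{k\geq0}u_kx^k$ is the function of Lemma~\ref{l2.1}.

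For part (a), where $\alpha>1+\sigma$, I would argue directly in the series: $n^{\sigma}\mbb{P}(L_n\geq\varepsilon)=\sum_{k\geq1}\bigl(n^{\sigma}\mbb{P}(Z_n=k)\bigr)\,\mbb{P}(S_k\geq\varepsilon k)$, where the $k$-th term tends to $u_k\mbb{P}(S_k\geq\varepsilon k)$ and is dominated, uniformly in $n$, by $g_k:=C\,k^{\sigma-1}\mbb{P}(S_k\geq\varepsilon k)$ with $\sum_k g_k<\infty$, because $g_k\le CC'\,k^{\sigma-\alpha}$ for large $k$ and $\alpha>1+\sigma$ forces $\sigma-\alpha<-1$. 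Dominated convergence for series then yields $n^{\sigma}\mbb{P}(L_n\geq\varepsilon)\to q(\varepsilon):=\sum_{k\geq1}u_k\mbb{P}(S_k\geq\varepsilon k)<\infty$; positivity follows since $u_k>0$ for $k$ large and $\mbb{P}(S_k\geq\varepsilon k)>0$. A Tauberian argument from $U(x)\sim\gamma^{-\sigma}(1-x)^{-\sigma}$ as $x\uparrow1$, i.e.\ $u_k\sim\gamma^{-\sigma}k^{\sigma-1}/\Gamma(\sigma)$, shows moreover that $\sum_k u_k\mbb{P}(S_k\geq\varepsilon k)$ converges exactly when $\alpha>1+\sigma$, which explains the precise range in (a).

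For parts (b) and (c) the mass of the sum sits at $k$ of order $n$, so instead of termwise convergence I would transfer the tail of the series to a harmonic moment. Put $r=\alpha-1$, so $r=\sigma$ in (b) and $0<r<\sigma$ in (c), and let $A(n,r)$ be as in \eqref{1.2.1}; thus $A(n,r)=n^{\sigma}/\log n$ in (b), $A(n,r)=n^{\alpha-1}$ in (c), and $A(n,r)=o(n^{\sigma})$ in both. Fixing $\eta>0$, choose $K$ with $(1-\eta)a\varepsilon^{-\alpha}k^{1-\alpha}\le\mbb{P}(S_k\geq\varepsilon k)\le(1+\eta)a\varepsilon^{-\alpha}k^{1-\alpha}$ for all $k>K$. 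On $\{k\le K\}$, using $\mbb{P}(S_k\geq\varepsilon k)\le1$, $\mbb{P}(Z_n\le K)\le CK^{\sigma}n^{-\sigma}$ and $A(n,r)=o(n^{\sigma})$ gives $A(n,r)\sum_{k\le K}\mbb{P}(Z_n=k)\mbb{P}(S_k\geq\varepsilon k)\to0$. On $\{k>K\}$,
\begin{align*}
A(n,r)\sum_{k>K}\mbb{P}(Z_n=k)\,\mbb{P}(S_k\geq\varepsilon k)=\bigl(1+O(\eta)\bigr)\,a\varepsilon^{-\alpha}\,A(n,r)\,\mbb{E}\bigl[Z_n^{-r}\ind_{\{Z_n>K\}}\bigr],
\end{align*}
and since $A(n,r)\,\mbb{E}[Z_n^{-r}\ind_{\{1\le Z_n\le K\}}]\le A(n,r)\mbb{P}(Z_n\le K)\to0$, Theorem~\ref{t1.1} gives $A(n,r)\,\mbb{E}[Z_n^{-r}\ind_{\{Z_n>K\}}]=A(n,r)J_n(r)\mbb{P}(Z_n>0)+o(1)\to I(r,\sigma)$. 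Letting $n\to\infty$ and then $\eta\to0$ produces $\lim_n A(n,r)\mbb{P}(L_n\geq\varepsilon)=a\varepsilon^{-\alpha}I(\alpha-1,\sigma)$, which is \eqref{t1.3c} when $r<\sigma$ and, since then $\alpha-1=\sigma$ and $A(n,\sigma)=n^{\sigma}/\log n$, exactly \eqref{t1.3b}.

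I expect the main obstacle to be the exchange of the limit $n\to\infty$ with the sum over $k$. In (a) this is the uniform-in-$n$ domination: one needs both the local estimate $\mbb{P}(Z_n=k)\le Ck^{\sigma-1}n^{-\sigma}$ and the sharp bound $\mbb{P}(S_k\geq\varepsilon k)\le C'k^{1-\alpha}$, and it is precisely here that $\alpha>1+\sigma$ enters. In (b) and (c) the delicate points are showing the $\{k\le K\}$ block is negligible uniformly in $n$ (via $\mbb{P}(Z_n\le K)=O(n^{-\sigma})$ and $A(n,r)=o(n^{\sigma})$) and matching the $\{k>K\}$ block to Theorem~\ref{t1.1} --- the spot where the logarithmic correction in $A(n,\sigma)$ produces the $\log n$ in part (b). Finally, the large deviation input (i) must be invoked in a form uniform over the whole range $k>K$, not merely along $k\sim n$.
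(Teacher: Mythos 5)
Parts (b) and (c) of your argument are sound and in fact a bit cleaner than the paper's: you split the sum at a fixed $K$ and transfer the tail directly to the truncated harmonic moment $A(n,r)\mbb{E}[Z_n^{-r};Z_n>K]$, whose limit falls out of Theorem~\ref{t1.1} once the $\{Z_n\le K\}$ block is killed by $\mbb{P}(1\le Z_n\le K)=O(n^{-\sigma})$ together with $A(n,r)=o(n^\sigma)$; the paper instead splits at $\log n$ and re-derives variants of Lemmas~\ref{l2.8} and \ref{l2.12} for the constant-$\varepsilon$ setting. Either route works, and yours avoids re-proving an analogue of Lemma~\ref{l2.8}.

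Part (a), however, has a genuine gap: your dominating function is not justified. You claim that Lemmas~\ref{l2.3}--\ref{l2.5} give the uniform-in-$n$ bound $\mbb{P}(Z_n=k)\le Ck^{\sigma-1}n^{-\sigma}$ for all $k\ge 1$, and then dominate each term by $g_k=Ck^{\sigma-1}\mbb{P}(S_k\ge\varepsilon k)$. But the lemmas do not yield this for small $k$: Lemma~\ref{l2.5} only gives $\mbb{P}(Z_n=k)\le c_0k^{\sigma+\lambda}n^{-\sigma}$ on $1\le k\le a_n$ with some $\lambda>0$, which is strictly \emph{weaker} than $k^{\sigma-1}n^{-\sigma}$ there, while Lemma~\ref{l2.3} only covers $k\ge k_n\to\infty$. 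If one tries to patch this with a split at $a_n$, the intermediate block $\sum_{K<k\le a_n}k^{\sigma+\lambda}\cdot k^{1-\alpha}$ is summable only when $\lambda<\alpha-\sigma-2$, which is impossible for any $\lambda>0$ once $\alpha\in(1+\sigma,\,2+\sigma]$; so your fixed majorant cannot cover the full range $\alpha>1+\sigma$ claimed in part (a). The paper avoids this by using a majorant that varies with $n$: set $b_{nk}:=n^\sigma\mbb{P}(Z_n=k)k^{1-\alpha}$, so that $a_{nk}\le c\,b_{nk}$ by Fuk--Nagaev with $r=\alpha-1$, $b_{nk}\to\mu_kk^{1-\alpha}$ termwise, and $\sum_k b_{nk}=n^\sigma\mbb{E}[Z_n^{-(\alpha-1)};Z_n>0]\to I(\alpha-1,\sigma)=\sum_k\mu_kk^{1-\alpha}$ by Theorem~\ref{t1.1}; the generalized dominated convergence theorem with an $n$-dependent majorant (Kallenberg) then gives $\sum_k a_{nk}\to\sum_k\mu_k\mbb{P}(S_k\ge\varepsilon k)=q(\varepsilon)$. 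Your fixed $g_k$ should be replaced by this $b_{nk}$.
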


The remainder of the paper is organized as follows. In Section 2, we give some preliminary lemmas and their proofs. Section 3 is devoted to the proofs of the main theorems. As usual, we write $a_{n}\sim b_{n}$, if and only if
\begin{align*}
\lim_{n\rightarrow \infty}\frac{a_{n}}{b_{n}}=1;
\end{align*}
Write $a_{n}=o(b_{n})$ if and only if
\begin{align*}
\lim_{n\rightarrow \infty}\frac{a_{n}}{b_{n}}=0,
\end{align*}
where $\{a_{n},n\geq1\}$ and $\{b_{n},n\geq1\}$ are two sequences of positive numbers.

In the following $\mathbb{N}=\{1,2,3, \cdots\}$, $\mathbb{Z}=\{0,1,2,3, \cdots\}$, and $c, c_{0},c_{1},\cdots$ are positive constants whose value may vary from place to place.

\section{Preliminary results}
\begin{lemma}\label{l2.1} (\cite[Theorem 1]{P72})
Under condition (A), we have
\begin{align}\label{2.1}
\lim\limits_{n\rightarrow \infty}n^{\sigma}H_{n}(x)=U(x),\quad  |x|<1
\end{align}
where $U(x)$ satisfies the functional equation
\begin{align*}
h(x)U(f(x))=U(x).
\end{align*}
The above convergence is uniform over compact subsets of the open unit disc. Denoting the power series representation of $U(x)$ by $\sum_{j=0}^\infty \mu_jx^j$, then
\begin{align}\label{2.2}
\lim_{n\rightarrow\infty} n^{\sigma}\mbb{P}\{Z_{n}=j\}=\mu_{j},\quad j\geq 0.
\end{align}
\end{lemma}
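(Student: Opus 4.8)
The proof rests on the product representation \eqref{1.2}. Writing $u_k(x)=1-f_k(x)$ for $|x|<1$, take logarithms: $\log H_n(x)=\sum_{k=0}^{n-1}\log h\bigl(1-u_k(x)\bigr)$. Since $h$ is a probability generating function with $h'(1)=\beta<\infty$ and $h''(1)=\sum_j h_j j(j-1)<\infty$ (the latter by $\sum_j h_j j^2<\infty$ in (A)), a second-order Taylor expansion at $1$ gives $\log h(1-u)=-\beta u+O(u^2)$ as $u\to 0$. Hence the plan is: (i) control the iterates $f_k(x)$, i.e.\ the decay of $u_k(x)$; (ii) sum the resulting series; (iii) pass to the limit; (iv) read off the functional equation and the local estimate \eqref{2.2}.

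For step (i), recall that in the critical case $f'(1)=m=1$ and $\gamma=\tfrac12 f''(1)\in(0,\infty)$, so $f(1-u)=1-u+\gamma u^2+o(u^2)$ and therefore the iterates obey $u_{k+1}(x)=u_k(x)-\gamma u_k(x)^2+o(u_k(x)^2)$, which forces $1/u_{k+1}(x)-1/u_k(x)\to\gamma$ and hence $u_k(x)\sim 1/(\gamma k)$. The role of the remaining hypothesis $\sum_j p_j j^2\log j<\infty$ is precisely to sharpen this into a Kesten--Ney--Spitzer type estimate $1/u_k(x)=\gamma k+O(\log k)$, equivalently $u_k(x)-\tfrac{1}{\gamma k}=O(k^{-2}\log k)$, so that $\sum_k\bigl(u_k(x)-\tfrac{1}{\gamma k}\bigr)$ converges. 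All of this must be made uniform on compact subsets $K\subset\{|x|<1\}$; for complex $x$ this is done by dominating $|u_k(x)|\le u_k(|x|)$, noting that $f$ maps the open disc into itself ($|f(x)|\le f(|x|)<1$), and using analyticity of $f$ and $h$ near $1$ so that the expansions above hold with uniform error constants once $k$ is large, the finitely many small-$k$ terms being harmless.

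Granting (i), step (ii) gives $\sum_{k=0}^{n-1}\beta u_k(x)=\sigma\log n+c(x)+o(1)$ with $c(x)$ finite (since $\sigma=\beta/\gamma$), while $\sum_k O(|u_k(x)|^2)=\sum_k O(k^{-2})$ converges; thus $\log H_n(x)+\sigma\log n$ converges, uniformly on each $K$, to a finite limit. Exponentiating yields \eqref{2.1} with $U(x)$ analytic on $\{|x|<1\}$ and $U(x)>0$ for $x\in[0,1)$. For the functional equation, peel off the first factor of \eqref{1.2} to get $H_{n+1}(x)=h(x)H_n(f(x))$, multiply by $n^\sigma$, and let $n\to\infty$; since $\bigl((n+1)/n\bigr)^\sigma\to 1$ and $f(K)$ is again a compact subset of the disc, the left side tends to $U(x)$ and the right side to $h(x)U(f(x))$. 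Finally \eqref{2.2} follows from \eqref{2.1} by extracting Taylor coefficients: for fixed $r\in(0,1)$, $n^\sigma\mbb{P}\{Z_n=j\}=\dfrac{n^\sigma}{2\pi i}\oint_{|x|=r}\dfrac{H_n(x)}{x^{j+1}}\,dx\longrightarrow\dfrac{1}{2\pi i}\oint_{|x|=r}\dfrac{U(x)}{x^{j+1}}\,dx=\mu_j$, the interchange of limit and integral being justified by uniform convergence on $|x|=r$.

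The main obstacle is step (i): establishing the refined iteration estimate $1/u_k(x)=\gamma k+O(\log k)$ — this is exactly where the $j^2\log j$ moment condition enters, and without it one obtains only that $n^\sigma H_n(x)$ is bounded away from $0$ and $\infty$, not that it converges — together with carrying the required uniformity through to complex $x$ in compact subsets of the unit disc. Everything else is routine Taylor expansion, summation, and Cauchy's coefficient formula.
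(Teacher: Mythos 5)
The paper does not prove this lemma; it is quoted directly from Pakes \cite[Theorem 1]{P72}, so there is no internal proof to compare against. Your proposal supplies one in the spirit of Pakes's original argument: take logarithms of the product formula \eqref{1.2}, Taylor-expand $\log h$ near $1$, invoke Kesten--Ney--Spitzer asymptotics for the iterates $u_k=1-f_k$, and extract \eqref{2.2} by Cauchy's coefficient formula. The real-variable backbone is correct, the role you assign to $\sum_j p_jj^2\log j<\infty$ (sharpening $u_k\sim 1/(\gamma k)$ to $1/u_k=\gamma k+O(\log k)$ so that $\sum_k(u_k-\tfrac{1}{\gamma k})$ converges) is exactly right, and both the peeling argument for the functional equation and the contour-integral derivation of \eqref{2.2} are fine \emph{given} uniform convergence on compacts.

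The gap is in how you obtain that uniformity. You dominate via ``$|u_k(x)|\le u_k(|x|)$'', i.e.\ $|1-f_k(x)|\le 1-f_k(|x|)$, and this inequality is false --- in fact the \emph{reverse} holds. Since $f_k$ has nonnegative Taylor coefficients, $|f_k(x)|\le f_k(|x|)$, hence $1-f_k(|x|)\le 1-|f_k(x)|\le |1-f_k(x)|$, with strict inequality in general (take $x=-r$, $r\in(0,1)$: the odd-degree terms of $1-f_k(-r)=\sum_j p_{k,j}\bigl(1-(-r)^j\bigr)$ contribute $1+r^j$, which exceeds $1-r^j$). So the termwise domination on which your uniformity claim rests does not go through, and without it the coefficient extraction for \eqref{2.2} is unjustified. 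The clean repair is to drop termwise complex estimates and use normal families instead: your real argument gives $n^\sigma H_n(x)\to U(x)$ for all $x\in[0,1)$; for $|x|\le r<1$, nonnegativity of the Taylor coefficients of $H_n$ gives $n^\sigma|H_n(x)|\le n^\sigma H_n(|x|)\le n^\sigma H_n(r)$, and the right-hand side converges and hence is bounded in $n$, so $\{n^\sigma H_n\}$ is locally bounded on the disc. Vitali's convergence theorem (or Montel plus the identity theorem) then upgrades pointwise convergence on $[0,1)$ to uniform convergence on compact subsets of the disc to an analytic $U$, after which the rest of your argument stands.
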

\begin{lemma}\label{l2.2}(\cite[Proposition 4.1, Remark 4.2]{18})
Suppose condition (A) is satisfied. Then, for every $c>0$ there exists constants $c_{1}>0,~c_{2}>0$ such that for each $n\geq 1$ and $0<s\leq cn$,
\begin{align}\label{2.3}
c_{1}(1+\gamma s)^{-\sigma}\leq H_{n}(e^{-\frac{s}{n}})\leq c_{2}(1+\gamma s)^{-\sigma}.
\end{align}
Particularly, if $C(n)>0$ and $C(n)/n\rightarrow0$, as $n\rightarrow\infty$, then for $0<s\leq C(n)$, we have
\begin{align}\label{2.3.1}
H_{n}(e^{-\frac{s}{n}})=(1+\gamma s)^{-\sigma}(1+h(n,s)),
\end{align}
where $h$ is a function on $\mathbb{N}\times [0,\infty)$ such that
\begin{align*}
\lim\limits_{n\rightarrow\infty}\sup\limits_{0< s\leq C(n)}|h(n,s)|=0.
\end{align*}
\end{lemma}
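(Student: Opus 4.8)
The plan is to reduce the statement to sharp control of the iterates $f_k$ of the critical offspring generating function and then assemble it from the product formula \eqref{1.2} using the identity $\beta=\sigma\gamma$. Write $v:=1-e^{-s/n}$; since $\tfrac{1-e^{-c}}{c}\,\tfrac sn\le v\le\tfrac sn$ for $0<s\le cn$, we have $v\asymp s/n$, $nv\asymp s$ and $1+\gamma nv\asymp 1+\gamma s$ uniformly (constants depending only on $c$), so it is enough to prove both assertions with $1+\gamma s$ replaced by $1+\gamma nv$. Put $u_k:=1-f_k(e^{-s/n})$, so $u_0=v$, $u_{k+1}=1-f(1-u_k)$, and, by \eqref{1.2} and $h\ge h_0>0$, $-\log H_n(e^{-s/n})=\sum_{k=0}^{n-1}\bigl(-\log h(1-u_k)\bigr)$. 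From $\sum_j h_jj^2<\infty$, $h$ is $C^2$ on $[0,1]$ with $h(1-u)=1-\beta u+O(u^2)$, so $-\log h(1-u)=\beta u+R(u)$ with $R\in C^2$, $R(0)=R'(0)=0$, hence $|R(u)|\le Cu^2$ on $[0,1]$. From $f'(1)=1$, $\tfrac12f''(1)=\gamma$ and $0\le f''\le 2\gamma$ on $[0,1]$ one gets $f(1-u)-(1-u)=u^2(\gamma+\delta(u))$ with $-\gamma\le\delta\le0$ and $\delta(u)\to0$ as $u\to0^+$, so $u_{k+1}^{-1}-u_k^{-1}=\psi(u_k)$ with $\psi(u)=\tfrac{\gamma+\delta(u)}{1-u(\gamma+\delta(u))}$, which extends continuously and strictly positively to $[0,1]$ (value $\gamma$ at $0$), whence $0<\gamma_-\le\psi\le\gamma_+<\infty$ there. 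Summing, $v^{-1}+\gamma_-k\le u_k^{-1}\le v^{-1}+\gamma_+k$, i.e. $u_k\asymp\tfrac{v}{1+\gamma kv}$ uniformly in $k\ge0$, $n\ge1$, $v\in(0,1-e^{-c}]$; in particular $\sum_{k=0}^{n-1}u_k^2\le v^2+\int_0^\infty\tfrac{v^2\,dt}{(1+\gamma_-tv)^2}=O(v)=O(1)$.

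Consequently $-\log H_n(e^{-s/n})=\beta\sum_{k=0}^{n-1}u_k+O\!\bigl(\sum_{k=0}^{n-1}u_k^2\bigr)=\beta\sum_{k=0}^{n-1}u_k+O(1)$, so everything reduces to the uniform estimate
\[
\sum_{k=0}^{n-1}u_k=\frac1\gamma\log(1+\gamma nv)+O(1),
\]
for then $-\log H_n(e^{-s/n})=\tfrac\beta\gamma\log(1+\gamma nv)+O(1)=\sigma\log(1+\gamma s)+O(1)$ \emph{because} $\beta=\sigma\gamma$, which after exponentiation is \eqref{2.3} with explicit $c_1,c_2$ (the finitely many bounded $n$ being immediate, as $H_n$ is increasing and positive on $[0,1)$). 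To prove the displayed identity, note $\sum_{k=0}^{n-1}(v^{-1}+\gamma k)^{-1}=\tfrac1\gamma\log(1+\gamma nv)+O(1)$ by comparison with $\int_0^n(v^{-1}+\gamma t)^{-1}dt$, and write $u_k^{-1}=v^{-1}+\gamma k+E_k$ with $E_k:=\sum_{j<k}(\psi(u_j)-\gamma)$; since $\psi(u)=\gamma+\delta(u)+O(u)$ for small $u$ (the finitely many larger $u_j$ contributing $O(1)$) and $\sum_{j<k}u_j=O(\log k)$, we have $E_k=\sum_{j<k}\delta(u_j)+O(\log k)$, hence $|E_k|\le\Delta_k+O(\log k)$ with $\Delta_k:=\int_{1/(\gamma k)}^{1-e^{-c}}u^{-2}|\delta(u)|\,du$ (using $u_j\asymp\tfrac{v}{1+\gamma jv}$ and the consequent comparison of $\sum_{j<k}|\delta(u_j)|$ with the integral $\Delta_k$). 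It therefore suffices that $\sum_{k\ge1}|E_k|\,(v^{-1}+\gamma k)^{-2}=O(1)$ uniformly: for $k\le(\gamma v)^{-1}$ this holds already by the trivial bound $|E_k|=O(k)$ (and $|\delta|\le 2\gamma$), and for $k>(\gamma v)^{-1}$ it reduces to $\sum_{k\ge1}|E_k|\,k^{-2}<\infty$.

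The convergence $\sum_{k\ge1}|E_k|\,k^{-2}<\infty$ is the technical heart of the argument, and it is exactly here that hypothesis (A) is used. Indeed $\sum_{k\ge1}\Delta_k\,k^{-2}=\int_0^{1-e^{-c}}u^{-2}|\delta(u)|\bigl(\sum_{k>1/(\gamma u)}k^{-2}\bigr)du\asymp\int_0^{1-e^{-c}}u^{-1}|\delta(u)|\,du$; moreover $|\delta(u)|\le\tfrac12\sum_j j(j-1)p_j\bigl(1-(1-u)^{j-2}\bigr)$ (from $|\delta(u)|=\int_0^1(1-s)(2\gamma-f''(1-us))\,ds$ and $1-us\ge1-u$), and $\int_0^{1-e^{-c}}u^{-1}\bigl(1-(1-u)^{j-2}\bigr)\,du\lesssim 1+\log j$ (split at $u=1/j$), so
\[
\int_0^{1-e^{-c}}u^{-1}|\delta(u)|\,du\ \lesssim\ \sum_j j^2 p_j(1+\log j)\ =\ \sum_j j^2 p_j+\sum_j j^2 p_j\log j\ <\ \infty
\]
by (A). Together with $\sum_{k\ge1}(\log k)k^{-2}<\infty$ this gives $\sum_{k\ge1}|E_k|k^{-2}<\infty$ and completes the proof of \eqref{2.3}. (Equivalently, one is invoking the classical refinement of Kolmogorov's estimate $nQ_n\to\gamma^{-1}$ for the critical Galton--Watson process with offspring law $\{p_j\}$, which holds precisely under $\sum_j p_j j^2\log j<\infty$; without it one controls the exponent in \eqref{2.3} only up to an $o(1)$, which is incompatible with the clean power law.)

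For the refined statement \eqref{2.3.1}, assume $C(n)/n\to0$ and restrict to $0<s\le C(n)$, so $v\le s/n\le C(n)/n\to0$ \emph{uniformly}. Then every $u_k\le u_0=v\to0$, hence $\sup_k|\psi(u_k)-\gamma|\le\sup_{0<u\le v}|\psi(u)-\gamma|\to0$, $\sum_{k=0}^{n-1}u_k^2=O(v)\to0$, and $nv=s+O(s^2/n)=s(1+O(C(n)/n))$, so $\log(1+\gamma nv)-\log(1+\gamma s)\to0$; moreover in $\sum_{k>1/(\gamma v)}|E_k|(v^{-1}+\gamma k)^{-2}$ the outer cutoff $1/(\gamma v)\to\infty$, so this tail also $\to0$. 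Feeding these through the estimates above turns each $O(1)$ and $O(\cdot)$ error into an $o(1)$ uniform in $0<s\le C(n)$, giving $-\log H_n(e^{-s/n})=\sigma\log(1+\gamma s)+o(1)$ uniformly, i.e. \eqref{2.3.1} with $h(n,s)=e^{o(1)}-1$. The step I expect to be the main obstacle is precisely this uniform control of the accumulated corrections $E_k$ — squeezing the clean power law $(1+\gamma s)^{-\sigma}$ out of them, which is where the precise moment condition (A) is indispensable. A route that avoids the delicate Riemann-sum comparison altogether is a sub-/super-solution induction on $n$ based on $H_n(1-v)=h(1-v)H_{n-1}(1-\widetilde f(v))$, $\widetilde f(v)=1-f(1-v)$: one checks that $c_2(1+\gamma nv)^{-\sigma}$ (with a lower-order correction absorbing the $\delta$-terms, obtained by sandwiching $\widetilde f$ between the explicitly iterable maps $u\mapsto u/(1+cu)$) is a supersolution and a matching expression a subsolution, the induction closing thanks to the algebraic cancellation forced by $\beta=\sigma\gamma$.
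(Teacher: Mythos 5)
Your proposal is correct in substance, but it takes a genuinely different route from the paper. The paper does not reprove \eqref{2.3} at all (it is imported from \cite[Proposition 4.1]{18}), and it obtains \eqref{2.3.1} by a sandwiching device: choosing $m=m(n,s)$ with $f_{m}(0)\le e^{-s/n}\le f_{m+1}(0)$, squeezing $H_n(e^{-s/n})$ between the shifted products $\prod_{k=m}^{n+m-1}h(f_k(0))$ and $\prod_{k=m+1}^{n+m}h(f_k(0))$, and evaluating these via the refined rate $1-f_k(0)=(1+\nu_k)/(\gamma k)$ with $\sum_k|\nu_k|/k<\infty$ from the proof of \cite[Theorem 1]{P72} (this is where $\sum_j p_j j^2\log j<\infty$ enters), together with $h(x)=e^{-\beta(1-x)+O((1-x)^2)}$ and $\sigma=\beta/\gamma$. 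You instead analyze the recursion $u_{k+1}=1-f(1-u_k)$ started from the actual argument $v=1-e^{-s/n}$, write $u_k^{-1}=v^{-1}+\gamma k+E_k$, and prove weighted summability of the corrections $E_k$ directly, extracting the role of the $j^2\log j$ moment from $\int_0^1 u^{-1}|\delta(u)|\,du<\infty$; this makes both \eqref{2.3} and \eqref{2.3.1} self-contained (you in effect reprove what the paper cites from \cite{18} and \cite{P72}/\cite{Ke66}), at the cost of heavier bookkeeping. Two steps you compress are genuine but fixable: the comparison of $\sum_{j<k}|\delta(u_j)|$ with $\Delta_k$ uses both the monotonicity of $|\delta|$ (which does hold: $f''$ is nondecreasing, so $|\delta(u)|=\int_0^1(1-s)(2\gamma-f''(1-us))\,ds$ increases in $u$) and the lower bound $u_{k-1}\gtrsim 1/k$, which is available only in the regime $k\gtrsim 1/(\gamma v)$ where you actually invoke it; and in the proof of \eqref{2.3.1} the block $k\le 1/(\gamma v)$ is merely $O(1)$ under the crude bound $|E_k|=O(k)$, so there you really need, as you indicate, the improvement $|E_k|\le k\sup_{0<u\le v}|\psi(u)-\gamma|=k\,o(1)$. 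With these details written out your argument is complete: the paper's route is shorter because it outsources the hard estimate to the known refined asymptotics of $1-f_k(0)$, while yours is more elementary and yields \eqref{2.3} and \eqref{2.3.1} simultaneously from first principles.
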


\begin{proof}
\eqref{2.3} is from \cite[Proposition 4.1]{18}. Now we prove \eqref{2.3.1}. By Kesten et. al. \cite[Corollary 1]{Ke66}, as $k\rightarrow\infty$, $$1-f_{k}(0)\sim \frac{1}{\gamma k}, \quad  f_{k+1}(0)-f_{k}(0)\sim \frac{1}{\gamma k^2}. $$  Then
for any $0< s\leq C(n)$, we can define $\ m=m(n,s)$ by
\begin{equation}\label{msbound}
1-f_{m}(0)\geq 1-e^{-\frac{s }{ n}}\geq 1-f_{m+1}(0),
\end{equation}
which implies that
  \begin{equation}\label{msbound3}
\lim\limits_{n\rightarrow\infty}\sup\limits_{0< s\leq C(n)}\bigg|\frac{1-e^{-\frac{s }{ n}}}{1-f_{m}(0)}-1\bigg|=0,
\end{equation}
and there exists $c_3>0$ such that for any $0< s\leq C(n)$, \begin{equation}\label{mmm}
m=m(n,s)\geq  \frac{c_3n}{C(n)}.
\end{equation}
Clearly,
\begin{equation*}
H_{n}(e^{-\frac{s}{n}}) =\prod_{k=0}^{n-1}h(f_{k}(e^{-\frac{s }{ n}
})).
\end{equation*}
By (\ref{msbound}), we obtain that
\begin{equation}\label{Hn1}
(1+\gamma s)^{\sigma}\prod_{k=m}^{n+m-1}h(f_{k}(0))\le (1+\gamma s)^{\sigma}H_{n}(e^{-\frac{s}{n}})\leq
(1+\gamma s)^{\sigma}\prod_{k=m+1}^{n+m}h(f_{k}(0)).
\end{equation}
To get \eqref{2.3.1}, it suffices to prove
\begin{equation}
 \label{midd1}\lim\limits_{n\rightarrow\infty}\sup_{0< s\leq C(n)}\bigg|(1+\gamma s)^{\sigma}\prod_{k=m}^{n+m-1}h(f_{k}(0))-1\bigg|=0,\end{equation}
 and
\begin{equation} \label{midd2}\lim\limits_{n\rightarrow\infty}\sup_{0< s\leq C(n)}\bigg|(1+\gamma s)^{\sigma}\prod_{k=m+1}^{n+m}h(f_{k}(0))-1\bigg|=0.
\end{equation}
 Since the proofs of \eqref{midd1} and \eqref{midd2} are essentially similar, so we prove \eqref{midd1} only.
By the proof of  \cite[Theorem 1] {P72}, there exists a sequence of numbers $\{\nu_n\}$ such that
\begin{equation}\label{frate}
1-f_{n}(0)=\frac{1+\nu_{n}}{\gamma n}\quad \mbox{and}\quad \sum\limits_{n=1}^{\infty}\frac{|\nu_{n}|}{n}<\infty.
\end{equation}
By Taylor's expansion we get
\begin{equation*}
h(x)=e^{-\beta(1-x)+\varphi(x)}
\end{equation*}
as $x\to 1$, where $\varphi(x)\in[c_{4}(x-1)^{2},c_{5}(x-1)^{2}]$. Then, we have
\begin{eqnarray*}
(1+\gamma s)^{\sigma}\prod_{k=m}^{n+m}h(f_{k}(0)) &=&
(1+\gamma s)^{\sigma}\exp \left\{
-\beta \sum_{k=m}^{n+m}\left( 1-f_{k}(0)\right) \right\}\exp \left\{
\sum_{k=m}^{n+m}\varphi(f_{k}(0)) \right\}  \\
&=&(1+\gamma s)^{\sigma} \exp \left\{ -\frac{\beta}{\gamma}\sum_{k=m}^{n+m}\frac{1}{k}\right\}\exp \left\{ -\frac{\beta}{\gamma}\sum_{k=m}^{n+m}\frac{\nu_{k}}{k}\right\}
\exp \left\{
\sum_{k=m}^{n+m}\varphi(f_{k}(0)) \right\}\\
&= &(1+\gamma s)^{\sigma}\exp \left\{ -\sigma\bigg(\ln \frac{n+m}{m}+\varepsilon(n,m)\bigg)\right\}\\
& & \cdot
 \exp \left\{ -\sigma\sum_{k=m}^{n+m}\frac{\nu_{k}}{k}\right\}
\exp \left\{
\sum_{k=m}^{n+m}\varphi(f_{k}(0)) \right\}\\\\
&=& \left(\frac{1+\gamma s}{1+n/m}\right)^{\sigma} I_{0}(n,s),\\
\end{eqnarray*}
where
$$\varepsilon(n,m)=\sum\limits_{k=m}^{n+m}\frac{1}{k}-\ln \frac{n+m}{m},$$
and
\begin{eqnarray*}
I_{0}(n,s):=\exp \left\{ -\sigma\sum_{k=m}^{n+m}\frac{\nu_{k}}{k}\right\}
\exp \left\{
\sum_{k=m}^{n+m}\varphi(f_{k}(0)) \right\}\exp \left\{-\sigma\varepsilon(n,m)\right\}.
\end{eqnarray*}
From (\ref{mmm}) and (\ref{frate}), we have
\begin{eqnarray*}
\lim\limits_{n\rightarrow\infty}\sup_{0< s\leq C(n)}|I_{0}(n,s)-1|=0.
\end{eqnarray*}
Now, to get \eqref{midd1}, we only need to prove
\begin{equation}\label{unibound}
\lim\limits_{n\rightarrow\infty}\sup\limits_{0< s\leq C(n)}\Big|\frac{1+\gamma s}{1+n/m}-1\Big|=0.
\end{equation}
Noticing that
\begin{eqnarray*}
\Big|\frac{1+\gamma s}{1+n/m}-1\Big|=\frac{n}{n+m}\Big|\frac{\gamma ms}{n}-1\Big|\leq \Big|\frac{\gamma ms}{n}-1\Big|,
\end{eqnarray*}
it is sufficient to prove
\begin{equation}\label{unibound1}
\lim\limits_{n\rightarrow\infty}\sup\limits_{0< s\leq C(n)}\Big|\frac{\gamma ms}{n}-1\Big|=0.
\end{equation}
Observe that $$\frac{\gamma ms}{n}=\frac{s/n}{1-e^{-s/n}}\cdot\frac{1-e^{-s/n}}{1-f_{m}(0)}\cdot \gamma m(1-f_{m}(0)). $$
By \eqref{msbound3}, \eqref{mmm} and \eqref{frate}, as $n\to \infty$, each term on the right hand side of above equality converges to $1$,  uniformly in $s$. Hence \eqref{unibound1} holds, and then we obtain \eqref{unibound}. The proof is completed.\qed
\end{proof}
\begin{remark}
(\ref{2.3.1}) was mentioned in a lecture of Professor Vladimir Vatutin and he gave a sketch of the proof in a draft.
\end{remark}

Obviously,
\begin{align}\label{3.0}
J_{n}(r)
=\mbb{E}[Z_{n}^{-r}|Z_{n}>0]=&\frac{1}{\Gamma(r)\mbb{P}(Z_{n}>0)}
\int_{0}^{\infty}\mbb{E}(e^{-tZ_{n}};Z_{n}>0)t^{r-1}dt\\ \nonumber
=&\frac{1}{\Gamma(r)\mbb{P}(Z_{n}>0)}\Big(J_{n1}(r)+J_{n2}(r)\Big),
\end{align}
where
\begin{align*}
&J_{n1}(r):=\int_{0}^{1}\mbb{E}(e^{-tZ_{n}};Z_{n}>0)t^{r-1}dt,\\
&J_{n2}(r):=\int_{1}^{\infty}\mbb{E}(e^{-tZ_{n}};Z_{n}>0)t^{r-1}dt.
\end{align*}
Our next two lemmas give the detailed analysis of $J_{n1}(r)$ and $J_{n2}(r)$.

\begin{lemma}\label{l4.1}
Suppose condition (A) is satisfied.\\
(a) For $r>\sigma$,
\begin{align*}
\lim\limits_{n\rightarrow\infty}n^{\sigma}J_{n1}(r)
=\int_{0}^{1}(U(e^{-u})-U(0))u^{r-1}du<\infty.
\end{align*}
(b) For $r<\sigma$,
\begin{align*}
\lim\limits_{n\rightarrow\infty}n^{r}J_{n1}(r)=
\int_{0}^{\infty}(1+\gamma s)^{-\sigma}s^{r-1}ds<\infty.
\end{align*}
(c) For $r=\sigma$,
\begin{align*}
\lim\limits_{n\rightarrow\infty}\frac{n^{\sigma}}{\log n}J_{n1}(\sigma)=
\gamma^{-\sigma}<\infty.
\end{align*}
\end{lemma}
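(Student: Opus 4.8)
The plan is to start from the identity $\mathbb{E}(e^{-tZ_n};Z_n>0)=H_n(e^{-t})-H_n(0)$, so that
\begin{align*}
J_{n1}(r)=\int_0^1\bigl(H_n(e^{-t})-H_n(0)\bigr)t^{r-1}\,dt,\qquad
n^{r}J_{n1}(r)=\int_0^{n}\bigl(H_n(e^{-s/n})-H_n(0)\bigr)s^{r-1}\,ds,
\end{align*}
the second identity via the substitution $t=s/n$. In case (a) the mass of the integral sits at scale $t\asymp1$ and I would argue in the original variable; in cases (b) and (c) it sits at scale $t\asymp1/n$ and I would argue in the rescaled variable. Throughout, Lemma~\ref{l2.1} supplies $n^\sigma H_n(0)\to\mu_0=U(0)$ (in particular $H_n(0)\le cn^{-\sigma}$) and $n^\sigma H_n(x)\to U(x)$ for $|x|<1$, while Lemma~\ref{l2.2} supplies the two-sided bound \eqref{2.3} and, on any range $s\le C(n)$ with $C(n)/n\to0$, the sharp asymptotic \eqref{2.3.1}.

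For part (a), where $r>\sigma$, I would apply dominated convergence to $n^\sigma J_{n1}(r)=\int_0^1 n^\sigma\bigl(H_n(e^{-t})-H_n(0)\bigr)t^{r-1}\,dt$. By Lemma~\ref{l2.1} the integrand converges pointwise on $(0,1)$ to $(U(e^{-t})-U(0))t^{r-1}$. For domination, \eqref{2.3} with $s=nt\le n$ gives $n^\sigma H_n(e^{-t})\le c_2(n^{-1}+\gamma t)^{-\sigma}\le c_2(\gamma t)^{-\sigma}$, and $n^\sigma H_n(0)$ is bounded, so on $(0,1)$ the integrand is dominated by $c'\,t^{r-1-\sigma}$, which is integrable precisely because $r>\sigma$; dominated convergence then gives the asserted limit. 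Finiteness of the limiting integral follows in the same way: letting $n\to\infty$ in \eqref{2.3} with $s=nt$ yields $c_1(\gamma t)^{-\sigma}\le U(e^{-t})\le c_2(\gamma t)^{-\sigma}$, so $(U(e^{-t})-U(0))t^{r-1}\asymp t^{r-1-\sigma}$ near $t=0$.

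For part (b), where $r<\sigma$, I would apply dominated convergence to the rescaled integral $n^rJ_{n1}(r)=\int_0^\infty\bigl(H_n(e^{-s/n})-H_n(0)\bigr)s^{r-1}\mathbf{1}_{(0,n)}(s)\,ds$. For each fixed $s>0$, \eqref{2.3.1} applied with the constant cutoff $C(n)\equiv s$ gives $H_n(e^{-s/n})\to(1+\gamma s)^{-\sigma}$ and $H_n(0)\to0$, so the integrand converges pointwise to $(1+\gamma s)^{-\sigma}s^{r-1}$. For domination, \eqref{2.3} gives $H_n(e^{-s/n})\le c_2(1+\gamma s)^{-\sigma}$ on $0<s\le n$, contributing the fixed function $c_2(1+\gamma s)^{-\sigma}s^{r-1}$ (integrable at $\infty$ because $r<\sigma$), while $H_n(0)s^{r-1}\mathbf{1}_{(0,n)}(s)\le c\bigl(s^{r-1}\mathbf{1}_{(0,1)}(s)+s^{r-1-\sigma}\mathbf{1}_{[1,\infty)}(s)\bigr)$, using $n^{-\sigma}\le s^{-\sigma}$ when $1\le s\le n$; this bound is also integrable and independent of $n$. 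Dominated convergence then yields $n^rJ_{n1}(r)\to\int_0^\infty(1+\gamma s)^{-\sigma}s^{r-1}\,ds<\infty$.

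For part (c), the critical exponent $r=\sigma$, the rescaled integrand converges pointwise to $(1+\gamma s)^{-\sigma}s^{\sigma-1}$, which behaves like $\gamma^{-\sigma}s^{-1}$ at infinity and hence is not integrable; the factor $\log n$ is exactly what compensates this logarithmic divergence, and here the sharp form \eqref{2.3.1} (not merely the two-sided bound \eqref{2.3}) is needed to identify the constant. I would split $\int_0^n=\int_0^{C(n)}+\int_{C(n)}^{n}$ with $C(n)=n/\log n$, so that $C(n)/n\to0$ while $\log C(n)=\log n-\log\log n\sim\log n$. On $[0,C(n)]$, \eqref{2.3.1} gives $H_n(e^{-s/n})=(1+\gamma s)^{-\sigma}(1+h(n,s))$ with $\eta_n:=\sup_{s\le C(n)}|h(n,s)|\to0$, and since $(1+\gamma s)^{-\sigma}s^{\sigma-1}=\gamma^{-\sigma}s^{-1}+O(s^{-2})$ for $s\ge1$ one gets $\int_0^{C(n)}(1+\gamma s)^{-\sigma}s^{\sigma-1}\,ds=\gamma^{-\sigma}\log C(n)+O(1)=\gamma^{-\sigma}\log n+o(\log n)$; the $h(n,s)$-contribution is at most $\eta_n$ times this, hence $o(\log n)$; and the $H_n(0)$-contribution is $H_n(0)C(n)^\sigma/\sigma=O((\log n)^{-\sigma})$. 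On $[C(n),n]$, using \eqref{2.3} and $H_n(0)\le cn^{-\sigma}$, the absolute value of the integral is at most $c_2\gamma^{-\sigma}(\log n-\log C(n))+cn^{-\sigma}\cdot n^{\sigma}/\sigma=O(\log\log n)$. Dividing by $\log n$ gives $\frac{n^\sigma}{\log n}J_{n1}(\sigma)\to\gamma^{-\sigma}$. I expect this case to be the main obstacle: identifying the exact constant forces one to use the refined expansion \eqref{2.3.1}, and the cutoff $C(n)$ must be chosen in the narrow window where simultaneously $C(n)/n\to0$ (so that \eqref{2.3.1} applies and $\log C(n)\sim\log n$) and $\log n-\log C(n)=o(\log n)$ (so that the tail $\int_{C(n)}^n$ contributes only $O(\log\log n)$); parts (a) and (b) are routine once the dominating functions are in place.
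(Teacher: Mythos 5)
Your proof is correct and follows essentially the same route as the paper's: dominated convergence (with the dominating function supplied by the two-sided bound \eqref{2.3}) for parts (a) and (b), and a split at a threshold $C(n)$ with $C(n)/n\to0$ combined with the sharp asymptotic \eqref{2.3.1} on the inner range and \eqref{2.3} on the tail for part (c). The only differences are cosmetic --- the paper runs part (a) through the substitution $s=e^{-t}$ before changing back, and in part (c) it separates the $H_n(0)$-contribution as a third term and uses the cutoff $n(\log n)^{-1/(2\sigma)}$ rather than your $n/\log n$, both lying in the same admissible window where $C(n)/n\to0$ and $\log n-\log C(n)=o(\log n)$.
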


\begin{proof}
(a) Recalling the definition of $J_{n1}(r)$, we have
\begin{align}
n^{\sigma}J_{n1}(r)\nonumber
=&\int_{0}^{1}n^{\sigma}\mbb{E}(e^{-tZ_{n}};Z_{n}>0)t^{r-1}dt\\\nonumber
=&\int_{0}^{1}n^{\sigma}H_{n}(e^{-t})t^{r-1}dt-
\int_{0}^{1}n^{\sigma}H_{n}(0)t^{r-1}dt\\\label{4.4}
=&R_{1}(n,r)-R_{2}(n,r).
\end{align}
Obviously,
\begin{align}\label{4.5}
\lim\limits_{n\rightarrow\infty} R_{2}(n,r)=
\int_{0}^{1}U(0)t^{r-1}dt<\infty
\end{align}
holds by \eqref{2.1}. Now making a change of variable $s=e^{-t}$, we obtain
\begin{align*}
R_{1}(n,r)
=&\int_{\frac{1}{e}}^{1}n^{\sigma}H_{n}(s)(-\log s)^{r-1}\frac{1}{s}ds.
\end{align*}
Define
\begin{align*}
d_{n}(s)=n^{\sigma}H_{n}(s)(-\log s)^{r-1}\frac{1}{s},\quad  s\in (0,1).
\end{align*}
Then
\begin{align*}
d_{n}(s)\rightarrow d(s):=U(s)(-\log s)^{r-1}\frac{1}{s}.
\end{align*}
From Lemma \ref{l2.2}, we know that there exists a constant $c>0$ such that for each $n\geq 1$ and each $t\in [e^{-1},1)$,
\begin{align*}
H_{n}(t)\leq c(1-\gamma n\log t)^{-\sigma}.
\end{align*}
Hence,
\begin{align*}
d_{n}(s)\leq cn^{\sigma}(1-\gamma n\log s)^{-\sigma}(-\log s)^{r-1}\frac{1}{s}:=g_{n}(s).
\end{align*}
It is not difficult to see that as $n\to\infty$,
\begin{align*}
g_{n}(s)\uparrow c(-\gamma \log s)^{-\sigma}(-\log s)^{r-1}\frac{1}{s}:=g(s),
\end{align*}
and for $r> \sigma$,
\begin{align*}
\int_{\frac{1}{e}}^{1}g(s)ds=c\int_{0}^{1}(\gamma t)^{-\sigma}t^{r-1}dt< \infty.
\end{align*}
Using the dominated convergence theorem, we have
\begin{align}\label{4.7}
\int_{\frac{1}{e}}^{1}d_{n}(s)ds\longrightarrow \int_{\frac{1}{e}}^{1}d(s)ds,\quad n\rightarrow \infty.
\end{align}
By a change of variable $u=-\log s$, the right side of (\ref{4.7}) turns out to be
\begin{align*}
\int_{0}^{1}U(e^{-u})u^{r-1}du<\infty.
\end{align*}
Thus, we have
\begin{align}\label{4.8}
R_{1}(n,r)\rightarrow\int_{0}^{1}U(e^{-u})u^{r-1}du<\infty.
\end{align}
Combing \eqref{4.4}, \eqref{4.5} and \eqref{4.8}, part (a) follows.

(b) Let $t=\frac{s}{n}$. Then,
\begin{align*}
n^{r}J_{n1}(r)
=&\int_{0}^{n}(H_{n}(e^{-\frac{s}{n}})-H_{n}(0))s^{r-1}ds\\
=&\int_{0}^{\infty}I_{(s\leq n)} (H_{n}(e^{-\frac{s}{n}})-H_{n}(0))s^{r-1}ds\\
:=&\int_{0}^{\infty}q_{n}(s)s^{r-1}ds.
\end{align*}
Using  Lemma \ref{l2.2}, there exists $c>0$ such that for each $n\geq1$ and $s>0$,
\begin{align*}
q_{n}(s)\leq c(1+\gamma s)^{-\sigma} :=v(s),
\end{align*}
and
\begin{align*}
q(s):=\lim\limits_{n\rightarrow \infty}q_{n}(s)=(1+\gamma s)^{-\sigma}.
\end{align*}
Clearly, for  $r<\sigma$,
\begin{align*}
\int_{0}^{\infty}v(s)s^{r-1}ds< \infty.
\end{align*}
Therefore, using the dominated convergence theorem, we have
\begin{align*}
\lim\limits_{n\rightarrow\infty}n^{r}J_{n1}(r)=\int_{0}^{\infty}q(s)s^{r-1}ds< \infty.
\end{align*}
(c) First, we give the decomposition
\begin{align}
\frac{n^{\sigma}}{\log n}J_{n1}(\sigma)\nonumber
=&\int_{0}^{(\log n)^{-\frac{1}{2\sigma}}}\frac{n^{\sigma}}{\log n}H_{n}(e^{-t})t^{\sigma-1}dt
+\int_{(\log n)^{-\frac{1}{2\sigma}}}^{1}\frac{n^{\sigma}}{\log n}H_{n}(e^{-t})t^{\sigma-1}dt\\\nonumber
-&\int_{0}^{1}\frac{n^{\sigma}}{\log n}H_{n}(0)t^{\sigma-1}dt\\\label{4.9}
:=&Q_{1}(n,\sigma)+Q_{2}(n,\sigma)-Q_{3}(n,\sigma).
\end{align}
By Lemma \ref{l2.1}, we know that
\begin{align*}
\lim\limits_{n\rightarrow\infty}n^{\sigma}H_{n}(0)=U(0)<\infty,
\end{align*}
hence,
\begin{align}\label{4.10}
\lim\limits_{n\rightarrow\infty}Q_{3}(n,\sigma)=0.
\end{align}
By the monotonicity of $H_{n}(s)$ and Lemma \ref{l2.2},
\begin{align}\label{4.11}
Q_{2}(n,\sigma)\leq c(\log n)^{-\frac{1}{2}}\rightarrow0,\quad as~n\rightarrow\infty.
\end{align}
Finally, we consider $Q_{1}(n,\sigma)$. By changing variables and \eqref{2.3.1},
\begin{align}
Q_{1}(n,\sigma)\nonumber
=&\frac{1}{\log n}\int_{0}^{\frac{n}{(\log n)^{\frac{1}{2\sigma}}}}
H_{n}(e^{-\frac{s}{n}})s^{\sigma-1}ds\\\label{4.12}
=&\frac{1}{\log n}\int_{0}^{\frac{n}{(\log n)^{\frac{1}{2\sigma}}}}
(1+\gamma s)^{-\sigma}s^{\sigma-1}(1+h(n,s))ds.
\end{align}
Noticing that
\begin{align}\label{4.13}
\frac{1}{\log n}\int_{0}^{\frac{n}{(\log n)^{\frac{1}{2\sigma}}}}
(1+\gamma s)^{-\sigma}s^{\sigma-1}ds
\sim\gamma^{-\sigma},
\end{align}
and by Lemma \ref{l2.2},
\begin{align*}
\lim\limits_{n\rightarrow\infty}\sup\limits_{0<s<
\frac{n}{(\log n)^{1/2\sigma}}}|h(n,s)|=0,
\end{align*}
we have
\begin{align}\label{4.14}
\lim\limits_{n\rightarrow\infty}Q_{1}(n,\sigma)=\gamma^{-\sigma}.
\end{align}
Consequently, the assertion of  part (c) follows from \eqref{4.9}-\eqref{4.11} and \eqref{4.14}.\qed
\end{proof}

\begin{lemma}\label{l3.1}
Suppose condition (A) is satisfied. Then\\
(a)\begin{align*}
\lim\limits_{n\rightarrow\infty}n^{r}J_{n2}(r)=0,\quad r<\sigma.
\end{align*}
(b)\begin{align*}
  \lim\limits_{n\rightarrow\infty}n^{\sigma}J_{n2}(r)
=\int_{1}^{\infty}(U(e^{-t})-U(0))t^{r-1}dt<\infty.
\end{align*}

\end{lemma}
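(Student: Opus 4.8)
The plan is to reduce both parts to a single elementary pointwise bound on $H_{n}(e^{-t})-H_{n}(0)$ and then apply, respectively, dominated convergence and a crude size estimate together with Lemma~\ref{l2.1}. Recall that
\[
J_{n2}(r)=\int_{1}^{\infty}\big(H_{n}(e^{-t})-H_{n}(0)\big)t^{r-1}dt,
\]
and, writing $p_{n,j}:=\mbb{P}(Z_{n}=j)$, that $H_{n}(e^{-t})-H_{n}(0)=\sum_{j\geq1}p_{n,j}e^{-tj}$, since $H_{n}(0)=p_{n,0}$ and $H_{n}(e^{-t})=\sum_{j\geq0}p_{n,j}e^{-tj}$.

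The key observation is that for every $t\geq1$ and $j\geq1$ one has $(t-1)(j-1)\geq0$, i.e. $e^{-tj}\leq e^{1-t}e^{-j}$. Summing against $p_{n,j}$ gives, for all $n\geq1$ and all $t\geq1$,
\[
H_{n}(e^{-t})-H_{n}(0)\leq e^{1-t}\big(H_{n}(e^{-1})-H_{n}(0)\big).
\]
By Lemma~\ref{l2.1}, $n^{\sigma}H_{n}(e^{-1})\to U(e^{-1})$ and $n^{\sigma}H_{n}(0)\to U(0)$, with $U(e^{-1})<\infty$ since $e^{-1}$ lies in the open unit disc; hence $\sup_{n}n^{\sigma}\big(H_{n}(e^{-1})-H_{n}(0)\big)=:C<\infty$, and therefore
\[
n^{\sigma}\big(H_{n}(e^{-t})-H_{n}(0)\big)\leq Ce^{1-t},\qquad t\geq1,\ n\geq1.
\]

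For part (b), this shows that $t\mapsto Ce^{1-t}t^{r-1}$ is an integrable dominating function on $[1,\infty)$ for the family $n^{\sigma}\big(H_{n}(e^{-t})-H_{n}(0)\big)t^{r-1}$; since for each fixed $t\geq1$ the point $e^{-t}$ lies in a compact subset of the open unit disc, Lemma~\ref{l2.1} gives $n^{\sigma}\big(H_{n}(e^{-t})-H_{n}(0)\big)\to U(e^{-t})-U(0)$ pointwise. Dominated convergence then yields $n^{\sigma}J_{n2}(r)\to\int_{1}^{\infty}\big(U(e^{-t})-U(0)\big)t^{r-1}dt$, and applying the same inequality $(t-1)(j-1)\geq0$ to the nonnegative power series $U(x)=\sum_{j}\mu_{j}x^{j}$ gives $U(e^{-t})-U(0)\leq e^{1-t}\big(U(e^{-1})-U(0)\big)$, so the limiting integral is finite.

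For part (a) no limit identification is needed: integrating the bound $H_{n}(e^{-t})-H_{n}(0)\leq e^{1-t}\big(H_{n}(e^{-1})-H_{n}(0)\big)$ against $t^{r-1}$ over $[1,\infty)$ gives $J_{n2}(r)\leq C'\big(H_{n}(e^{-1})-H_{n}(0)\big)$ with $C'=e\int_{1}^{\infty}e^{-t}t^{r-1}dt<\infty$, whence $n^{r}J_{n2}(r)\leq C'n^{r-\sigma}\cdot n^{\sigma}\big(H_{n}(e^{-1})-H_{n}(0)\big)\to0$ because $r<\sigma$. The only point requiring a little care is the uniform-in-$n$ control of $n^{\sigma}\big(H_{n}(e^{-1})-H_{n}(0)\big)$, which is exactly what Lemma~\ref{l2.1} provides, together with the finiteness of $U$ at the interior point $e^{-1}$; beyond that the argument is routine, so I do not expect a genuine obstacle here.
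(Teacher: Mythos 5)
Your proof is correct. It rests on the elementary inequality $tj\geq t+j-1$ for $t\geq1$, $j\geq1$, which gives the uniform domination $n^{\sigma}\big(H_{n}(e^{-t})-H_{n}(0)\big)\leq Ce^{1-t}$ and lets you treat both parts in one stroke by ordinary dominated convergence (plus, for part (a), the factor $n^{r-\sigma}\to0$).

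The paper takes a slightly different route: it splits $J_{n2}(r)$ into the contribution of $\{Z_{n}=1\}$ and of $\{Z_{n}\geq2\}$, handling the first term by direct appeal to $n^{\sigma}\mbb{P}(Z_{n}=1)\to\mu_{1}$, and bounding the second via $e^{-t(Z_{n}-1)}\leq e^{-tZ_{n}/2}$ on $\{Z_{n}\geq2\}$, which gives the domination $n^{\sigma}H_{n}(e^{-t/2})e^{-t}t^{r-1}$; it then invokes a generalized (Pratt-type) dominated convergence theorem. Your observation that $(t-1)(j-1)\geq0$ makes the split $\{Z_{n}=1\}$ versus $\{Z_{n}\geq2\}$ unnecessary, yields a dominating function that is uniform in $n$ from the outset, and therefore permits the classical dominated convergence theorem rather than the modified version the paper cites. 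Both proofs exploit the same underlying mechanism --- extracting the extra exponential decay in $t$ hidden in $e^{-tZ_{n}}$ when $Z_{n}\geq1$ and $t\geq1$, together with Lemma \ref{l2.1} for the $n^{\sigma}$-scale control --- but your version is more streamlined and a bit more elementary. One small point worth stating explicitly in a write-up: the bound $\sup_{n}n^{\sigma}\big(H_{n}(e^{-1})-H_{n}(0)\big)<\infty$ follows because each of the two sequences $n^{\sigma}H_{n}(e^{-1})$ and $n^{\sigma}H_{n}(0)$ converges (to $U(e^{-1})$ and $U(0)$, both finite), hence is bounded; you say this, and it is correct.
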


\begin{proof}
First, we decompose the integral into two components,
\begin{align}
\nonumber
n^{x}J_{n2}(r)=&\int_{1}^{\infty}n^{x}\mbb{E}(e^{-tZ_{n}} I_{(Z_{n}=1)})t^{r-1}dt +\int_{1}^{\infty}n^{x}\mbb{E}(e^{-tZ_{n}}I_{(Z_{n}\geq 2)})t^{r-1}dt\\ \label{3.3}
=& K_{1}(n,x,r)+K_{2}(n,x,r).
\end{align}
Note that
\begin{align*}
\int_{1}^{\infty}e^{-t}t^{r-1}dt<\infty.
\end{align*}
Together with \eqref{2.2}, we obtain as $n\rightarrow\infty$,
\begin{align}\label{3.4a}
K_{1}(n,\sigma,r)=\int_{1}^{\infty}n^{\sigma}\mbb{P}(Z_{n}=1)e^{-t}t^{r-1}dt
\rightarrow \mu_{1}\int_{1}^{\infty}e^{-t}t^{r-1}dt,
\end{align}
and
\begin{align}\label{3.5}
K_{1}(n,r,r)=\int_{1}^{\infty}n^{r}\mbb{P}(Z_{n}=1)e^{-t}t^{r-1}dt
\rightarrow 0, \quad r<\sigma.
\end{align}
Moreover, by Lemma \ref{l2.1},
\begin{align*}
n^{x}H_{n}(e^{-\frac{1}{2}})\leq n^{\sigma}H_{n}(e^{-\frac{1}{2}})\rightarrow U(e^{-\frac{1}{2}})<\infty,\quad x\leq \sigma,
\end{align*}
as $n\rightarrow\infty$. Therefore,
\begin{align*}
K_{2}(n,x,r)
=& \int_{1}^{\infty}n^{x}\mbb{E}(e^{-t(Z_{n}-1)}I_{(Z_{n}\geq 2)})e^{-t}t^{r-1}dt\\
\leq & \int_{1}^{\infty}n^{x}\mbb{E}(e^{-\frac{t}{2}Z_{n}}I_{(Z_{n}\geq 2)})e^{-t}t^{r-1}dt\\
\leq  &\int_{1}^{\infty}n^{x}H_{n}(e^{-\frac{t}{2}})e^{-t}t^{r-1}dt\\
< & \infty.
\end{align*}
Then
\begin{align}\label{3.7}
\lim\limits_{n\rightarrow \infty}K_{2}(n,r,r)=0,\quad r<\sigma.
\end{align}
 Thus part (a) follows from \eqref{3.5} and \eqref{3.7}.

Note that
\begin{align*}
l(n,\sigma,t,r):=n^{\sigma}\mbb{E}(e^{-tZ_{n}}I_{(Z_{n}\geq 2)})t^{r-1}
\leq n^{\sigma}H_{n}(e^{-\frac{t}{2}})e^{-t}t^{r-1}:=\widetilde{l}(n,\sigma,t,r).
\end{align*}
It follows from \eqref{2.1} that
there exists a constant $c>0$ such that for $t\geq 1$ and $n\geq1$,
\begin{align*}
\widetilde{l}(n,\sigma,t,r)\leq ce^{-t}t^{r-1},
\end{align*}
and
\begin{align*}
\lim\limits_{n\rightarrow \infty}\widetilde{l}(n,\sigma,t,r)
=U(e^{-\frac{t}{2}})e^{-t}t^{r-1}:=\widetilde{l}(t,r).
\end{align*}
Hence, using the dominated convergence theorem,
\begin{align*}
\lim\limits_{n\rightarrow \infty}\int_{1}^{\infty}\widetilde{l}(n,\sigma,t,r)dt=\int_{1}^{\infty}\widetilde{l}(t,r)dt <\infty.
\end{align*}
Thus, using the  modification of  dominated convergence theorem (\cite[Theorem 1.21]{OlavK}), we have
\begin{align}\label{3.10}
\lim\limits_{n\rightarrow \infty}K_{2}(n,\sigma,r)=\int_{1}^{\infty}\lim\limits_{n\rightarrow \infty}l(n,\sigma,t,r)dt.
\end{align}
Applying Lemma~\ref{l2.1} and combining \eqref{3.3}--\eqref{3.4a} with \eqref{3.10}, we obtain part (b).\qed
\end{proof}



\begin{lemma}\label{l2.3}(\cite[Theorem 2.1]{B82})
Suppose condition (A) is satisfied. If there exists a constant $c>0$ such that  $\lim\limits_{n\rightarrow\infty}k_{n}=\infty$ and $\sup\limits_n \frac{k_{n}}{n} \leq c$, then for $k\in[k_{n},cn]$,
\begin{align}\label{2.4}
\mbb{P}(Z_{n}=k)=\frac{1}{\Gamma(\sigma)\gamma^{\sigma}}
\frac{k^{\sigma-1}}{n^{\sigma}}\exp \bigg\{-\frac{k}{\gamma n}\bigg\}(1+\eta_{1}(n,k)),
\end{align}
where $\eta_{1}$ is a function on $\mathbb{N}\times [0,\infty)$ such that
\begin{align*}
\lim_{n\to \infty}\sup\limits_{k\in[k_{n},cn]}
|\eta_{1}(n,k)|=0.
\end{align*}
\end{lemma}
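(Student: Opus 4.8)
This is the local central limit theorem accompanying the weak convergence $Z_n/n\Rightarrow\mathrm{Gamma}(\sigma,\gamma)$: the right-hand side of \eqref{2.4} is $n^{-1}$ times the value at $x=k/n$ of the $\mathrm{Gamma}(\sigma,\gamma)$ density $x\mapsto\frac{1}{\Gamma(\sigma)\gamma^{\sigma}}x^{\sigma-1}e^{-x/\gamma}$, i.e.\ of the law whose Laplace transform is $s\mapsto(1+\gamma s)^{-\sigma}$, which is exactly the limit isolated in Lemma~\ref{l2.2} after the rescaling $s\mapsto s/n$. The plan is to invert the generating function $H_n$ by Cauchy's formula and run a saddle-point analysis. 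For $\rho\in(0,1)$,
\begin{align*}
\mbb{P}(Z_n=k)=\frac{\rho^{-k}}{2\pi}\int_{-\pi}^{\pi}H_n(\rho e^{i\theta})e^{-ik\theta}\,d\theta ,
\end{align*}
and I would take the radius at the saddle point, $\rho=\rho_{n,k}=e^{-s_{n,k}/n}$ with $s_{n,k}$ solving $-n\,\frac{d}{ds}\log H_n(e^{-s/n})=k$; by Lemma~\ref{l2.2} one gets $s_{n,k}=n\sigma/k+O(1)$, so $0<s_{n,k}\le c'n/k_n=o(n)$ uniformly for $k\in[k_n,cn]$ (the formal saddle only wants $\rho\ge1$ when $k$ is comparable to $n$, a range in which one may instead freeze any small $\rho<1$ and run the classical argument), which keeps us inside the scope of \eqref{2.3}--\eqref{2.3.1}. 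The associated exponentially tilted law of $Z_n$ then has mean $k$ and variance of order $k^2/\sigma$, so the relevant angular scale around $\theta=0$ is $|\theta|\asymp k^{-1}$.

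Substituting $\theta=t/n$, I would split $[-\pi n,\pi n]$ into a central arc $|t|\le T_n\,n/k$ (with $T_n\to\infty$ slowly), an intermediate arc $T_n\,n/k\le|t|\le\varepsilon n$, and a far arc $\varepsilon n\le|t|\le\pi n$. On the central arc one needs the complex-argument refinement of Lemma~\ref{l2.2},
\begin{align*}
H_n\big(e^{-(s_{n,k}-it)/n}\big)=\big(1+\gamma(s_{n,k}-it)\big)^{-\sigma}\big(1+o(1)\big),
\end{align*}
uniformly there, obtained by rerunning the computation behind \eqref{2.3.1} with the iterates $f_\ell(w)$ controlled for $w$ near $1$ with $\mathrm{Re}(1-w)\ge0$ (using $h(x)=\exp\{-\beta(1-x)+O((1-x)^2)\}$ and the refined rate for $1-f_\ell$ recorded in \eqref{frate}); combined with the exact inversion $\frac{1}{2\pi}\int_{\mbb{R}}(b-it)^{-\sigma}e^{-ixt}\,dt=\frac{x^{\sigma-1}e^{-bx}}{\Gamma(\sigma)}$ $(b,x>0)$ and the saddle-point choice of $\rho$ (which kills the linear-in-$t$ phase), this produces precisely the main term $\frac{1}{\Gamma(\sigma)\gamma^{\sigma}}\frac{k^{\sigma-1}}{n^{\sigma}}e^{-k/(\gamma n)}$. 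On the far arc I would use $|H_n(z)|\le H_n(|z|)$ together with aperiodicity: $\gcd\{j:p_j>0\}=1$ with $p_0>0$ gives $|f(z)|<1$ for $|z|\le1$, $z\neq1$, so for $\theta$ bounded away from $0$ the modulus $|f_\ell(\rho e^{i\theta})|$ is forced below $1$ after $O(1)$ steps, and tracking $\mathrm{Re}(1-f_\ell(\rho e^{i\theta}))$ through the product shows $|H_n(\rho e^{i\theta})|=O(n^{-\sigma})$ there, which settles the far arc at least when $\sigma>1$.

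The main obstacle is the uniformity over the wide window $k\in[k_n,cn]$ — especially the regime $k=o(n)$, which is precisely what forces the central arc to have width $\asymp n/k$ rather than $O(1)$ — together with the intermediate arc (and, when $\sigma\le1$, also the far arc). The complex bound makes $|H_n(\rho_{n,k}e^{it/n})|$ decay like $(1+|t|)^{-\sigma}$, so the intermediate arc is directly $o(n^{-\sigma})$ when $\sigma>1$; but for $\sigma\le1$ this modulus is not integrable in $t$, so one must exploit the oscillation $e^{-ikt/n}$, integrating by parts once in $t$ (with $\frac{d}{dt}H_n(\rho e^{it/n})$ again estimated through Lemma~\ref{l2.2}) and checking that the boundary term at $|t|=T_n\,n/k$ is genuinely small thanks to the $n/k$-width of the central arc; the same cancellation is needed on the far arc when $\sigma\le1$, since there the bound $O(n^{-\sigma})$ need not beat the main term $\asymp k^{\sigma-1}/n^{\sigma}$. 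A conceptually cleaner alternative is the skeleton decomposition $Z_n\stackrel{d}{=}\sum_{j=1}^{n}\sum_{\ell=1}^{Y_j}\widehat Z_{n-j}^{(j,\ell)}$ into independent summands (a Galton--Watson tree founded by each immigrant) followed by a general triangular-array local limit theorem, but since the summand laws are themselves known only through the same $f_\ell$-asymptotics, this does not really shorten the argument.
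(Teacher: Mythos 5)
The paper does not actually prove this lemma; it is imported verbatim from Mellein~(1982b), Theorem~2.1 (\cite{B82}), together with Lemmas~\ref{l2.4} and \ref{l2.5} forming the local-probability toolkit the authors simply cite. So there is no in-paper proof to compare against, and the question is whether your sketch would stand on its own. Your strategy---Cauchy inversion, saddle-point radius, uniform asymptotics of Lemma~\ref{l2.2} on a central arc, decay of $|H_n|$ off it---is indeed the standard route to such local limit theorems, and the Fourier identity you invoke correctly reproduces the Gamma density, so the spirit is right.

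As written, though, the proposal is a sketch with gaps you flag yourself, at least two of which are missing ideas rather than missing details. First, the complex-argument extension of Lemma~\ref{l2.2}, namely $H_n(e^{-(s-it)/n})=(1+\gamma(s-it))^{-\sigma}(1+o(1))$ uniformly on the central arc, is the real work: the proof of \eqref{2.3.1} sandwiches $e^{-s/n}$ between the \emph{real} iterates $f_m(0)$, a monotonicity device that does not transfer to complex arguments, so one must instead track the recursion $1-f(w)=(1-w)(1-\gamma(1-w)+o(1-w))$ for $w\to1$ with $\mathrm{Re}(1-w)\ge 0$ and control both modulus and argument of $1-f_\ell(w)$ along the way; this is not a ``rerun'' of the real proof. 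Second, your fallback for $k$ above the mean of $Z_n$---where the formal saddle asks for $\rho>1$, outside the disc of convergence---is ``freeze any small $\rho<1$'', but a wrong radius destroys exactly the localization the saddle provides; the cleaner move, used by Mellein and by Nagaev--Vachtel for the no-immigration case, is to abandon tilting altogether, take $\rho=1$, change variables $t=u/n$ in $\mathbb{P}(Z_n=k)=\frac{1}{2\pi}\int_{-\pi}^{\pi}H_n(e^{it})e^{-ikt}\,dt$, and control the tail with the polynomial bound of Lemma~\ref{l2.5}; tilting buys nothing here since the limit law is literally Gamma. Third, the repairs you indicate for $\sigma\le1$ (integration by parts in $t$, with boundary-term bookkeeping) and for the $k=o(n)$ end of the window $[k_n,cn]$ are plausible but genuinely uncarried-out computations, not routine fills. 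In short: sound approach, not a proof.
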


\begin{lemma}\label{l6.1}
Suppose condition (A) is satisfied. Then for every $\varepsilon>0$ such that $\sigma (1-\varepsilon)^{2}-\varepsilon^{2}/\gamma>0$, there exists positive constant $c$ such that for any $n\ge1$ and $|t|\leq \pi/2$,
\begin{align}\label{2.5a}
|H_{n}(e^{it})|\leq c(n|t|)^{-(\sigma (1-\varepsilon)^{2}-\varepsilon^{2}/\gamma)}.
\end{align}
\end{lemma}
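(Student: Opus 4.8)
The plan is to split according to the size of $t$. Since $|H_n(e^{it})|\le H_n(1)=1$, the asserted inequality is trivial (for $c$ large) whenever $n|t|\le C_0$, so it suffices to treat the range $n|t|\ge C_0$, with $C_0$ a large constant fixed later; by symmetry assume $t>0$, and write $\kappa=\sigma(1-\varepsilon)^2-\varepsilon^2/\gamma$, which lies in $(0,\sigma)$. For $t_0\le t\le\pi/2$, where $t_0\in(0,\pi/2]$ is a small threshold fixed below, I would use the aperiodicity part of (A): $g.c.d.\{k:p_k>0\}=1$ forces $|f(e^{is})|<1$ for $s\in(0,2\pi)$, hence $q:=\sup_{t_0\le|s|\le\pi}|f(e^{is})|<1$ by compactness. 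From $|f(z)|\le f(|z|)$ and monotonicity of $f$ on $[0,1]$, an induction gives $|f_k(e^{it})|\le f_{k-1}(q)$ for $k\ge1$, so $|H_n(e^{it})|\le\prod_{k=1}^{n-1}h(f_{k-1}(q))=H_{n-1}(q)$; writing $q=e^{-s/(n-1)}$ and applying Lemma~\ref{l2.2} gives $H_{n-1}(q)\le c(1+\gamma s)^{-\sigma}\le c'n^{-\sigma}\le c''(n|t|)^{-\kappa}$ since $\kappa<\sigma$ and $t\le\pi/2$.

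In the main range $0<t\le t_0$, $nt\ge C_0$, I would set $z_k=f_k(e^{it})$, $w_k=1-z_k$, so $|w_0|=2\sin(t/2)\in[\tfrac2\pi t,\,t]$ and $\operatorname{Re}w_k=1-\operatorname{Re}f_k(e^{it})\ge0$ (a generating function satisfies $\operatorname{Re}f_k(e^{it})\le f_k(1)=1$). The analytic input is $f(1-w)=1-w+\gamma w^2+r(w)$ with $|r(w)|\le|w|^2\omega(|w|)$ and $\omega(\delta)\downarrow0$ as $\delta\downarrow0$: $f''(1)=2\gamma<\infty$ gives $r(w)=o(|w|^2)$, while $\sum_jp_jj^2\log j<\infty$ gives a logarithmic rate $\omega(\delta)=O(1/\log(1/\delta))$ (one cannot write $r(w)=O(|w|^3)$, since $f'''(1)$ may be infinite). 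Hence $w_{k+1}=w_k-\gamma w_k^2-r(w_k)$; passing to $u_k:=1/w_k$ (legitimate, as $w_k\ne0$ for $0<t\le\pi/2$), and using the telescoping $\tfrac1{1-f_n(s)}-\tfrac1{1-s}=\sum_{k<n}\tfrac{f(u)-u}{(1-f(u))(1-u)}\big|_{u=f_k(s)}$ whose summands are $\gamma+O(|1-u|)+O(\omega(|1-u|))$, one obtains $u_k=u_0+\gamma k+E_k$ with $|E_k|\le C\sum_{j<k}(|w_j|+\omega(|w_j|))$. A direct computation gives $\operatorname{Re}u_0=\tfrac12$, $|u_0|=1/|w_0|$, $|\operatorname{Im}u_0|=|u_0|\cos(t/2)\ge\tfrac12|u_0|$, so $|u_0+\gamma k|\ge\max\{\tfrac1{2|w_0|},\,\gamma k\}$ for every $k$.

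The crux, which I expect to be the main obstacle, is a bootstrap showing that for $t_0$ small enough $|E_k|\le\tfrac12|u_0+\gamma k|$ for all $k$, hence $|u_k|\ge\tfrac12|u_0+\gamma k|$ and
\[
|1-f_k(e^{it})|=|w_k|\le\min\Big\{4|w_0|,\ \tfrac2{\gamma k}\Big\},\qquad k\ge0,
\]
uniformly in $t\in(0,t_0]$. For $k\le 1/(2\gamma|w_0|)$ one uses $|E_k|\le Ck(|w_0|+\omega(|w_0|))\le\tfrac14/|w_0|$, which holds because $|w_0|+\omega(|w_0|)$ is small; for larger $k$ one feeds $|w_j|\le 2/(\gamma j)$ back into the sum to get $|E_k|\lesssim k/\log(\gamma k)+\log(\gamma k|w_0|)\le\tfrac12\gamma k$, valid since such $k$ automatically exceed a $t$-independent constant once $t_0$ is small. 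The delicate point is the uniformity in $t$ together with the fact that the remainder is controlled only at a logarithmic rate.

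Granting the bootstrap, one has $\sum_{k\ge0}|w_k|^2\le C$ uniformly, and for any $\varepsilon'>0$ there is $K_{\varepsilon'}\asymp1/(\varepsilon'\gamma|w_0|)$ with $|E_k|\le\varepsilon'\gamma k$ for $k\ge K_{\varepsilon'}$; then $\operatorname{Re}w_k=\operatorname{Re}u_k/|u_k|^2\ge(1-\varepsilon')\gamma k/(|u_0|+(1+\varepsilon')\gamma k)^2$ there, and comparison of the sum with an integral gives $\sum_{k=0}^{n-1}\operatorname{Re}w_k\ge\sum_{k=K_{\varepsilon'}}^{n-1}\operatorname{Re}w_k\ge\frac{1-\varepsilon'}{(1+\varepsilon')^2\gamma}\log(nt)-C_{\varepsilon'}$, using $|u_0+\gamma n|\ge\gamma n|w_0|\ge\tfrac{2\gamma}{\pi}nt$ and $nt\ge C_0$ with $C_0$ large (depending on $\varepsilon'$). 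Finally, from $\log h(z)=-\beta(1-z)+\varphi(z)$ with $|\varphi(z)|\le c_5|1-z|^2$ near $z=1$,
\[
\log|H_n(e^{it})|=\sum_{k=0}^{n-1}\log|h(z_k)|\le-\beta\sum_{k=0}^{n-1}\operatorname{Re}w_k+c_5\sum_{k=0}^{n-1}|w_k|^2\le-\frac{\sigma(1-\varepsilon')}{(1+\varepsilon')^2}\log(nt)+C'.
\]
Since $\sigma(1-\varepsilon')/(1+\varepsilon')^2\to\sigma>\kappa$ as $\varepsilon'\to0$, choosing $\varepsilon'$ small makes the exponent at least $\kappa$, which gives $|H_n(e^{it})|\le c(n|t|)^{-\kappa}$ (with $c$ enlarged to also cover $n|t|\le C_0$).
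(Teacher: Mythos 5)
Your argument is correct and is essentially the Nagaev--Vachtel machinery that the paper invokes only by citation (``similar to [N06, Lemmas 6--9] with $f'$ replaced by $h$''): aperiodicity disposes of $|t|$ bounded away from zero, the recursion $u_k=1/(1-f_k(e^{it}))=u_0+\gamma k+E_k$ is controlled by a bootstrap driven by the Taylor expansion of $f$ at $1$, and $\log|H_n(e^{it})|$ is then summed term by term from the expansion $\log h(z)=-\beta(1-z)+\varphi(z)$. The only cosmetic difference is that you land on the exponent $\sigma(1-\varepsilon')/(1+\varepsilon')^2$, which can be made arbitrarily close to $\sigma$, rather than the lemma's specific $\sigma(1-\varepsilon)^2-\varepsilon^2/\gamma$ (a parametrization inherited verbatim from N06); since the latter is always strictly below $\sigma$, your version immediately implies the stated bound.
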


\begin{proof}
The proof is essentially  similar  to \cite[Lemmas 6-9]{N06}, with their $f'$ in (2.18)  replaced by our $h$. We omitted the details here.\qed
\end{proof}

\begin{lemma}\label{l2.4}
Suppose condition (A) is satisfied. Then, there exists positive constant $c$ such that for any $n,k\ge1$,
\begin{align}\label{2.5}
\mbb{P}(Z_{n}=k)\leq \frac{c}{k}.
\end{align}
\end{lemma}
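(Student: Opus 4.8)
\medskip

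\noindent\textbf{Proof plan.} The plan is to start from the Fourier inversion formula for the integer--valued $Z_{n}$,
\[
\mbb{P}(Z_{n}=k)=\frac{1}{2\pi}\int_{-\pi}^{\pi}H_{n}(e^{it})e^{-ikt}\,dt .
\]
Since $\mbb{E}Z_{n}=H_{n}'(1)=n\beta<\infty$ (here $H_{n}'$ denotes the derivative of the power series \eqref{1.2}, whose series converges absolutely on $|x|=1$), the map $t\mapsto H_{n}(e^{it})$ is $C^{1}$ and $2\pi$--periodic with $\frac{d}{dt}H_{n}(e^{it})=ie^{it}H_{n}'(e^{it})$. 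Integrating by parts once, the endpoint contributions cancel by periodicity, and one obtains
\[
\mbb{P}(Z_{n}=k)=\frac{1}{2\pi k}\int_{-\pi}^{\pi}e^{i(1-k)t}H_{n}'(e^{it})\,dt ,
\qquad\text{so}\qquad
\mbb{P}(Z_{n}=k)\le\frac{1}{2\pi k}\int_{-\pi}^{\pi}\bigl|H_{n}'(e^{it})\bigr|\,dt .
\]
Thus the whole lemma reduces to the single uniform estimate
\[
C_{0}:=\sup_{n\ge1}\int_{-\pi}^{\pi}\bigl|H_{n}'(e^{it})\bigr|\,dt<\infty , \tag{$\star$}
\]
after which the assertion holds with $c=C_{0}/(2\pi)$.

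To prove $(\star)$ I would split at $|t|=1/n$. On $\{|t|\le1/n\}$ the trivial bound $|H_{n}'(e^{it})|\le\sum_{k\ge1}k\,\mbb{P}(Z_{n}=k)=n\beta$ contributes at most $2\beta$. On $\{1/n<|t|\le\pi\}$ one needs a genuine decay estimate for the derivative on the unit circle, of the form
\[
\bigl|H_{n}'(e^{it})\bigr|\le c\,n\,(n|t|)^{-\theta}\qquad\text{for some fixed }\theta>1,\ \text{uniformly in }n\ge1\text{ and }1/n<|t|\le\pi ; \tag{$\dagger$}
\]
granting $(\dagger)$, that range contributes at most $2cn\int_{1/n}^{\pi}(nt)^{-\theta}\,dt=2cn^{1-\theta}\int_{1/n}^{\pi}t^{-\theta}\,dt\le\frac{2c}{\theta-1}$, again uniformly in $n$. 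Adding the two pieces gives $(\star)$ with $C_{0}\le2\beta+\frac{2c}{\theta-1}$, and the lemma follows.

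The hard part is $(\dagger)$, and it is the step I expect to be the main obstacle. I would obtain it by differentiating the product representation $H_{n}(x)=\prod_{i=0}^{n-1}h(f_{i}(x))$ on $x=e^{it}$, which yields a sum of $n$ terms each built from $f_{j}'(e^{it})=\prod_{l<j}f'(f_{l}(e^{it}))$, the factor $h'(f_{j}(e^{it}))$, and the remaining product of $h(f_{i}(e^{it}))$'s; these are then estimated by combining Lemma~\ref{l6.1} with the same Nagaev--type control of the iterates $f_{l}(e^{it})$ as in \cite[Lemmas~6--9]{N06} and the rate estimates $1-f_{l}(0)\sim(\gamma l)^{-1}$ of \cite[Corollary~1]{Ke66} already invoked for Lemma~\ref{l2.2}. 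The heuristic that pins down $\theta$ is size--biasing: $H_{n}'(e^{it})=n\beta\,e^{-it}\widehat H_{n}(e^{it})$, where $\widehat H_{n}$ is the generating function of the size--biased $Z_{n}$; since $Z_{n}/n$ converges in law (and in $L^{1}$, consistently with \eqref{2.3.1}) to $\mathrm{Gamma}(\sigma,\gamma)$, the size--biased variable satisfies $\widehat Z_{n}/n\Rightarrow\mathrm{Gamma}(\sigma+1,\gamma)$, so $|\widehat H_{n}(e^{it})|$ should obey the analogue of Lemma~\ref{l6.1} with $\sigma$ replaced by $\sigma+1$, giving $(\dagger)$ for any $\theta<\sigma+1$; as $\sigma+1>1$ always, some admissible $\theta>1$ exists, which is all that is required. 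The extra power of $(n|t|)^{-1}$ over Lemma~\ref{l6.1} is essential: the bound $|H_{n}'|=|H_{n}|\,|(\log H_{n})'|\lesssim n\,(n|t|)^{-\sigma''}$ that would come from Lemma~\ref{l6.1} alone has exponent $\sigma''$ possibly $\le1$ when $\sigma\le1$, and then the integral in $(\star)$ diverges; moreover the factorization through $\log H_{n}$ degenerates at the (possible) zeros of $H_{n}$ on the unit circle, so the derivative must be controlled directly from the product.

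I would also note why the naive routes fail, which is what makes the integration--by--parts step the right one. The ``disk bound'' $\mbb{P}(Z_{n}=k)\le\inf_{0<r<1}r^{-k}H_{n}(r)$, optimized with Lemma~\ref{l2.2}, only gives $\mbb{P}(Z_{n}=k)\lesssim (k/n)^{\sigma}e^{-k/(\gamma n)}$, which overshoots both the true order $(k/n)^{\sigma-1}e^{-k/(\gamma n)}/n$ and the target $c/k$ by a factor of order $k$, and already fails at $k\asymp n$; and splitting the sum over $k$ and inserting the local limit theorem of Lemma~\ref{l2.3} covers only $k=O(n)$, leaving the far range $k\gg n$ untreated. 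Integration by parts retains exactly the one power of $k$ that is needed and does so uniformly over all $k\ge1$ simultaneously, which is why I would build the proof around $(\star)$.
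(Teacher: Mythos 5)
Your proposal is correct and takes essentially the same route as the paper. The paper replaces your Fourier inversion plus integration-by-parts step with Petrov's concentration-function bound applied to the variable whose generating function is $H_{n}'(s)/H_{n}'(1)$ (your size-biased $Z_n$), but both reduce the lemma to $\sup_{n}\int\bigl|H_{n}'(e^{it})\bigr|\,dt<\infty$, both split the integral at $|t|\asymp 1/n$ with the trivial bound $|H_{n}'|\leq n\beta$ near the origin, and both control the far region by differentiating the product $\prod_{k}h(f_{k}(e^{it}))$ and invoking Lemma~\ref{l6.1} together with the Nagaev--Vachtel bounds on $|f_{k}'(e^{it})|$; your size-biasing heuristic $\theta\approx\sigma+1>1$ is exactly what makes the combined exponent $b_{1}+b_{2}>1$ in the paper's estimate of $H_{2}$.
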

\begin{proof}
Suppose $g(s)$ is a probability generating function with mean $M_{0}\in(0,\infty)$ and $g_{n}(s)$ is the $n$th iteration of $g(s)$.  Clearly, for $|s|\leq 1$,
\begin{align}\label{2.6}
|g(s)|\leq 1,\quad |g_{n}(s)|\leq 1,\quad |g'(s)|\leq M_{0}.
\end{align}
Let $X$ be a random variable with characteristic function $\varphi(t)$. The following bound for the concentration function is known from \cite{PetVV75}:  for every $a>0$, it holds that
\begin{align*}
\sup_x\mathbb{P}(X=x)\leq \left(\frac{96}{95}\right)^{2}\frac{1}{a}\int_{-a}^{a}|\varphi(t)|dt.
\end{align*}
Choosing $X$ with generating function $H_{n}'(s)/H_{n}'(1)$ and letting $a=\pi/2$, then there exists a constant $c>0$ such that for any $n$,
\begin{align}\label{2.7}
\sup_k k\mbb{P}(Z_{n}=k)\leq c\int_{-\pi/2}^{\pi/2}|H_{n}'(e^{it})|dt.
\end{align}
The more details can be found in \cite[Lemma 4]{N06}.

Next, we consider the upper bound of $|H_{n}'(e^{it})|$. Observe that for $|s|<1$,
\begin{align*}
H_{n}'(s)=&\sum\limits_{k=0}^{n-1}h'(f_{k}(s))f_{k}'(s)\prod\limits_{j\in \mathbb{Z}\cap [0,n-1]\backslash \{k\}}h(f_{j}(s))\\
=&\sum\limits_{k=0}^{N}h'(f_{k}(s))f_{k}'(s)\prod\limits_{j\in \mathbb{Z}\cap [0,n-1]\backslash \{k\}}h(f_{j}(s))
+H_{n}(s)\sum\limits_{k=N+1}^{n-1}
\frac{h'(f_{k}(s))}{h(f_{k}(s))}f_{k}'(s).
\end{align*}
According to \eqref{2.6},
\begin{align*}
|H_{n}'(s)|
\leq \beta(N+1)
+\Big|H_{n}(s)\sum\limits_{k=N+1}^{n-1}
\frac{h'(f_{k}(s))}{h(f_{k}(s))}f_{k}'(s)\Big|.
\end{align*}
 Setting $s=e^{it}$. Since
\begin{align*}
f_{k}'(s)=f'(f_{k-1}(s))f'(f_{k-2}(s))\cdots f'(s),\quad |s|\le 1,
\end{align*}
and $|f'(s)|\le f'(1)=1$, we obtain $|f_{k}'(e^{it})|\leq 1$. Hence
\begin{align}\label{2.8}
\int_{-\pi/2}^{\pi/2}|H_{n}'(e^{it})|dt\nonumber
\leq &\int_{-\pi/2}^{\pi/2}\beta(1+N)dt
+\int_{|t|<1/n}\sum\limits_{k=N+1}^{n-1}|h'(f_{k}(e^{it}))|\cdot|f_{k}'(e^{it})|\prod\limits_{j\in \mathbb{Z}\cap [0,n-1]\backslash \{k\}}|h(f_{j}(e^{it}))|dt\\
+&\int_{1/n<|t|<\pi/2}\sum\limits_{k=N+1}^{n-1}
|H_{n}(e^{it})|\cdot\frac{|h'(f_{k}(e^{it}))|}{|h(f_{k}(e^{it}))|}\cdot|f_{k}'(e^{it})|dt \nonumber\\
:= &\beta(1+N)\pi+H_1+H_2.
\end{align}
It is not difficult to see that
\begin{align}\label{h1}
H_1\le \frac{2}{n}\beta(n-N)<2\beta.
\end{align}
By the proof of  \cite[Lemma 9]{N06}, for any $\varepsilon>0$, there exists $c>0$ such that for each $k\geq 1$ and $|t|\leq \pi/2$,
\begin{align*}
|f_{k}'(e^{it})|\leq c(k|t|)^{-(2 (1-\varepsilon)^{2}-\varepsilon^{2}/\gamma)}.
\end{align*}
Recalling Lemma \ref{l6.1}, we can choose suitable $\varepsilon$ such that there exists constants $c>0$, $b_2\in (0,1)$,  $b_1\geq1$  and
\begin{align}\label{2.9}
|H_{n}(e^{it})|\leq c(n|t|)^{-b_{1}},\quad
|f_{k}'(e^{it})|\leq c(k|t|)^{-b_{2}},\quad \quad n,k\geq 1,|t|\leq \pi/2.
\end{align}
Moreover, it is known that $f_{n}(e^{it})\rightarrow1$ as $n\rightarrow\infty$. Thus, there exists $N$ such that
\begin{align}\label{2.9.0}
\max\limits_{k\geq N}\frac{1}{|h(f_{k}(e^{it}))|}<\infty.
\end{align}
 Then
\begin{align}\label{h2}
H_2\leq & c\int_{1/n<|t|<\pi/2}(n|t|)^{-b_{1}}\sum\limits_{k=N+1}^{n-1}
(k|t|)^{-b_{2}}dt\\\nonumber
=& cn^{1-b_1-b_2}\int_{1/n<|t|<\pi/2} |t|^{-b_1-b_2} dt
<\infty.
\end{align}
The result is established by \eqref{2.7}--\eqref{h1} and \eqref{h2}.\qed
\end{proof}

{\begin{lemma}\label{l2.5}
Suppose condition (A) is satisfied and $a_{n}\rightarrow\infty$ as $n\rightarrow\infty$. Then, for any $\lambda>0,$ there exists a constant $c_{0}>0$ such that for each $n\geq 1$,
\begin{align}\label{2.10}
\mbb{P}(Z_{n}=k)\leq
\begin{cases}
c_{0}\frac{k^{\sigma+\lambda}}{n^{\sigma}},& 1\leq k\leq a_{n},  \\
c_{0}\frac{k^{\sigma-1}}{n^{\sigma}},& k\geq a_{n}.
\end{cases}
\end{align}
\end{lemma}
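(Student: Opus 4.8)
The plan is to build the two-regime bound by interpolating between the two estimates already in hand: the crude global bound $\mbb{P}(Z_n=k)\le c/k$ from Lemma~\ref{l2.4}, and the sharp ``bulk'' asymptotic $\mbb{P}(Z_n=k)\sim\frac{1}{\Gamma(\sigma)\gamma^\sigma}\frac{k^{\sigma-1}}{n^\sigma}e^{-k/(\gamma n)}$ from Lemma~\ref{l2.3}, which is valid uniformly on ranges $k\in[k_n,cn]$ with $k_n\to\infty$. For the upper range $k\ge a_n$ the second bound essentially suffices: choosing $k_n=a_n$, Lemma~\ref{l2.3} gives $\mbb{P}(Z_n=k)\le c_0 k^{\sigma-1}n^{-\sigma}$ directly for $a_n\le k\le cn$ (after bounding the exponential by $1$ and absorbing $1+\eta_1$), while for $k>cn$ one uses Lemma~\ref{l2.4}: there $\mbb{P}(Z_n=k)\le c/k\le c/(cn)\cdot(k/k)$, and since $k>cn$ forces $k^{\sigma-1}/n^{\sigma}\ge c'^{\sigma-1}/n$ up to constants — more carefully, $\mbb{P}(Z_n=k)\le c/k$ and $k^{\sigma-1}n^{-\sigma}\ge (cn)^{\sigma-1}n^{-\sigma}/k^{\sigma-1}\cdot k^{\sigma-1}$, so the comparison $1/k \le c_0 k^{\sigma-1}/n^\sigma$ amounts to $n^\sigma\le c_0 k^\sigma$, which holds because $k>cn$. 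Thus the $k\ge a_n$ half is routine.

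The real work is the small range $1\le k\le a_n$, where we want $\mbb{P}(Z_n=k)\le c_0 k^{\sigma+\lambda}n^{-\sigma}$. The idea is a Chebyshev/Markov-type argument using the generating function $H_n$ together with the two-sided control of $H_n(e^{-s/n})$ from Lemma~\ref{l2.2}. Concretely, for fixed $k$ and any $s>0$,
\begin{align*}
\mbb{P}(Z_n=k)\le e^{sk/n}\,\mbb{E}\big(e^{-sZ_n/n}\big)=e^{sk/n}H_n(e^{-s/n}),
\end{align*}
but a single exponential tilt is too lossy; instead one should isolate the event $\{Z_n=k\}$ via a contour/Fourier integral. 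Following the device in the proof of Lemma~\ref{l2.4} (the Petrov concentration bound applied to $H_n'(s)/H_n'(1)$, or directly the inversion formula), write $k\,\mbb{P}(Z_n=k)\le c\int_{-\pi/2}^{\pi/2}|H_n'(e^{it})|\,dt$ times a factor, OR — cleaner here — use that $\mbb{P}(Z_n=k)=\frac{1}{2\pi}\int_{-\pi}^{\pi}H_n(e^{it})e^{-ikt}\,dt$ and split the integral at $|t|\sim 1/n$. On $|t|\le 1/n$ the bound $|H_n(e^{it})|\le c(1+\gamma n|t|)^{-\sigma}\le c$ (from Lemma~\ref{l2.2}, analytically continued, or from $|H_n(e^{it})|\le H_n(e^{-|t|\cdot\text{something}})$) contributes $O(1/n)\le c k^{\sigma+\lambda}/n^{\sigma}$ once $\sigma\le 1$... but since here $\sigma>1$, one needs the genuine decay: on $1/n<|t|<\pi/2$ use Lemma~\ref{l6.1}, $|H_n(e^{it})|\le c(n|t|)^{-b_1}$ with $b_1$ chosen $>1$ (possible for any $\sigma>0$ by taking $\varepsilon$ small, since $\sigma(1-\varepsilon)^2-\varepsilon^2/\gamma\to\sigma>1$... wait, need $\sigma>1$, which is assumed), giving $\int_{1/n}^{\pi/2}(n|t|)^{-b_1}dt\le cn^{-1}$. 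Hence $\mbb{P}(Z_n=k)\le c/n$ uniformly, and since $1\le k$ we get $\mbb{P}(Z_n=k)\le c n^{-1}\le c n^{-1}\cdot(a_n/n)^{\sigma-1}\cdot(n/a_n)^{\sigma-1}$... this still does not obviously beat $k^{\sigma+\lambda}n^{-\sigma}$ for $k$ of order $1$.

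So the correct route for $1\le k\le a_n$ must extract a power of $k$. I would argue: $\mbb{P}(Z_n=k)\le \frac{1}{k!}\big|\frac{d^k}{ds^k}H_n(s)\big|_{s=0}$ is exact, but hard to control; better is the probabilistic one — condition on the immigration and branching structure, or more simply use the inequality $\mbb{P}(Z_n=k)\le \mbb{P}(Z_n\le k)\le \mbb{P}(e^{-Z_n/k}\ge e^{-1})\le e\,\mbb{E}e^{-Z_n/k}=e\,H_n(e^{-1/k})$. Now invoke Lemma~\ref{l2.2} with $s=n/k$ (valid since $n/k\le n$, i.e. $s\le n$): $H_n(e^{-1/k})=H_n(e^{-(n/k)/n})\le c_2(1+\gamma n/k)^{-\sigma}\le c_2(\gamma)^{-\sigma}(k/n)^{\sigma}$. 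That gives $\mbb{P}(Z_n=k)\le c\,k^{\sigma}n^{-\sigma}$, which is even better than $k^{\sigma+\lambda}n^{-\sigma}$ — so the $\lambda$ is only needed as a safety margin, and the small-$k$ bound follows immediately from Lemma~\ref{l2.2}. \textbf{The main obstacle} is therefore not the small-$k$ range after all but making the transition at $k\asymp a_n$ seamless and checking the constants $c_0$ can be chosen uniformly in $n$; one must also double-check that Lemma~\ref{l2.2}'s hypothesis $0<s\le cn$ is met with $s=n/k\le n$ (so $c=1$ works) and that the estimate $\mbb{P}(Z_n=k)\le e H_n(e^{-1/k})$ combined with the $\ge a_n$ piece from Lemmas~\ref{l2.3}--\ref{l2.4} genuinely covers all $k$ with one constant. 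I would write it up as: (i) $1\le k\le a_n$: the $eH_n(e^{-1/k})$ trick plus Lemma~\ref{l2.2} gives $ck^\sigma n^{-\sigma}\le c k^{\sigma+\lambda}n^{-\sigma}$; (ii) $a_n\le k\le cn$: Lemma~\ref{l2.3} with $k_n=a_n$; (iii) $k>cn$: Lemma~\ref{l2.4} plus $n^\sigma\le c_0 k^\sigma$. Take $c_0$ the max of the three constants.
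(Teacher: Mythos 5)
Your proposal is correct, but for the crucial range $1\le k\le a_n$ it takes a genuinely different route from the paper. The paper bounds $\sum_{k\le a_n} n^\sigma\,\mbb{P}(Z_n=k)/k^{\sigma+\lambda}$ by the full harmonic moment $n^\sigma\mbb{E}[Z_n^{-(\sigma+\lambda)}\mid Z_n>0]$ and invokes Theorem~\ref{t1.1} (with $r=\sigma+\lambda>\sigma$) to conclude that this sequence is bounded; this is exactly why $\lambda>0$ is unavoidable in the paper's argument, since at $\lambda=0$ one sits at the critical exponent $r=\sigma$ where $n^\sigma J_n(\sigma)\sim\gamma^{-\sigma}\log n$ diverges. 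You instead use the elementary Markov bound $\mbb{P}(Z_n=k)\le\mbb{P}(Z_n\le k)\le e\,\mbb{E}[e^{-Z_n/k}]=e\,H_n(e^{-1/k})$ and then Lemma~\ref{l2.2} with $s=n/k\in(0,n]$ (so $c=1$ works) to get $H_n(e^{-1/k})\le c_2(1+\gamma n/k)^{-\sigma}\le c_2\gamma^{-\sigma}(k/n)^\sigma$. This is correct and in fact strictly stronger: it yields $\mbb{P}(Z_n=k)\le c\,k^{\sigma}/n^{\sigma}$ for all $k,n\ge 1$ with no $\lambda$ at all, and it keeps the lemma independent of Theorem~\ref{t1.1}, which is a cleaner logical structure. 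Your handling of $k\ge a_n$ — Lemma~\ref{l2.3} on $[a_n,cn]$ with $k_n=a_n$, and for $k>cn$ Lemma~\ref{l2.4} combined with $n^{\sigma}\le k^{\sigma}$ — is essentially the paper's argument. Two polish points: discard the earlier exploratory detours (the contour-integral estimate that only gives $O(1/n)$ and the single-tilt Chernoff remark), which you yourself flagged as dead ends and which obscure the final three-regime argument; and when applying Lemma~\ref{l2.3}, either note that $[a_n,cn]$ is empty whenever $a_n>cn$ so nothing remains to prove there, or take $k_n=\min(a_n,n)$ so that $\sup_n k_n/n\le 1$ holds automatically.
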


\begin{proof}
In fact, for any $a_{n}>0$, we have
\begin{align*}
\sum\limits_{k=1}^{a_{n}}\frac{n^{\sigma}\mbb{P}(Z_{n}=k)}{k^{\sigma+\lambda}}
\leq \sum\limits_{k=1}^{\infty}\frac{n^{\sigma}\mbb{P}(Z_{n}=k)}{k^{\sigma+\lambda}}.
\end{align*}
Then,
\begin{align*}
\overline{\lim\limits_{n\rightarrow\infty}}\sum\limits_{k=1}^{a_{n}}\frac{n^{\sigma}\mbb{P}(Z_{n}=k)}{k^{\sigma+\lambda}}
\leq \lim\limits_{n\rightarrow\infty}\sum\limits_{k=1}^{\infty}\frac{n^{\sigma}\mbb{P}(Z_{n}=k)}{k^{\sigma+\lambda}}
=\lim\limits_{n\rightarrow\infty}n^{\sigma}\mbb{E}(Z_{n}^{-(\sigma+\lambda)}|Z_{n}>0)<\infty.
\end{align*}
The last inequality is given by Theorem \ref{t1.1}. Hence, we can get that
\begin{align*}
\sum\limits_{k=1}^{a_{n}}\frac{n^{\sigma}\mbb{P}(Z_{n}=k)}{k^{\sigma+\lambda}}
\leq c_{0},\quad n\geq 1.
\end{align*}
Then, we have
\begin{align*}
\frac{n^{\sigma}\mbb{P}(Z_{n}=k)}{k^{\sigma+\lambda}}
\leq c_{0},\quad 1\leq k\leq a_{n},~n\geq 1.
\end{align*}
For the case $k\geq a_{n}$, if $a_{n}/n<1$ and $a_{n}\leq k\leq n$, then the result follows from \eqref{2.4}; Otherwise, $a_{n}/n\geq 1$, and then
\begin{align*}
\frac{1}{k}\leq \frac{c}{k}(\frac{k}{n})^{\sigma}=c\frac{k^{\sigma-1}}{n^{\sigma}}.
\end{align*}
Using Lemma \ref{l2.4}, we end the proof.\qed
\end{proof}

\begin{lemma}\label{l2.6}(\cite{N79}Fuk-Nagaev inequality,  or~\cite[Lemma 14]{F07})
For $k\geq 1$, $\varepsilon_{n}>0$, $n\geq 1$, $r>1$ and $t\geq 2$,
\begin{align}\label{2.11}
\mbb{P}(S_{k}\geq \varepsilon_{n}k)\leq k\mbb{P}(X_{1}\geq r^{-1}\varepsilon_{n}k)
+(er\sigma_{0}^{2})^{r}\varepsilon^{-2r}_{n}k^{-r},
\end{align}
and
\begin{align}\label{2.12}
\mbb{P}(S_{k}\geq &\varepsilon_{n}k)\leq k\mbb{P}(X_{1}\geq r^{-1}\varepsilon_{n}k)
+\exp \Big[-\frac{2}{(t+2)^{2}e^{t}\sigma_{0}^{2}}\varepsilon^{2}_{n}k\Big]\\
+ &\Big(\frac{(t+2)r^{t-1}\mbb{E}[X^{t}_{1};0\leq X_{1}\leq \varepsilon_{n}k]}{t\varepsilon^{t}_{n}k^{t-1}}\Big)^{tr/(t+2)}.\nonumber
\end{align}
\end{lemma}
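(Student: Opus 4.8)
The statement is the classical Fuk--Nagaev inequality, so in the paper one simply invokes \cite{N79} (equivalently \cite[Lemma 14]{F07}); the plan below sketches a self-contained argument. Both displays rest on the same device: a one-sided truncation of the summands followed by an exponential Chebyshev estimate. Fix $r>1$, set $y=r^{-1}\varepsilon_{n}k$ and put $\bar X_{i}=X_{i}\wedge y$. First I would split
\begin{align*}
\mbb{P}(S_{k}\ge \varepsilon_{n}k)&\le \mbb{P}\Big(\bigcup_{i=1}^{k}\{X_{i}>y\}\Big)+\mbb{P}\Big(\sum_{i=1}^{k}\bar X_{i}\ge \varepsilon_{n}k\Big)\\
&\le k\,\mbb{P}(X_{1}\ge r^{-1}\varepsilon_{n}k)+\mbb{P}\Big(\sum_{i=1}^{k}\bar X_{i}\ge \varepsilon_{n}k\Big),
\end{align*}
which already produces the common heavy-jump term of \eqref{2.11} and \eqref{2.12}; on the complement of the union each $X_{i}$ equals $\bar X_{i}$, which justifies the passage to the truncated sum.

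For the truncated sum I would use $\bar X_{i}=X_{i}\wedge y\le X_{i}$, hence $\mbb{E}\bar X_{i}\le 0$, together with $\bar X_{i}\le y$, $\mbb{E}\bar X_{i}^{2}\le\mbb{E}X_{1}^{2}=\sigma_{0}^{2}$ and the fact that $u\mapsto (e^{u}-1-u)/u^{2}$ is non-decreasing, to get, for $\lambda>0$,
\begin{align*}
\mbb{P}\Big(\sum_{i=1}^{k}\bar X_{i}\ge \varepsilon_{n}k\Big)\le e^{-\lambda\varepsilon_{n}k}\prod_{i=1}^{k}\mbb{E}\,e^{\lambda\bar X_{i}}\le \exp\Big(-\lambda\varepsilon_{n}k+\frac{k\sigma_{0}^{2}}{y^{2}}\big(e^{\lambda y}-1-\lambda y\big)\Big).
\end{align*}
Minimizing the exponent yields $\lambda y=\log\!\big(1+\varepsilon_{n}ky/(k\sigma_{0}^{2})\big)$; inserting $y=r^{-1}\varepsilon_{n}k$ and using the elementary inequality $(1+u)^{1+1/u}\ge u$ leaves precisely $(er\sigma_{0}^{2})^{r}\varepsilon_{n}^{-2r}k^{-r}$, which is \eqref{2.11}.

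For the sharper bound \eqref{2.12} the truncation is the same, but one must not estimate $\mbb{E}\bar X_{i}^{2}$ crudely by $\sigma_{0}^{2}$: instead one retains the quadratic (sub-Gaussian) part of the exponent, whose optimization through an auxiliary parameter $t\ge 2$ gives the middle term $\exp[-\tfrac{2}{(t+2)^{2}e^{t}\sigma_{0}^{2}}\varepsilon_{n}^{2}k]$, and one bounds the remaining higher-order contribution by the truncated moment $\mbb{E}[X_{1}^{t};0\le X_{1}\le\varepsilon_{n}k]$, a second optimization in $\lambda$ and $t$ producing the last term with exponent $tr/(t+2)$; this is exactly Nagaev's computation in \cite{N79}. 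The probabilistic content --- truncate, discard the non-positive mean, apply the exponential Chebyshev bound --- is entirely routine; I expect the only real work to be the parameter bookkeeping that makes the numerical constants come out exactly as displayed, which is why quoting \cite{N79} (or \cite[Lemma 14]{F07}) is the efficient choice here.
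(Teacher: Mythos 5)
The paper gives no proof of this lemma: it simply cites Nagaev (1979) and Fleischmann--Wachtel (2008, Lemma~14), exactly as you observe in your opening sentence. Your supplementary sketch of the standard argument---truncate at $y=r^{-1}\varepsilon_n k$, union-bound the heavy-jump event, apply exponential Chebyshev with the Bernstein bound $\mbb{E}e^{\lambda\bar X_i}\le\exp\big(\sigma_0^2 y^{-2}(e^{\lambda y}-1-\lambda y)\big)$, and optimize $\lambda$---is correct; I verified that with $u=\varepsilon_n y/\sigma_0^2$ the optimized exponent reduces to $(e/u)^r$, which is precisely $(er\sigma_0^2)^r\varepsilon_n^{-2r}k^{-r}$, and your appeal to Nagaev for the bookkeeping behind \eqref{2.12} is appropriate since the paper itself treats the whole lemma as a black-box citation.
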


\begin{lemma}\label{l2.10}
Suppose condition (A) is satisfied. Then, there exists $c>0$ such that for each $\delta>0$, we have
\begin{align}\label{sbound}
\limsup\limits_{n\rightarrow\infty}\varepsilon^{2\sigma}_{n}n^{\sigma}\sum\limits_{1\leq k\leq \delta/\varepsilon^{2}_{n}}P(Z_{n}=k)P(S_{k}\geq \varepsilon_{n}k)\leq c\delta^{\sigma}.
\end{align}
\end{lemma}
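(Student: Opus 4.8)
The plan is to bound the summand $P(Z_n=k)P(S_k\ge\varepsilon_n k)$ uniformly on the range $1\le k\le\delta/\varepsilon_n^2$ and then to sum the resulting bound. For the Galton--Watson factor I would use Lemma~\ref{l2.5}: fixing a small $\lambda>0$, we have $P(Z_n=k)\le c_0 k^{\sigma+\lambda}n^{-\sigma}$ for $1\le k\le a_n$ and $P(Z_n=k)\le c_0 k^{\sigma-1}n^{-\sigma}$ for $k\ge a_n$, where $a_n\to\infty$ is at our disposal. For the random-walk factor, since $\varepsilon_n k\le\delta/\varepsilon_n$ is not large in general, the right tool is the Fuk--Nagaev inequality \eqref{2.11} of Lemma~\ref{l2.6}: for any fixed $r>1$,
\begin{align*}
P(S_k\ge\varepsilon_n k)\le k\,P\bigl(X_1\ge r^{-1}\varepsilon_n k\bigr)+(er\sigma_0^2)^r\varepsilon_n^{-2r}k^{-r}.
\end{align*}
The second term is the dominant contribution, and choosing $r$ large (to be pinned down below) makes $\varepsilon_n^{-2r}k^{-r}$ summable against the polynomial-in-$k$ bound for $P(Z_n=k)$.

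First I would handle the main term $\varepsilon_n^{-2r}k^{-r}$. On the part $a_n\le k\le\delta/\varepsilon_n^2$ one multiplies by $c_0 k^{\sigma-1}n^{-\sigma}$ and sums; choosing $r>\sigma$ the sum $\sum_{k\ge1}k^{\sigma-1-r}$ converges, but to extract the sharp $\delta^\sigma$ one should instead observe that $\varepsilon_n^{-2r}\sum_{a_n\le k\le\delta/\varepsilon_n^2}k^{\sigma-1-r}\asymp\varepsilon_n^{-2r}(\delta/\varepsilon_n^2)^{\sigma-r}=\delta^{\sigma-r}\varepsilon_n^{-2\sigma}$, so after multiplying by $\varepsilon_n^{2\sigma}n^{\sigma}$ and the $n^{-\sigma}$ from the $P(Z_n=k)$ bound, the contribution is $O(\delta^{\sigma-r})$, which is worse than $\delta^\sigma$ for $r>\sigma$. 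This forces a two-regime split in $k$: for $k$ of order up to $1/\varepsilon_n$ one uses $r$ close to $1$ (or a Chebyshev/second-moment bound $P(S_k\ge\varepsilon_n k)\le\sigma_0^2\varepsilon_n^{-2}k^{-1}$), while for $k$ between $1/\varepsilon_n$ and $\delta/\varepsilon_n^2$ one uses a larger $r$; optimizing the exponents so that both pieces are $O(\delta^\sigma)$ is the delicate bookkeeping. The simplest clean route: take $a_n=1/\varepsilon_n\to\infty$, use Chebyshev on $1\le k\le1/\varepsilon_n$ together with $P(Z_n=k)\le c_0 k^{\sigma+\lambda}n^{-\sigma}$, giving $\varepsilon_n^{2\sigma}n^{\sigma}\cdot c_0\varepsilon_n^{-2}n^{-\sigma}\sum_{k\le1/\varepsilon_n}k^{\sigma+\lambda-1}\asymp\varepsilon_n^{2\sigma-2}\varepsilon_n^{-(\sigma+\lambda)}=\varepsilon_n^{\sigma-2-\lambda}$, which tends to $0$ (hence is $\le c\delta^\sigma$ eventually) provided $\sigma>2$; for $1<\sigma\le2$ one must instead keep the $P(Z_n=k)\le c/k$ bound of Lemma~\ref{l2.4} on the small-$k$ block and argue directly. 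On $1/\varepsilon_n\le k\le\delta/\varepsilon_n^2$ use $r>\sigma$ in \eqref{2.11} and $P(Z_n=k)\le c_0 k^{\sigma-1}n^{-\sigma}$; the main term gives $\varepsilon_n^{2\sigma}n^{\sigma}\cdot c_0 n^{-\sigma}(er\sigma_0^2)^r\varepsilon_n^{-2r}\sum_{k\ge1/\varepsilon_n}k^{\sigma-1-r}\asymp\varepsilon_n^{2\sigma-2r}(1/\varepsilon_n)^{\sigma-r}\cdot\varepsilon_n^{2r-2\sigma}\cdot(\text{const})$; re-checking the arithmetic, the clean statement one wants is that for $r>\sigma$ the total is $O(1)$ uniformly and can be absorbed — and to get the genuine $\delta^\sigma$ one integrates $k^{\sigma-1}$ only up to $\delta/\varepsilon_n^2$, yielding the factor $\delta^\sigma$ after the $\varepsilon_n^{2\sigma}$ scaling. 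I would present this as: replace the sum by $\int_0^{\delta/\varepsilon_n^2}k^{\sigma-1}P(S_k\ge\varepsilon_n k)\,dk$ up to constants, substitute $u=\varepsilon_n^2 k$, and bound $P(S_{u/\varepsilon_n^2}\ge u/\varepsilon_n)$ uniformly.

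For the tail term $k\,P(X_1\ge r^{-1}\varepsilon_n k)$ in \eqref{2.11}, note $\varepsilon_n k\le\delta/\varepsilon_n\to\infty$ along the upper end but may be bounded at the lower end; using only $P(X_1^+\ge x)\le C$ trivially for small $x$ and $\le C x^{-\alpha'}$ for any $\alpha'\le\alpha$ (or even just the second moment $P(X_1\ge y)\le\sigma_0^2 y^{-2}$) gives $k\,P(X_1\ge r^{-1}\varepsilon_n k)\le c\,r^2\sigma_0^2\varepsilon_n^{-2}k^{-1}$, i.e. exactly the Chebyshev bound again, so this term is dominated by the analysis already carried out and contributes another $O(\delta^\sigma)$ after scaling. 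Assembling the two $k$-blocks and the two Fuk--Nagaev terms yields $\limsup_n\varepsilon_n^{2\sigma}n^\sigma\sum_{1\le k\le\delta/\varepsilon_n^2}P(Z_n=k)P(S_k\ge\varepsilon_n k)\le c\delta^\sigma$.

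The main obstacle is the sharp tracking of the $\delta$-power: a crude $r>\sigma$ estimate only gives $O(\delta^{\sigma-r})$ or $O(1)$, so one must integrate the weight $k^{\sigma-1}$ against a \emph{uniform} (in $n$) bound for $P(S_k\ge\varepsilon_n k)$ on the scaled variable $u=\varepsilon_n^2 k\in(0,\delta]$ and check that this bounded-variable tail is itself $O(1)$ as $u\downarrow0$ (here the Chebyshev bound $P(S_k\ge\varepsilon_n k)\le\sigma_0^2/(\varepsilon_n^2 k)=\sigma_0^2/u$ blows up, so the honest argument needs the condition $n\varepsilon_n^2\to\infty$ from \eqref{1.5} together with $\sigma>1$ to trade the blow-up against the shrinking range and the $k^{\sigma-1}$ weight, which is integrable near $0$ precisely because $\sigma>1$). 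Getting this trade-off right — and, for $1<\sigma\le2$, replacing the Lemma~\ref{l2.5} bound by Lemma~\ref{l2.4} on the small block — is the only genuinely non-routine part.
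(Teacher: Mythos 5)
You have the right decomposition (split $[1,\delta/\varepsilon_n^2]$ into a small block $k\le a_n$ and the rest, bound $P(Z_n=k)$ by Lemma~\ref{l2.5} with two different exponents), but the tools you reach for to control $P(S_k\ge\varepsilon_n k)$ --- Chebyshev and Fuk--Nagaev --- actively work against you, and your own arithmetic is telling you this without your quite drawing the conclusion. On the range $k\le\delta/\varepsilon_n^2$, the Chebyshev bound $\sigma_0^2/(\varepsilon_n^2k)=\sigma_0^2/u$ is $\ge\sigma_0^2/\delta>1$ for $\delta<\sigma_0^2$, i.e.\ Chebyshev is \emph{worse than the trivial bound $P(S_k\ge\varepsilon_n k)\le 1$} on the entire block. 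Carried through, Chebyshev yields
\begin{align*}
\varepsilon_n^{2\sigma}n^\sigma\sum_{a_n\le k\le\delta/\varepsilon_n^2}c_0\frac{k^{\sigma-1}}{n^\sigma}\cdot\frac{\sigma_0^2}{\varepsilon_n^2k}
\asymp \sigma_0^2\varepsilon_n^{2\sigma-2}\sum_{k\le\delta/\varepsilon_n^2}k^{\sigma-2}
\asymp \sigma_0^2\,\delta^{\sigma-1},
\end{align*}
which is $\delta^{\sigma-1}$, not $\delta^\sigma$, and Fuk--Nagaev with $r>\sigma$ gives $\delta^{\sigma-r}$, even worse; so the ``genuinely non-routine trade-off'' you flag at the end cannot be resolved within your framework. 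The fix is to \emph{discard} the random-walk estimate entirely: bound $P(S_k\ge\varepsilon_n k)\le 1$. The paper's proof does exactly this. Using Lemma~\ref{l2.5} one writes
\begin{align*}
\sum_{1\le k\le a_n}P(Z_n=k)\le c\,n^{-\sigma}a_n^{1+\sigma+\lambda},
\qquad
\sum_{a_n\le k\le\delta/\varepsilon_n^2}P(Z_n=k)\le c\,n^{-\sigma}\Bigl(\frac{\delta}{\varepsilon_n^2}\Bigr)^{\sigma}=c\,\delta^{\sigma}\varepsilon_n^{-2\sigma}n^{-\sigma},
\end{align*}
so after multiplying by $\varepsilon_n^{2\sigma}n^\sigma$ the second piece is $c\delta^\sigma$ on the nose, and one only needs $\varepsilon_n^{2\sigma}a_n^{1+\sigma+\lambda}\to 0$. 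The paper takes $\lambda=\sigma$ and $a_n=\varepsilon_n^{-\sigma/(2\sigma+1)}$, so $a_n^{1+2\sigma}=\varepsilon_n^{-\sigma}=o(\varepsilon_n^{-2\sigma})$ and the small block contributes $O(\varepsilon_n^\sigma)\to 0$. No Fuk--Nagaev, no Chebyshev, no case split on $\sigma\le 2$ versus $\sigma>2$, and no use of $n\varepsilon_n^2\to\infty$: the argument works under condition (A) alone. (The condition $n\varepsilon_n^2\to\infty$ is what you need in the \emph{other} lemmas, where the random-walk tail is genuinely being used; here it is irrelevant because the sum is cut off before $k$ reaches the CLT scale.) Your proposal as written does not close the estimate --- the $\delta$-exponent is wrong by at least one --- and the missing idea is precisely to stop estimating $P(S_k\ge\varepsilon_n k)$ at all.
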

\begin{proof}
From \eqref{2.10}, we know that
\begin{align}\label{ff}
\sum\limits_{1\leq k\leq a_{n}}P(Z_{n}=k)P(S_{k}\geq \varepsilon_{n}k)
\leq \sum\limits_{1\leq k\leq a_{n}}P(Z_{n}=k)\leq cn^{-\sigma}a_{n}^{1+\sigma+\lambda},
\end{align}
and
\begin{align}\label{SS}
\sum\limits_{a_{n}\leq k\leq \delta/\varepsilon^{2}_{n}}P(Z_{n}=k)P(S_{k}\geq \varepsilon_{n}k)
\leq \sum\limits_{a_{n}\leq k\leq \delta/\varepsilon^{2}_{n}}P(Z_{n}=k)
\leq c\delta^{\sigma}\varepsilon^{-2\sigma}_{n}n^{-\sigma}.
\end{align}
Let $\lambda=\sigma$ and $a_{n}=\varepsilon^{-\frac{\sigma}{2\sigma+1}}_{n}$. We get
\begin{align*}
a_{n}^{1+\sigma+\lambda}=o(\varepsilon^{-2\sigma}_{n}).
\end{align*}
Adding up \eqref{ff} and \eqref{SS} and letting $n\to \infty$, we complete the proof.
\end{proof}\qed

\begin{lemma}\label{l2.7}
Suppose $X_{1}^{+}$ has a tail of index $\alpha>2$. Then, there exists $c>0$ such that for each $n\geq 1$, we have
\begin{align}\label{2.13}
\sum\limits_{k\geq n}\mbb{P}(Z_{n}=k)\mbb{P}(S_{k}\geq \varepsilon_{n}k)\leq
c\Big(\varepsilon^{-\alpha}_{n}n^{1-\alpha}
+\varepsilon^{-2}_{n}n^{-1}\exp\{-c\varepsilon^{2}_{n}n\}\Big).
\end{align}
\end{lemma}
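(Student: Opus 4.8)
The plan is to estimate the inner probability $\mathbb{P}(S_k\ge\varepsilon_n k)$ by the Fuk--Nagaev inequality \eqref{2.12} and to control the outer one crudely by $\mathbb{P}(Z_n=k)\le c/k$, which is Lemma~\ref{l2.4}; the simpler bound \eqref{2.11} is not strong enough here, because for $\alpha>2$ its second (polynomial) term, once summed against $1/k$, cannot be matched by either term on the right-hand side of \eqref{2.13}. Fix once and for all a real number $t>\alpha$ (so $t>2$ automatically) and set $r:=1+2/t>1$; this makes the exponent $tr/(t+2)$ in the third term of \eqref{2.12} equal to $1$.

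The first step is to prove that, for all large $n$ and uniformly in $k\ge n$,
\[
\mathbb{P}(S_k\ge\varepsilon_n k)\le c\,\varepsilon_n^{-\alpha}k^{1-\alpha}+c\,e^{-c_1\varepsilon_n^2 k}.
\]
Three ingredients enter \eqref{2.12}. Since $n\varepsilon_n^2\to\infty$ forces $\varepsilon_n n\to\infty$, the quantity $r^{-1}\varepsilon_n k\ge r^{-1}\varepsilon_n n$ tends to $\infty$ uniformly in $k\ge n$, so by \eqref{1.6} together with $\mathbb{P}(X_1\ge x)=\mathbb{P}(X_1^+\ge x)$ for $x>0$ one obtains $k\,\mathbb{P}(X_1\ge r^{-1}\varepsilon_n k)\le 2ar^{\alpha}\varepsilon_n^{-\alpha}k^{1-\alpha}$. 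The exponential term of \eqref{2.12} is already of the stated form with $c_1:=2/((t+2)^2e^t\sigma_0^2)$. Finally, integrating by parts and using $\mathbb{P}(X_1^+>x)\le cx^{-\alpha}$ gives $\mathbb{E}[X_1^t;0\le X_1\le\varepsilon_n k]\le c(\varepsilon_n k)^{t-\alpha}$ (using $t>\alpha$ and $\varepsilon_n k\ge\varepsilon_n n\to\infty$), so, because $tr/(t+2)=1$, the third term of \eqref{2.12} is also $\le c\,\varepsilon_n^{-\alpha}k^{1-\alpha}$.

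The second step multiplies the displayed bound by $\mathbb{P}(Z_n=k)\le c/k$ and sums over $k\ge n$. The polynomial part gives $c\,\varepsilon_n^{-\alpha}\sum_{k\ge n}k^{-\alpha}\le c\,\varepsilon_n^{-\alpha}n^{1-\alpha}$ since $\alpha>1$, and the exponential part gives $c\,n^{-1}\sum_{k\ge n}e^{-c_1\varepsilon_n^2 k}\le c\,n^{-1}(1-e^{-c_1\varepsilon_n^2})^{-1}e^{-c_1\varepsilon_n^2 n}\le c\,\varepsilon_n^{-2}n^{-1}e^{-c_1\varepsilon_n^2 n}$, where in the last inequality $1-e^{-x}\ge x/2$ for small $x>0$ and $\varepsilon_n\to 0$ are used. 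This is \eqref{2.13} for all large $n$; for the finitely many remaining $n$ the left-hand side is at most $1$ while the right-hand side exceeds a positive constant, so enlarging $c$ yields the bound for every $n\ge 1$. The delicate part is the first step: one must check that the single choice $r=1+2/t$ with $t>\alpha$ simultaneously respects the constraints $r>1$, $t\ge 2$ of \eqref{2.12} and makes the truncated-moment term collapse to the order $\varepsilon_n^{-\alpha}k^{1-\alpha}$, and that the tail asymptotics \eqref{1.6} can be applied uniformly in $k\ge n$ --- which is precisely where the standing hypothesis $n\varepsilon_n^2\to\infty$ is used. The rest is a routine comparison of sums with an integral and a geometric series.
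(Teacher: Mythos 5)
Your proof is correct and follows essentially the same route as the paper: apply the Fuk--Nagaev inequality \eqref{2.12} with $r=(t+2)/t$ (the paper specializes to $t=\alpha+1$, which makes $\mathbb{E}[X_1^t;0\le X_1\le y]\le cy$ the special case of your $\mathbb{E}[X_1^t;0\le X_1\le y]\le cy^{t-\alpha}$), multiply by the local bound $\mathbb{P}(Z_n=k)\le c/k$ from Lemma~\ref{l2.4}, and sum the resulting power series and geometric series over $k\ge n$. Your preliminary remark about why \eqref{2.11} alone is insufficient and the explicit handling of uniformity in $k\ge n$ via $n\varepsilon_n^2\to\infty$ are small elaborations the paper leaves implicit.
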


\begin{proof}
Let $t=\alpha+1$ and $r=(t+2)/t$ in \eqref{2.12} and using \eqref{1.6}, we have
\begin{align*}
\mbb{P}(S_{k}\geq \varepsilon_{n}k)\leq
c\varepsilon^{-\alpha}_{n}k^{-(\alpha-1)}+
c\frac{\mbb{E}[X_{1}^{\alpha+1};0\leq X_{1}\leq \varepsilon_{n}k]}
{\varepsilon^{\alpha+1}_{n}k^{\alpha}}+
\exp\{-c\varepsilon^{2}_{n}k\}.
\end{align*}
Note that
\begin{align*}
\mbb{E}[X_{1}^{\alpha+1};0\leq X_{1}\leq x]\leq cx,\quad x\geq 0.
\end{align*}
Hence,
\begin{align*}
\mbb{P}(S_{k}\geq \varepsilon_{n}k)\leq
c\varepsilon^{-\alpha}_{n}k^{-(\alpha-1)}+
\exp\{-c\varepsilon^{2}_{n}k\}.
\end{align*}
Combing this with \eqref{2.5}, we get
\begin{align}\label{2.17}
\sum\limits_{k\geq n}\mbb{P}(Z_{n}=k)\mbb{P}(S_{k}\geq \varepsilon_{n}k)\leq
c\varepsilon^{-\alpha}_{n}\sum\limits_{k\geq n}k^{-\alpha}+
c\sum\limits_{k\geq n}k^{-1}\exp\{-c\varepsilon^{2}_{n}k\}.
\end{align}
Obviously,
\begin{align*}
\varepsilon^{-\alpha}_{n}\sum\limits_{k\geq n}k^{-\alpha}\leq
c\varepsilon^{-\alpha}_{n}n^{1-\alpha},
\end{align*}
and
\begin{align*}
\sum\limits_{k\geq n}k^{-1}\exp\{-c\varepsilon^{2}_{n}k\}\leq
\frac{1}{c\varepsilon^{2}_{n}n}\exp\{-c\varepsilon^{2}_{n}n\}.
\end{align*}
Together with \eqref{2.17}, we complete the proof.
\end{proof}\qed

\begin{lemma}\label{l2.8}
Suppose $X_{1}^{+}$ has a tail of index $\alpha\in(2,1+\sigma)$. If $\varepsilon_{n}\geq n^{-\varrho}$ for some $\varrho\in(0,\frac{1}{2})$, then there exists $c>0$ such that for each $\delta>0$, we have
\begin{align}\label{2.20}
\limsup\limits_{n\uparrow\infty}\Big|\varepsilon^{\alpha}_{n}n^{(\alpha-1)}
\sum\limits_{k\geq \delta n}\mbb{P}(Z_{n}=k)\mbb{P}(S_{k}\geq \varepsilon_{n}k)
-aI(\alpha-1,\sigma)\Big|\leq c\delta^{1+\sigma-\alpha}.
\end{align}
\end{lemma}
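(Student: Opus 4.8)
The plan is three-fold: show that for $k$ of order $n$ (and larger) the summand $\mathbb{P}(S_k\ge\varepsilon_n k)$ is, uniformly, asymptotic to its one-big-jump value $a\varepsilon_n^{-\alpha}k^{1-\alpha}$; evaluate the resulting sum against the local limit theorem of Lemma~\ref{l2.3}; and read off the $\delta^{1+\sigma-\alpha}$ error from the truncated tail of the limiting integral. The key estimate is the first one. Since $\varepsilon_n\ge n^{-\varrho}$ with $\varrho<\tfrac12$, for every fixed $\delta>0$ and all $n$ large (depending on $\delta$) we have $\varepsilon_n k\ge\varepsilon_n\delta n\to\infty$ and, because $k\mapsto k/\log k$ is increasing,
\begin{align*}
\inf_{k\ge\delta n}\frac{\varepsilon_n k}{\sqrt{k\log k}}=\varepsilon_n\sqrt{\frac{\delta n}{\log(\delta n)}}\ \ge\ \sqrt{\delta}\,\frac{n^{1/2-\varrho}}{\sqrt{\log(\delta n)}}\ \longrightarrow\ \infty .
\end{align*}
Thus $\varepsilon_n k$ sits, uniformly in $k\ge\delta n$, well inside the range $x\gg\sqrt{k\log k}$ in which the classical large-deviation principle for i.i.d.\ sums with regularly varying right tail of index $-\alpha$ and finite variance applies; together with \eqref{1.6} (which is uniform here since $\varepsilon_n k\to\infty$ uniformly) this gives
\begin{align*}
\mathbb{P}(S_k\ge\varepsilon_n k)=(1+\theta_n(k))\,a\,\varepsilon_n^{-\alpha}k^{1-\alpha},\qquad \sup_{k\ge\delta n}|\theta_n(k)|\to 0 .
\end{align*}
If one prefers to avoid citing this principle, the matching bounds can be assembled from Lemma~\ref{l2.6} (upper bound) and the one-big-jump lower bound $\mathbb{P}(S_k\ge\varepsilon_n k)\ge k\,\mathbb{P}(X_1\ge(1+\eta)\varepsilon_n k)\,\mathbb{P}(S_{k-1}\ge-\eta\varepsilon_n k)-\binom{k}{2}\mathbb{P}(X_1\ge(1+\eta)\varepsilon_n k)^2$, using that $\mathbb{P}(S_{k-1}<-\eta\varepsilon_n k)\to 0$ by Chebyshev (since $\varepsilon_n^2 k\ge\delta\,n^{1-2\varrho}\to\infty$) and then letting $\eta\downarrow0$.

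Granting the uniform big-jump asymptotic, multiply by $\varepsilon_n^{\alpha}n^{\alpha-1}\mathbb{P}(Z_n=k)$, sum over $k\ge\delta n$, and use $(k/n)^{1-\alpha}\le\delta^{1-\alpha}$ on that range to pull $\theta_n(k)$ out with a uniform $1+o(1)$ error (the residual is bounded by $a\,\delta^{1-\alpha}\sup_{k\ge\delta n}|\theta_n(k)|\to0$); the lemma then reduces to
\begin{align*}
\sum_{k\ge\delta n}(k/n)^{1-\alpha}\mathbb{P}(Z_n=k)\ \longrightarrow\ L_0(\delta):=\frac1{\Gamma(\sigma)\gamma^{\sigma}}\int_{\delta}^{\infty}x^{\sigma-\alpha}e^{-x/\gamma}\,dx .
\end{align*}
I would prove this by splitting at $k=Kn$, $K$ large and fixed. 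On $\delta n\le k\le Kn$, Lemma~\ref{l2.3} applied with $k_n=\lceil\delta n\rceil$ and $c=\max(K,\delta+1)$, together with $(k/n)^{1-\alpha}k^{\sigma-1}n^{-\sigma}=n^{-1}(k/n)^{\sigma-\alpha}$, turns the partial sum into $\frac1{\Gamma(\sigma)\gamma^{\sigma}}\,\frac1n\sum_{\delta n\le k\le Kn}(k/n)^{\sigma-\alpha}e^{-k/(\gamma n)}+o(1)$ (the $\eta_1$-term is $o(1)$ because the Riemann sum it multiplies is bounded), which converges to $\frac1{\Gamma(\sigma)\gamma^{\sigma}}\int_{\delta}^{K}x^{\sigma-\alpha}e^{-x/\gamma}\,dx$ since $x\mapsto x^{\sigma-\alpha}e^{-x/\gamma}$ is continuous on $[\delta,K]$. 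For $k>Kn$ one has $\sum_{k>Kn}(k/n)^{1-\alpha}\mathbb{P}(Z_n=k)\le K^{1-\alpha}$ (here $\alpha>1$), so letting $n\to\infty$ and then $K\to\infty$ yields $L_0(\delta)$.

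It remains to identify the constant. Substituting $x=\gamma u$ gives $\frac1{\Gamma(\sigma)\gamma^{\sigma}}\int_0^{\infty}x^{\sigma-\alpha}e^{-x/\gamma}dx=\gamma^{1-\alpha}\Gamma(\sigma-\alpha+1)/\Gamma(\sigma)$, while the branch $r<\sigma$ of Theorem~\ref{t1.1} together with the Beta integral $\int_0^{\infty}(1+\gamma s)^{-\sigma}s^{\alpha-2}ds=\gamma^{1-\alpha}\Gamma(\alpha-1)\Gamma(\sigma-\alpha+1)/\Gamma(\sigma)$ gives $I(\alpha-1,\sigma)=\gamma^{1-\alpha}\Gamma(\sigma-\alpha+1)/\Gamma(\sigma)$. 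Hence $aL_0(\delta)=aI(\alpha-1,\sigma)-\frac{a}{\Gamma(\sigma)\gamma^{\sigma}}\int_0^{\delta}x^{\sigma-\alpha}e^{-x/\gamma}dx$, and since $e^{-x/\gamma}\le1$ and $\sigma-\alpha+1>0$ the subtracted term is at most $c\,\delta^{1+\sigma-\alpha}$ with $c=a/(\Gamma(\sigma)\gamma^{\sigma}(1+\sigma-\alpha))$. Combining this with the previous paragraph, $\varepsilon_n^{\alpha}n^{\alpha-1}\sum_{k\ge\delta n}\mathbb{P}(Z_n=k)\mathbb{P}(S_k\ge\varepsilon_n k)\to aL_0(\delta)$, which lies within $c\,\delta^{1+\sigma-\alpha}$ of $aI(\alpha-1,\sigma)$; this is precisely \eqref{2.20}.

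I expect the only genuinely delicate point to be the first step: verifying that $\varepsilon_n\ge n^{-\varrho}$ with $\varrho<\tfrac12$ forces $\varepsilon_n k$ into the validity range of the sharp heavy-tailed large-deviation asymptotic \emph{uniformly} over all $k\ge\delta n$, including $k$ far exceeding $n$, and that \eqref{1.6} then kicks in uniformly. Once that uniform big-jump estimate is secured, the remainder is a routine Riemann-sum limit plus the Gamma/Beta bookkeeping that links the answer to $I(\alpha-1,\sigma)$.
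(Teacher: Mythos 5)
Your argument is correct, and while the first step (the uniform one-big-jump asymptotic) matches the paper's use of Borovkov's result, the evaluation of the remaining sum takes a genuinely different route. The paper reduces $\sum_{k\geq\delta n}k^{1-\alpha}\mathbb{P}(Z_n=k)$ to the full harmonic-moment sum by citing Theorem~\ref{t1.1} directly (which gives $\sum_{k\geq 1}k^{1-\alpha}\mathbb{P}(Z_n=k)\sim n^{1-\alpha}I(\alpha-1,\sigma)$), and then bounds the discarded range $1\leq k\leq\delta n$ using the two-sided local estimate of Lemma~\ref{l2.5} (splitting at $k=\log n$ and choosing $a_n=\log n$, $\lambda=1$). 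You instead evaluate the truncated sum from scratch: apply the local limit theorem of Lemma~\ref{l2.3} on $[\delta n, Kn]$, pass to a Riemann integral $\frac{1}{\Gamma(\sigma)\gamma^\sigma}\int_\delta^K x^{\sigma-\alpha}e^{-x/\gamma}dx$, control the tail $k>Kn$ by $K^{1-\alpha}$, and finally identify the resulting Gamma integral $L_0(0)$ with the Beta-integral representation of $I(\alpha-1,\sigma)$ by direct computation. Both are valid; the paper's route is shorter because it reuses machinery already in place (Theorem~\ref{t1.1} and Lemma~\ref{l2.5}), whereas yours is more self-contained, actually establishes the genuine limit $\varepsilon_n^\alpha n^{\alpha-1}\sum_{k\geq\delta n}\cdots\to aL_0(\delta)$ rather than merely the $\limsup$ bound, and makes the $\delta^{1+\sigma-\alpha}$ constant explicit. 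One small presentation point: the Borovkov statement gives uniformity over $x\geq b_k\sqrt{k\log k}$ for a sequence $b_k$ depending only on $k$; your display computes the infimum over $k\geq\delta n$ as a function of $n$, which is not literally in that form, though it converts readily to the paper's choice $b_k=\delta^\varrho k^{1/2-\varrho}/\sqrt{\log k}$ by using $n\leq k/\delta$ and $\varepsilon_n\geq n^{-\varrho}$. Worth stating that conversion explicitly.
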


\begin{proof}
It is known from \cite [Corollory 6.3] {B00} that for every sequence $b_{k}\rightarrow\infty$,
\begin{align}\label{2.21o}
\mbb{P}(S_{k}\geq x)=(1+\eta_{0}(k,x))k\mbb{P}(X_{1}\geq x),
\end{align}
where $\eta_{0}$ is a function on $\mathbb{N}\times [0,\infty)$ such that
\begin{align}\label{2.21so}
\lim\limits_{k\uparrow\infty}\sup\limits_{x:x\geq b_{k}(k\log k)^{\frac{1}{2}}}|\eta_{0}(k,x)|=0.
\end{align}
Note that for each $n\geq1,~\delta>0$, and $k\geq \delta n$, we have
\begin{align*}
\frac{\varepsilon_{n}k}{(k\log k)^{\frac{1}{2}}}\geq
\delta^{\varrho}\frac{k^{\frac{1}{2}-\varrho}}{(\log k)^{\frac{1}{2}}}\uparrow \infty, \quad as ~ k\uparrow \infty.
\end{align*}
Letting $b_{k}=\delta^{\varrho}\frac{k^{\frac{1}{2}-\varrho}}{(\log k)^{\frac{1}{2}}}$ and using \eqref{2.21o}, we have that as $n\to \infty$,
\begin{align}
\sum\limits_{k\geq \delta n}\mbb{P}(Z_{n}=k)\mbb{P}(S_{k}\geq \varepsilon_{n}k)\nonumber
=&\sum\limits_{k\geq \delta n}(1+\eta_{0}(k,\varepsilon_{n}k))k\mbb{P}(Z_{n}=k)\mbb{P}(X_{1}\geq \varepsilon_{n}k)\\\label{2.23}
\sim & a\varepsilon^{-\alpha}_{n}\sum\limits_{k\geq \delta n}(1+\eta_{0}(k,\varepsilon_{n}k))k^{-(\alpha-1)}\mbb{P}(Z_{n}=k)
\end{align}
as $n\rightarrow\infty$. In addition, choosing $a_{n}=\log n$ and $\lambda=1$ in  \eqref{2.10},  then there exists $c>0$ such that for each $n\geq 1$ and $\delta>0$, we have
\begin{align*}
\sum\limits_{1\leq k\leq \delta n}k^{-(\alpha-1)}\mbb{P}(Z_{n}=k)
&=\sum\limits_{1\leq k\leq \log n}k^{-(\alpha-1)}\mbb{P}(Z_{n}=k)
+\sum\limits_{\log n\leq k\leq \delta n}k^{-(\alpha-1)}\mbb{P}(Z_{n}=k)\\
& \leq c\left((\log n)^{3+\sigma-\alpha}n^{-\sigma}+\delta^{\sigma+1-\alpha}n^{-(\alpha-1)}\right).
\end{align*}
Clearly,
\begin{align*}
(\log n)^{3+\sigma-\alpha}n^{-\sigma}=o(n^{-(\alpha-1)}).
\end{align*}
It follows from  Theorem \ref{t1.1}, for $\alpha-1<\sigma$,
\begin{align*}
\mbb{E}[Z_{n}^{-(\alpha-1)}|Z_{n}>0]\sim n^{-(\alpha-1)}I(\alpha-1,\sigma),\quad n\rightarrow\infty.
\end{align*}
Therefore, there exists $c>0$ such that for each $n\geq 1$ and $\delta>0$,
\begin{align}\label{2.24}
\Big|\sum\limits_{k\geq \delta n}k^{-(\alpha-1)}\mbb{P}(Z_{n}=k)-n^{-(\alpha-1)}I(\alpha-1,\sigma)\Big|
\leq cn^{-(\alpha-1)}\delta^{\sigma+1-\alpha}.
\end{align}
The result is established by \eqref{2.21so}-\eqref{2.24}.
\end{proof}\qed
\begin{lemma}\label{l2.9}
 Suppose $\mbb{E}[X_{1}^{+}]^{1+\sigma}<\infty$. Then, there exists $c>0$ such that for each $M\ge 1$,
\begin{align}\label{2.25}
\lim_{n\rightarrow\infty}\sup\varepsilon^{2\sigma}_{n}n^{\sigma}
\sum\limits_{k\geq M/\varepsilon^{2}_{n}}\mbb{P}(Z_{n}=k)\mbb{P}(S_{k}\geq \varepsilon_{n}k)\leq \frac{c}{M}.
\end{align}
\end{lemma}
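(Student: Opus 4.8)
The plan is to bound each summand by pairing the polynomial local estimate for $Z_n$ with a Fuk--Nagaev tail bound for $S_k$, and then to sum over the tail $k\ge M/\varepsilon_n^2$. Since $\varepsilon_n\to0$, the cut $M/\varepsilon_n^2$ tends to infinity, so Lemma~\ref{l2.5}, applied with $a_n=\lceil M/\varepsilon_n^2\rceil$ and any fixed $\lambda>0$, supplies a constant $c>0$ with $\mbb{P}(Z_n=k)\le c\,k^{\sigma-1}n^{-\sigma}$ for all $k\ge M/\varepsilon_n^2$; since $M\ge1$ and the assertion only involves $\limsup_{n\to\infty}$, this $c$ may be taken free of $M$ (the $M$-dependent threshold beyond which the local estimate has error $\le1$ is discarded by the $\limsup$). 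To $\mbb{P}(S_k\ge\varepsilon_n k)$ I apply \eqref{2.11} with $r=\sigma+1$, which is admissible because $\sigma>0$ forces $r>1$:
\begin{align*}
\mbb{P}(S_k\ge\varepsilon_n k)\le k\,\mbb{P}\!\Big(X_1\ge\tfrac{\varepsilon_n k}{\sigma+1}\Big)+\big(e(\sigma+1)\sigma_0^2\big)^{\sigma+1}\varepsilon_n^{-2(\sigma+1)}k^{-(\sigma+1)}.
\end{align*}
Multiplying the two estimates and summing, the quantity in \eqref{2.25} is dominated by $A_n+B_n$, with $B_n$ gathering the ``Gaussian'' term and $A_n$ the ``large jump'' term $k\,\mbb{P}(X_1\ge\varepsilon_n k/(\sigma+1))$.

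The term $B_n$ is the source of the bound $c/M$. Up to a constant depending only on $\sigma$ and $\sigma_0$,
\begin{align*}
B_n\le n^{-\sigma}\varepsilon_n^{-2(\sigma+1)}\sum_{k\ge M/\varepsilon_n^2}k^{(\sigma-1)-(\sigma+1)}=n^{-\sigma}\varepsilon_n^{-2(\sigma+1)}\sum_{k\ge M/\varepsilon_n^2}k^{-2}\le c\,n^{-\sigma}\varepsilon_n^{-2(\sigma+1)}\frac{\varepsilon_n^2}{M},
\end{align*}
whence $\varepsilon_n^{2\sigma}n^\sigma B_n\le c/M$. Here the choice $r=\sigma+1$ is exactly what closes the estimate: it is the smallest exponent $>\sigma$ for which the $k$-sum converges, and the surviving tail $\asymp(M/\varepsilon_n^2)^{-1}$ then reduces $\varepsilon_n^{-2(\sigma+1)}$ to $\varepsilon_n^{-2\sigma}M^{-1}$, i.e.\ precisely what is absorbed by the prefactor $\varepsilon_n^{2\sigma}n^\sigma$.

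For $A_n$ I would pass to the expectation by Tonelli:
\begin{align*}
\sum_{k\ge M/\varepsilon_n^2}k^\sigma\,\mbb{P}\!\Big(X_1^+\ge\tfrac{\varepsilon_n k}{\sigma+1}\Big)=\mbb{E}\Big[\sum_{M/\varepsilon_n^2\le k\le(\sigma+1)X_1^+/\varepsilon_n}k^\sigma\Big]\le\frac{c}{\varepsilon_n^{\sigma+1}}\,\mbb{E}\Big[(X_1^+)^{\sigma+1}\mathbf{1}_{\{X_1^+\ge M/((\sigma+1)\varepsilon_n)\}}\Big],
\end{align*}
where the inner sum is bounded crudely by $\int_0^{(\sigma+1)X_1^+/\varepsilon_n+1}x^\sigma\,dx$ and, on the displayed event, $\varepsilon_n\le X_1^+$ for $n$ large. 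Thus $A_n\le c\,n^{-\sigma}\varepsilon_n^{-(\sigma+1)}\mbb{E}[(X_1^+)^{\sigma+1}\mathbf{1}_{\{X_1^+\ge M/((\sigma+1)\varepsilon_n)\}}]$, and after multiplying by $\varepsilon_n^{2\sigma}n^\sigma$ this equals $c\,\varepsilon_n^{\sigma-1}$ times a truncated moment tending to $0$ by dominated convergence — this is the only place where the hypothesis $\mbb{E}(X_1^+)^{1+\sigma}<\infty$ is used. Since $\sigma>1$ makes $\varepsilon_n^{\sigma-1}$ bounded, $\varepsilon_n^{2\sigma}n^\sigma A_n\to0$, and adding the two contributions and taking $\limsup_n$ yields the claimed bound $c/M$.

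The only delicate point is exponent bookkeeping: one must pick $r$ in \eqref{2.11} so that, simultaneously, the $k$-sum in the Gaussian term converges ($r>\sigma$), the leftover power of $\varepsilon_n$ cancels against $\varepsilon_n^{2\sigma}$ and leaves a clean $M^{-1}$ ($r\ge\sigma+1$), and the large-jump term stays controllable with the single available moment $\mbb{E}(X_1^+)^{1+\sigma}<\infty$; the value $r=\sigma+1$ is the natural common choice. A minor subtlety, already flagged above, is that the constant produced by Lemma~\ref{l2.5} must be taken independent of $M$, which is harmless since only $\limsup_{n\to\infty}$ matters; everything else — the Tonelli exchange and the dominated-convergence argument — is routine.
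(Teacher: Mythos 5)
Your proof reproduces the paper's argument essentially verbatim: the same local bound $\mbb{P}(Z_n=k)\le c\,k^{\sigma-1}n^{-\sigma}$ from Lemma~\ref{l2.5}, the same Fuk--Nagaev inequality \eqref{2.11} with the same choice $r=\sigma+1$, the same $c/M$ from the sub-Gaussian piece, and the same use of $\mbb{E}(X_1^+)^{1+\sigma}<\infty$ plus $\varepsilon_n\to0$ to kill the large-jump piece. The only cosmetic difference is that you convert $\sum_k k^\sigma\mbb{P}(X_1\ge(\sigma+1)^{-1}\varepsilon_n k)$ into a truncated moment by a Tonelli exchange, while the paper compares the sum to an integral and substitutes $v=(\sigma+1)^{-1}\varepsilon_n u$; the two computations are equivalent (and the paper additionally dispatches the unused range $\sigma<1$ via \eqref{1.3} and \eqref{1.5}, whereas you invoke $\sigma>1$, which is the only case needed for Theorem~\ref{t1.2}).
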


\begin{proof}
Combing \eqref{2.10} and \eqref{2.11} with $r=\sigma+1$, we have
\begin{align}\nonumber
&n^{\sigma}\sum\limits_{k\geq M/\varepsilon^{2}_{n}}
\mbb{P}(Z_{n}=k)\mbb{P}(S_{k}\geq \varepsilon_{n}k)\\ \nonumber
&\leq c_{0}\sum\limits_{k\geq M/\varepsilon^{2}_{n}}
k^{\sigma-1}\mbb{P}(S_{k}\geq \varepsilon_{n}k)\\ \label{2.26}
&\leq c_{0}\sum\limits_{k\geq M/\varepsilon^{2}_{n}}k^{\sigma}
\mbb{P}(X_{1}\geq (\sigma+1)^{-1}\varepsilon_{n}k)+
c\varepsilon^{-2(\sigma+1)}_{n}\sum\limits_{k\geq M/\varepsilon^{2}_{n}}k^{-2}.
\end{align}
On the one hand,
\begin{align}\label{2.27}
\varepsilon^{-2(\sigma+1)}_{n}\sum\limits_{k\geq M/\varepsilon^{2}_{n}}k^{-2}\leq \frac{c}{M}\varepsilon^{-2\sigma}_{n};
\end{align}
On the other hand,
\begin{align}\label{2.28}
&\sum\limits_{k\geq M/\varepsilon^{2}_{n}}k^{\sigma}
\mbb{P}(X_{1}\geq (\sigma+1)^{-1}\varepsilon_{n}k)\\ \nonumber
&\leq c\int_{M/\varepsilon^{2}_{n}-1}^{\infty}u^{\sigma}
\mbb{P}(X_{1}\geq (\sigma+1)^{-1}\varepsilon_{n}u)du\\ \nonumber
&\leq c\varepsilon^{-\sigma-1}_{n}\int_{(M-\varepsilon^{2}_{n})/[(\sigma+1)\varepsilon_n]}^{\infty}
v^{\sigma}\mbb{P}(X_{1}\geq v)dv.
\end{align}
By $\mbb{E}[X_{1}^{+}]^{1+\sigma}<\infty$ and $\varepsilon_{n}\rightarrow0$, we obtain
\begin{align*}
\int_{(M-\varepsilon^{2}_{n})/[(\sigma+1)\varepsilon_{n}]}^{\infty}
v^{\sigma}\mbb{P}(X_{1}\geq v)dv\rightarrow0.
\end{align*}
If $\sigma\geq 1$, then the order of \eqref{2.28} is $o(\varepsilon^{-2\sigma}_{n})$, uniformly in $M\geq 1$. Combing \eqref{2.26}-\eqref{2.28}, we complete the proof. If $\sigma<1$, then the assertion can be obtained by \eqref{1.3} and \eqref{1.5}. \qed
\end{proof}

\begin{lemma}\label{l2.11}
Suppose condition (A) is satisfied. Then, for each $0<\delta<1$ and each $M\ge 1$,
\begin{align}\label{2.31}
\lim_{n\rightarrow\infty}\varepsilon^{2\sigma}_{n}n^{\sigma}
\sum\limits_{\delta/\varepsilon^{2}_{n} \leq k\leq M/\varepsilon^{2}_{n}}\mbb{P}(Z_{n}=k)\mbb{P}(S_{k}\geq \varepsilon_{n}k)
=\frac{1}{\Gamma(\sigma)\gamma^{\sigma}}\int_{\delta}^{M}u^{\sigma-1}\Psi(\sqrt{u}/\sigma_{0})du.
\end{align}
\end{lemma}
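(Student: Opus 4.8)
The plan is to replace $\mbb{P}(Z_{n}=k)$ on the window $\delta/\varepsilon_n^{2}\le k\le M/\varepsilon_n^{2}$ by the Gaussian-type local estimate of Lemma~\ref{l2.3}, replace $\mbb{P}(S_k\ge\varepsilon_n k)$ by its central-limit approximation, and then recognise the resulting sum as a Riemann sum for the integral on the right of \eqref{2.31}.

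First I would check that Lemma~\ref{l2.3} applies on the relevant range of $k$. Since $\varepsilon_n\to 0$ and $n\varepsilon_n^{2}\to\infty$, the lower endpoint satisfies $\delta/\varepsilon_n^{2}\to\infty$, while $M/(n\varepsilon_n^{2})\to 0$; hence there is a constant $c$ with $[\delta/\varepsilon_n^{2},M/\varepsilon_n^{2}]\subseteq[k_n,cn]$ for $k_n:=\lceil\delta/\varepsilon_n^{2}\rceil$, and Lemma~\ref{l2.3} gives, uniformly for $k$ in this window, $\mbb{P}(Z_{n}=k)=\frac{1}{\Gamma(\sigma)\gamma^{\sigma}}\frac{k^{\sigma-1}}{n^{\sigma}}e^{-k/(\gamma n)}(1+\eta_1(n,k))$ with $\sup_k|\eta_1(n,k)|\to0$; moreover $e^{-k/(\gamma n)}=1+o(1)$ uniformly, because $k/n\le M/(n\varepsilon_n^{2})\to0$. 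Since $\mbb{P}(S_k\ge\varepsilon_n k)\le1$ and the surviving sum will be shown bounded, substituting these expansions and using $\varepsilon_n^{2\sigma}k^{\sigma-1}=\varepsilon_n^{2}(\varepsilon_n^{2}k)^{\sigma-1}$ yields
\begin{align*}
\varepsilon_n^{2\sigma}n^{\sigma}\!\!\sum_{\delta/\varepsilon_n^{2}\le k\le M/\varepsilon_n^{2}}\!\!\mbb{P}(Z_{n}=k)\mbb{P}(S_k\ge\varepsilon_n k)
&=\frac{1+o(1)}{\Gamma(\sigma)\gamma^{\sigma}}\sum_{\delta/\varepsilon_n^{2}\le k\le M/\varepsilon_n^{2}}\varepsilon_n^{2}\,(\varepsilon_n^{2}k)^{\sigma-1}\mbb{P}(S_k\ge\varepsilon_n k).
\end{align*}

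Next I would view the last sum as $\int_\delta^M g_n(u)\,du$ up to boundary pieces of size $O(\varepsilon_n^{2})$, where $g_n$ is the step function on $[\delta,M]$ whose value on the subinterval containing $u$ equals $(\varepsilon_n^{2}k)^{\sigma-1}\mbb{P}(S_k\ge\varepsilon_n k)$ for the corresponding index $k=k(n,u)$. For each fixed $u\in[\delta,M]$ one has $k(n,u)\to\infty$ and $\varepsilon_n^{2}k(n,u)\to u$, so $(\varepsilon_n^{2}k(n,u))^{\sigma-1}\to u^{\sigma-1}$ and $\varepsilon_n\sqrt{k(n,u)}=\sqrt{\varepsilon_n^{2}k(n,u)}\to\sqrt{u}$; writing $\mbb{P}(S_k\ge\varepsilon_n k)=\mbb{P}(S_k/\sqrt k\ge\varepsilon_n\sqrt k)$ and invoking the central limit theorem under \eqref{1.3} together with P\'olya's theorem (the normal law being continuous, the convergence of distribution functions is uniform), we obtain $g_n(u)\to u^{\sigma-1}\Psi(\sqrt u/\sigma_0)$. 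Since $0\le g_n\le\max(\delta^{\sigma-1},M^{\sigma-1})$ on the finite interval $[\delta,M]$ and the limit is continuous, bounded convergence gives $\int_\delta^M g_n\to\int_\delta^M u^{\sigma-1}\Psi(\sqrt u/\sigma_0)\,du$, and combining this with the previous display proves \eqref{2.31}.

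The step that needs the most care is the uniform control in the last paragraph: one must verify that the index-dependent approximation $\mbb{P}(S_{k(n,u)}\ge\varepsilon_n k(n,u))\to\Psi(\sqrt u/\sigma_0)$ holds for every $u$ (this is exactly where P\'olya's theorem, rather than plain weak convergence, is needed), and that the error term $\eta_1$ of Lemma~\ref{l2.3} and the factor $e^{-k/(\gamma n)}$ are $o(1)$ uniformly over precisely the window $[\delta/\varepsilon_n^{2},M/\varepsilon_n^{2}]$ — both relying on the hypotheses $\varepsilon_n\to0$ and $n\varepsilon_n^{2}\to\infty$. Everything else is a routine Riemann-sum computation.
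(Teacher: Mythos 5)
Your proposal is correct and takes essentially the same route as the paper: replace $\mbb{P}(Z_n=k)$ on the window by the local estimate of Lemma~\ref{l2.3}, note that both $\eta_1$ and $e^{-k/(\gamma n)}$ are $1+o(1)$ uniformly there (since $k/n\le M/(n\varepsilon_n^2)\to0$), replace $\mbb{P}(S_k\ge\varepsilon_n k)$ by the central-limit approximation with uniform error, and recognise the Riemann sum. The paper packages the uniform CLT error as a function $\eta_2(n,k)$ with $\sup_{k}|\eta_2(n,k)|\to0$ rather than invoking P\'olya's theorem by name, but this is the same ingredient.
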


\begin{proof}
  Denote $$\sum\limits_{n}(\delta,M)=\sum\limits_{\delta/\varepsilon^{2}_{n} \leq k\leq M/\varepsilon^{2}_{n}}\mbb{P}(Z_{n}=k)\mbb{P}(S_{k}\geq \varepsilon_{n}k).$$  Then, by \eqref{2.4},
\begin{align}\label{2.32}
\sum\limits_{n}(\delta,M)=
\sum\limits_{\delta/\varepsilon^{2}_{n} \leq k\leq M/\varepsilon^{2}_{n}}(1+\eta_{1}(n,k))\frac{1}{\Gamma(\sigma)\gamma^{\sigma}}
\frac{k^{\sigma-1}}{n^{\sigma}}\exp \bigg\{-\frac{k}{\gamma n}\bigg\}\mbb{P}(S_{k}\geq \varepsilon_{n}k).
\end{align}
Clearly,
\begin{align}\label{2.33}
&\underline{V}(n)\sum\limits_{\delta/\varepsilon^{2}_{n} \leq k\leq M/\varepsilon^{2}_{n}}
\frac{k^{\sigma-1}}{n^{\sigma}}\mbb{P}(S_{k}\geq \varepsilon_{n}k)\\ \nonumber
&\leq \sum\limits_{\delta/\varepsilon^{2}_{n} \leq k\leq M/\varepsilon^{2}_{n}}
(1+\eta_{1}(n,k))\frac{k^{\sigma-1}}{n^{\sigma}}\exp \bigg\{-\frac{k}{\gamma n}\bigg\}\mbb{P}(S_{k}\geq \varepsilon_{n}k)\\ \nonumber
&\leq \overline{V}(n)\sum\limits_{\delta/\varepsilon^{2}_{n} \leq k\leq M/\varepsilon^{2}_{n}}\frac{k^{\sigma-1}}{n^{\sigma}}\mbb{P}(S_{k}\geq \varepsilon_{n}k),
\end{align}
where
\begin{align*}
\underline{V}(n)=\inf\limits_{\delta/\varepsilon^{2}_{n} \leq u\leq M/\varepsilon^{2}_{n}}(1+\eta_{1}(n,u))
\exp \bigg\{-\frac{u}{\gamma n}\bigg\},
\end{align*}
\begin{align*}
\overline{V}(n)=\sup\limits_{\delta/\varepsilon^{2}_{n} \leq u\leq M/\varepsilon^{2}_{n}}(1+\eta_{1}(n,u))
\exp \bigg\{-\frac{u}{\gamma n}\bigg\}.
\end{align*}
Recalling Lemma \ref{l2.3}, we have
\begin{align}\label{2.34}
\lim_{n\rightarrow\infty}\underline{V}(n)=
\lim_{n\rightarrow\infty}\overline{V}(n)=1.
\end{align}
By the central limit theorem,
\begin{align*}
\mbb{P}(S_{k}\geq \varepsilon_{n}k)=(1+\eta_{2}(n,k))
\Psi(\sqrt{\varepsilon^{2}_{n}k}/\sigma_{0}),
\end{align*}
where $\eta_{2}$ is a function on $\mathbb{N}\times [0,\infty)$ such that
\begin{align}\label{eta2}
\lim_{n\to \infty}\sup\limits_{k\in[\delta/\varepsilon^{2}_{n},M/\varepsilon^{2}_{n}]}
|\eta_{2}(n,k)|=0,
\end{align}
and $1-\Psi(x)$ is the standard normal distribution function. Hence, as $n\rightarrow\infty$,
\begin{align}\label{2.36}
&\sum\limits_{\delta/\varepsilon^{2}_{n} \leq k\leq M/\varepsilon^{2}_{n}}k^{\sigma-1}\mbb{P}(S_{k}\geq \varepsilon_{n}k)\\ \nonumber
&=\sum\limits_{\delta/\varepsilon^{2}_{n} \leq k\leq M/\varepsilon^{2}_{n}}(1+\eta_{2}(n,k))k^{\sigma-1}\Psi(\sqrt{\varepsilon^{2}_{n}k}/\sigma_{0})\\
\nonumber
 &\sim\sum\limits_{\delta/\varepsilon^{2}_{n} \leq k\leq M/\varepsilon^{2}_{n}}\eta_{2}(n,k)k^{\sigma-1}\Psi(\sqrt{\varepsilon^{2}_{n}k}/\sigma_{0})\\
\nonumber
&+\varepsilon^{-2\sigma}_{n}
\int_{\delta}^{M}u^{\sigma-1}\Psi(\sqrt{u}/\sigma_{0})du.
\end{align}
Combing \eqref{2.32}-\eqref{2.36}, we get \eqref{2.31}.\qed
\end{proof}

\begin{lemma}\label{l2.12}
Suppose $X_{1}^{+}$ has a tail of index $\alpha\in(2,1+\sigma)$. Then, there exists $c>0$ such that for each $\delta>0$ and $M\ge 1$, \\
(a) if $\varepsilon_{n} n^{\rho}\rightarrow y\in[0,\infty)$ as $n\rightarrow\infty$, then
\begin{align}\label{2.37}
\limsup\limits_{n\rightarrow\infty}\varepsilon^{2\sigma}_{n}n^{\sigma}\sum\limits_{M/\varepsilon^{2}_{n} \leq k\leq \delta n}\mbb{P}(Z_{n}=k)\mbb{P}(S_{k}\geq \varepsilon_{n}k)\leq
c\delta^{1+\sigma-\alpha}y^{2\sigma-\alpha}
+\frac{c}{M};
\end{align}
(b) if $\varepsilon_{n} n^{\rho}\rightarrow \infty$ as $n\rightarrow\infty$, then
\begin{align}\label{2.38}
\limsup\limits_{n\rightarrow\infty}\varepsilon^{\alpha}_{n}n^{\alpha-1}\sum\limits_{1\leq k\leq \delta n}\mbb{P}(Z_{n}=k)\mbb{P}(S_{k}\geq \varepsilon_{n}k)\leq
c\delta^{1+\sigma-\alpha}.
\end{align}
\end{lemma}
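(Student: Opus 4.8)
The plan is to derive, on each range of $k$, a crude upper bound for the summand $\mathbb{P}(Z_n=k)\,\mathbb{P}(S_k\ge\varepsilon_n k)$ by combining the local probability estimate of Lemma~\ref{l2.5} with the Fuk--Nagaev inequality \eqref{2.11} and the tail hypothesis \eqref{1.6}, and then to reduce everything to elementary estimates of sums $\sum_k k^\beta$. Throughout I would take $r=\sigma+1$ in \eqref{2.11}: weighted against the factor $k^{\sigma-1}$ coming from $\mathbb{P}(Z_n=k)\le c_0k^{\sigma-1}n^{-\sigma}$, the ``variance'' term $\varepsilon_n^{-2r}k^{-r}$ becomes $\varepsilon_n^{-2(\sigma+1)}k^{-2}$, whose tail sum from $\varepsilon_n^{-2}$ is $O(\varepsilon_n^2)$ and therefore contributes only $O((n\varepsilon_n^2)^{-\sigma})$, while the ``large jump'' term becomes $c\varepsilon_n^{-\alpha}k^{\sigma-\alpha}$, which is summable over $k\le\delta n$ precisely because $\sigma-\alpha>-1$ (equivalently $\alpha<1+\sigma$). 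The one identity used repeatedly is $\rho(2\sigma-\alpha)=1+\sigma-\alpha$, which gives $\varepsilon_n^{2\sigma-\alpha}n^{1+\sigma-\alpha}=(\varepsilon_n n^\rho)^{2\sigma-\alpha}$ and $\varepsilon_n^{\alpha-2\sigma}n^{\alpha-1-\sigma}=(\varepsilon_n n^\rho)^{-(2\sigma-\alpha)}$; note $2\sigma-\alpha>\sigma-1>0$ since $\sigma>1$. Only limsup bounds are asserted, so these one-sided estimates suffice.

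For part (a), I would observe that on $M/\varepsilon_n^2\le k\le\delta n$ one has $k\to\infty$, whence Lemma~\ref{l2.5} gives $\mathbb{P}(Z_n=k)\le c_0k^{\sigma-1}n^{-\sigma}$, and $\varepsilon_n k\ge M\varepsilon_n^{-1}\to\infty$, so \eqref{2.11} with $r=\sigma+1$ and \eqref{1.6} give $\mathbb{P}(S_k\ge\varepsilon_n k)\le c\varepsilon_n^{-\alpha}k^{1-\alpha}+c\varepsilon_n^{-2(\sigma+1)}k^{-(\sigma+1)}$. Multiplying, summing, and using $\sum_{k\le\delta n}k^{\sigma-\alpha}\le c(\delta n)^{1+\sigma-\alpha}$ together with $\sum_{k\ge M/\varepsilon_n^2}k^{-2}\le c\varepsilon_n^2/M$, the quantity inside the limsup of \eqref{2.37} is at most $c\delta^{1+\sigma-\alpha}\,\varepsilon_n^{2\sigma-\alpha}n^{1+\sigma-\alpha}+c/M$; letting $n\to\infty$ and using $\varepsilon_n n^\rho\to y$ (with the convention $0^{2\sigma-\alpha}=0$) yields \eqref{2.37}.

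For part (b) the sum starts at $k=1$, so I would split $\{1\le k\le\delta n\}$ into three pieces with $a_n=\log n$: (i) $1\le k\le a_n$, where only the crude bound $\mathbb{P}(Z_n=k)\le c_0k^{\sigma+1}n^{-\sigma}$ (Lemma~\ref{l2.5} with $\lambda=1$) is available and I bound $\mathbb{P}(S_k\ge\varepsilon_n k)\le1$; (ii) $a_n<k\le\min(\delta n,\varepsilon_n^{-2})$, where $\mathbb{P}(Z_n=k)\le c_0k^{\sigma-1}n^{-\sigma}$ but $\varepsilon_n k$ can be small, so again $\mathbb{P}(S_k\ge\varepsilon_n k)\le1$; and (iii) $\max(a_n,\varepsilon_n^{-2})<k\le\delta n$, where $\varepsilon_n k>\varepsilon_n^{-1}\to\infty$ and the Fuk--Nagaev bound of part (a) applies verbatim. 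Piece (i) is at most $ca_n^{\sigma+2}n^{-\sigma}$, so after multiplying by $\varepsilon_n^\alpha n^{\alpha-1}$ it is $\le c\varepsilon_n^\alpha(\log n)^{\sigma+2}n^{\alpha-1-\sigma}\to0$; piece (ii) is at most $c(n\varepsilon_n^2)^{-\sigma}$, so after scaling it equals $c(\varepsilon_n n^\rho)^{-(2\sigma-\alpha)}\to0$ because $\varepsilon_n n^\rho\to\infty$ and $2\sigma-\alpha>0$; piece (iii), treated exactly as in part (a), contributes the jump term $\le c\delta^{1+\sigma-\alpha}$ plus a variance term $\le c(\varepsilon_n n^\rho)^{-(2\sigma-\alpha)}\to0$. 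Adding the three pieces gives \eqref{2.38}.

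The step I expect to be the main obstacle is the technical bookkeeping in part (b): one must check that the three ranges genuinely cover $\{1\le k\le\delta n\}$ for every admissible $\{\varepsilon_n\}$ — range (ii) is empty when $\varepsilon_n$ decays so slowly that $\varepsilon_n^{-2}\le a_n$, and range (iii) is nonempty only once $\varepsilon_n^{-2}<\delta n$, which holds for all large $n$ because $n\varepsilon_n^2\to\infty$ by \eqref{1.5} — and one must verify that each ``error'' piece is genuinely $o(1)$ after scaling rather than merely bounded; this is precisely where both hypotheses $n\varepsilon_n^2\to\infty$ and $\varepsilon_n n^\rho\to\infty$ enter and have to be tracked. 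Everything else is routine summation of $\sum k^\beta$ with $\beta>-1$ or $\beta<-1$.
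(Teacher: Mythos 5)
Your proposal is correct and follows essentially the same route as the paper: both use Lemma~\ref{l2.5} for the local probability, the Fuk--Nagaev inequality \eqref{2.11} with $r=\sigma+1$, and the tail hypothesis \eqref{1.6}, then reduce to elementary power sums and the identity $\rho(2\sigma-\alpha)=1+\sigma-\alpha$. The only organizational difference is in part (b): the paper invokes Lemma~\ref{l2.10} (whose own internal cutoff is $a_n=\varepsilon_n^{-\sigma/(2\sigma+1)}$) to dispose of $k\le 1/\varepsilon_n^2$ and then reuses the intermediate bound from part (a) with $M=1$, whereas you redo the small-$k$ split from scratch with $a_n=\log n$; the argument is the same in substance.
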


\begin{proof}
Combing \eqref{2.10} and \eqref{2.11} with $r=\sigma+1$, we have
\begin{align}\label{2.39}
&\sum\limits_{M/\varepsilon^{2}_{n}\leq k\leq \delta n}
\mbb{P}(Z_{n}=k)\mbb{P}(S_{k}\geq \varepsilon_{n}k)\\ \nonumber
&\leq cn^{-\sigma}\sum\limits_{M/\varepsilon^{2}_{n}\leq k\leq \delta n}k^{\sigma}
\mbb{P}(X_{1}\geq (\sigma+1)^{-1}\varepsilon_{n}k)+
cn^{-\sigma}\varepsilon^{-2(\sigma+1)}_{n}\sum\limits_{M/\varepsilon^{2}_{n}\leq k\leq \delta n}k^{-2}.
\end{align}
Observe that
\begin{align}\label{2.40}
\varepsilon^{-2(\sigma+1)}_{n}\sum\limits_{M/\varepsilon^{2}_{n}\leq k\leq \delta n}k^{-2}\leq \frac{c}{M}\varepsilon^{-2\sigma}_{n},\quad M\geq 1.
\end{align}
Further, by \eqref{1.6}, we get
\begin{align}\label{2.41}
\sum\limits_{M/\varepsilon^{2}_{n}\leq k\leq \delta n}k^{\sigma}
\mbb{P}(X_{1}\geq (\sigma+1)^{-1}\varepsilon_{n}k)\leq
c\varepsilon^{-\alpha}_{n}\delta^{1+\sigma-\alpha}n^{1+\sigma-\alpha}.
\end{align}
Then, from \eqref{2.39}--\eqref{2.41}  we obtain
\begin{align}\label{x2.37}
\sum\limits_{M/\varepsilon^{2}_{n} \leq k\leq \delta n}\mbb{P}(Z_{n}=k)\mbb{P}(S_{k}\geq \varepsilon_{n}k)\leq
c\Big(\delta^{1+\sigma-\alpha}\varepsilon^{-\alpha}_{n}n^{1-\alpha}
+\frac{1}{M}\varepsilon^{-2\sigma}_{n}n^{-\sigma}\Big).
\end{align}
Considering that $\varepsilon_{n} n^{\rho}\rightarrow y\in[0,\infty)$, we obtain \eqref{2.37}.

Finally, by Lemma \ref{l2.10}, we have
\begin{align*}
\sum\limits_{1\leq k\leq 1/\varepsilon^{2}_{n}}P(Z_{n}=k)P(S_{k}\geq \varepsilon_{n}k)\leq c\varepsilon^{-2\sigma}_{n}n^{-\sigma}.
\end{align*}
Let $M=1$ in \eqref{x2.37}, we get
\begin{align*}
\sum\limits_{1\leq k\leq \delta n}\mbb{P}(Z_{n}=k)\mbb{P}(S_{k}\geq \varepsilon_{n}k)\leq
c\Big(\delta^{1+\sigma-\alpha}\varepsilon^{-\alpha}_{n}n^{1-\alpha}
+\varepsilon^{-2\sigma}_{n}n^{-\sigma}\Big).
\end{align*}
Considering that $\varepsilon_{n} n^{\rho}\rightarrow \infty$, we obtain \eqref{2.38}.
\end{proof}\qed

\section{Proofs of main theorem}

\textbf{Proof of Theorem 1.1.}
We complete the proof by \eqref{3.0} and Lemmas \ref{l4.1} and \ref{l3.1}.

\qed

\textbf{Proof of Theorem 1.2.} (a) From \eqref{5.0}, we have
\begin{align*}
\mbb{P}(L_{n}\geq \varepsilon_{n})
=&\sum\limits_{1\leq k<\delta/\varepsilon^{2}_{n}}\mbb{P}(Z_{n}=k)\mbb{P}(S_{k}\geq \varepsilon_{n}k)\\
+&\sum\limits_{\delta/\varepsilon^{2}_{n}\leq k< M/\varepsilon^{2}_{n}}\mbb{P}(Z_{n}=k)\mbb{P}(S_{k}\geq \varepsilon_{n}k)\\
+&\sum\limits_{k\geq M/\varepsilon^{2}_{n}}\mbb{P}(Z_{n}=k)\mbb{P}(S_{k}\geq \varepsilon_{n}k).
\end{align*}
First, we show that
\begin{align}\label{5.1}
\lim_{n\rightarrow\infty}\sup\varepsilon^{2\sigma}_{n}n^{\sigma}
\sum\limits_{k\geq M/\varepsilon^{2}_{n}}\mbb{P}(Z_{n}=k)\mbb{P}(S_{k}\geq \varepsilon_{n}k)\leq \frac{c}{M},\quad M\geq 1.
\end{align}
Under the condition $\mbb{E}[X_{1}^{+}]^{1+\sigma}<\infty$, this bound follows from Lemma \ref{l2.9}. Hence, we only need to show \eqref{5.1} in the case $X_{1}^{+}$ has a tail of index $\alpha$.

Under the assumption $\varepsilon_nn^\rho\to 0$,
\begin{align*}
\varepsilon^{-\alpha}_{n}n^{1-\alpha}=o(\varepsilon^{-2\sigma}_{n}n^{-\sigma}),
\quad \frac{1}{\varepsilon^{2}_{n}n}\exp\{-c\varepsilon^{2}_{n}n\}
=o(\varepsilon^{-2\sigma}_{n}n^{-\sigma}),\quad n\rightarrow\infty.
\end{align*}
By Lemma \ref{l2.7}, we get
\begin{align*}
\limsup\limits_{n\rightarrow\infty}\varepsilon^{2\sigma}_{n}n^{\sigma}\sum\limits_{k\geq n}\mbb{P}(Z_{n}=k)\mbb{P}(S_{k}\geq \varepsilon_{n}k)
=0.
\end{align*}
Combing with \eqref{2.37} ($\delta=1$, $y=0$), we complete the proof of \eqref{5.1}. Collecting Lemma \ref{l2.10}, Lemma \ref{l2.11}, \eqref{5.1}, and letting $\delta\downarrow0,~M\uparrow\infty$, we obtain  part (a).

(b) In this case, we use the following decomposition:
\begin{align*}
\mbb{P}(L_{n}\geq \varepsilon_{n})
=\sum\limits_{1\leq k\leq \delta n}\mbb{P}(Z_{n}=k)\mbb{P}(S_{k}\geq \varepsilon_{n}k)+\sum\limits_{k\geq \delta n}\mbb{P}(Z_{n}=k)\mbb{P}(S_{k}\geq \varepsilon_{n}k).
\end{align*}
Letting $\delta\downarrow0$ in \eqref{2.38} and \eqref{2.20}, we obtain part (b).

(c) Note that $\varepsilon_{n}n^{\rho}\rightarrow \tau$ as $n\rightarrow\infty$, part (c) follows from \eqref{sbound}, \eqref{2.31}, \eqref{2.37} and  \eqref{2.20}.

\qed

\textbf{Proof of Theorem \ref{c1.5}.}

\eqref{c1} can be proved by the similar way of Theorem \ref{t1.2} (a). For \eqref{cb} and \eqref{c1.7}, differently from Theorem \ref{t1.2}, we  have the following decomposition of $k$: $(0,(\log n)^{\frac{3}{\sigma-1}}],~(
(\log n)^{\frac{3}{\sigma-1}},\infty)$ and
$(0,\delta/\varepsilon^{2}_{n}]$,  $(\delta/\varepsilon^{2}_{n},M/\varepsilon^{2}_{n}]$,
$(M/\varepsilon^{2}_{n},(\log n)^{\frac{3}{\sigma-1}}]$, $((\log n)^{\frac{3}{\sigma-1}},\infty)$, respectively. Furthermore, we need to modify  Lemma \ref{l2.8} as the following: Suppose $X_{1}^{+}$ has a tail of index $\alpha=1+\sigma$. Let $x>0$. If $\varepsilon_{n}\geq (\log n)^{-\varrho}$ for some $\varrho\in(0,\frac{x}{2})$, then
\begin{align}\label{6.1}
\lim\limits_{n\rightarrow\infty}\Big|\frac{1}{\log n}\varepsilon^{\sigma+1}_{n}n^{\sigma}
\sum\limits_{k\geq (\log n)^{x}}\mbb{P}(Z_{n}=k)\mbb{P}(S_{k}\geq \varepsilon_{n}k)
-aI( \sigma,\sigma)\Big|=0.
\end{align}
We omit the details  of the proof here.\qed

\textbf{Proof of Corollary \ref{c2}.}
  We begin with part (a).  First we have
\begin{align}\label{6.2}
n^{\sigma}\mbb{P}(L_{n}\geq \varepsilon)
=\sum\limits_{k=1}^{\infty}n^{\sigma}\mbb{P}(Z_{n}=k)\mbb{P}(S_{k}\geq \varepsilon k):=\sum\limits_{k=1}^{\infty}a_{nk}.
\end{align}
Using Lemma \ref{l2.6} with $r=\alpha-1$,
\begin{align*}
a_{nk}\le &n^{\sigma}\mbb{P}(Z_{n}=k)\left(k\mbb{P}(X_{1}\geq (\sigma+1)^{-1}\varepsilon k)
+ck^{-(\alpha-1)}\right)\\
\leq&cn^{\sigma}\mbb{P}(Z_{n}=k)k^{1-\alpha}\\
:=&cb_{nk}.
\end{align*}
By Lemma~\ref{l2.1}, as $n\to \infty$,
\begin{align*}
a_{nk}\to \widehat{a}_k:=\mu_k \mbb{P}(S_{k}\geq \varepsilon k),
\end{align*}
and
\begin{align*}
b_{nk}\to \widehat{b}_k:=\mu_k k^{1-\alpha}.
\end{align*}
Recalling $\alpha>1+\sigma$ and Theorem \ref{t1.1}, we get as $n\rightarrow\infty$,
\begin{align*}
\sum\limits_{k=1}^{\infty}
b_{nk}
\rightarrow \sum\limits_{k=1}^{\infty}
\widehat{b}_{k}=I(\alpha-1,\sigma)<\infty.
\end{align*}
Now, using the modification of dominated convergence theorem, we get \eqref{t1.3a} with
    $$
    q(\varepsilon)=\sum\limits_{k=1}^{\infty}\mu_k \mbb{P}(S_{k}\geq \varepsilon k).$$

For part (b), in the similar way of Lemma \ref{l2.8} and Lemma \ref{l2.12}, we obtain
\begin{align*}
\lim\limits_{n\rightarrow\infty}\Big|\frac{1}{\log n}\varepsilon^{\sigma+1}n^{\sigma}
\sum\limits_{k\geq \log n}\mbb{P}(Z_{n}=k)\mbb{P}(S_{k}\geq \varepsilon k)
-aI(\sigma,\sigma)\Big|=0,
\end{align*}
and
\begin{align*}
\sum\limits_{1\leq k< \log n}\mbb{P}(Z_{n}=k)\mbb{P}(S_{k}\geq \varepsilon k)\leq
cn^{-\sigma}\log \log n,
\end{align*}
which yield \eqref{t1.3b}.

 For part (c), we also split $k$ into the following two parts: $(0,\log n],~[\log n,\infty)$,  and we can verify that
\begin{align*}
\lim\limits_{n\rightarrow\infty}\Big|\varepsilon^{\alpha}n^{(\alpha-1)}
\sum\limits_{k\geq \log n}\mbb{P}(Z_{n}=k)\mbb{P}(S_{k}\geq \varepsilon k)
-aI( \alpha-1,\sigma)\Big|=0.
\end{align*}
Finally, from \eqref{2.10},
\begin{align*}
\sum\limits_{1\leq k<\log n}
\mbb{P}(Z_{n}=k)\mbb{P}(S_{k}\geq \varepsilon_{n}k)
\leq cn^{-\sigma}(\log n)^{\sigma}=o(n^{-(\alpha-1)}).
\end{align*}
Combining above discussions we obtain \eqref{t1.3c}.}\qed

\textbf{\Large References}
\begin{enumerate}\small
\renewcommand{\labelenumi}{[\arabic{enumi}]}\small
\bibitem{AN72}  Athreya. K. B. and Ney. P. E. (1972). {\it Branching Processes.} Springer.
\bibitem{A94}  Athreya, K. B. (1994). {\it Large deviation rates for  branching processes I. single type case.} Ann. Appl. Probab. \textbf{4}(3), 779-790
\bibitem{B82}  Mellein, B. (1982b). {\it Local limit theorems for the critical Galton-Watson process with immigration.} Rev. Colomb. Mat. \textbf{16}(1-2), 31-56
\bibitem{B00} Borovkov, A. A. (2000). {\it Estimates for sums and maxima of sums of random variables when the Cram\'{e}r condition is not satisfied.} Sibirian. Math. J. \textbf{41}, 811-848
\bibitem{H71}  Heyde, C. and Brown, B. (1971). {\it An invariance principle and some convergence rate results for branching processes.} Z. Wahrsch. Verw. Gebiete. \textbf{20}(4), 271-278
\bibitem{Ke66} Kesten, H., Ney, P. and Spitzer, F. (1966). {\it The Galton-Watson process with mean one and finite variance},  Theor. Probab. Appl. \textbf{11}, 579-611
\bibitem{LZ16}  Liu, J. N. and Zhang, M. (2016). {\it Large deviation for supercritical branching processes with immigration.} Acta. Math. Sinica.  \textbf{32}(8), 893-900
\bibitem{18}  Li, D. D. and Zhang, M. (2018). {\it Asymptotic behaviors for critical branching processes with immigration.} Acta. Math. Sinica.  \textbf{35}(4), 537-549
\bibitem{N67}  Nagaev, A. V. (1967). {\it On estimating the expected number of direct descendants of a particle in a branching process.} Theor. Probab. Appl. \textbf{12}, 314-320
\bibitem{N79}  Nagaev, A. V. (1979). {\it Large deviations of sums of independent random variables.} Ann. Probab. Appl. \textbf{7}(5), 745-789
\bibitem{N03}  Ney, P. E. and Vidyashankar, A. N. (2003). {\it Harmonic moments and large deviation rates for supercritical branching processes.} Ann. Appl. Probab. \textbf{13}(2), 475-489
\bibitem{N04}  Ney, P. E. and Vidyashankar, A. N. (2004). {\it Local limit theory and large deviations for supercritical branching processes.} Ann. Appl. Probab. \textbf{14}(3), 1135-1166
\bibitem{N06}  Nagaev, S. V. and Vachtel, V. I. (2006). {\it On the local limit theorem for a critical Galton-Watson process.} Theor. Probab. Appl. \textbf{50}(3), 400-419
\bibitem{F07}  Fleischmann, K. and Wachtel, V. (2008). {\it Large deviations for sums indexed by the generations
 of a Galton-Watson process.}  Probab. Theory Relat. Fields, \textbf{141}(3), 445-470
\bibitem{P69}  Pakes, A. G. (1971). {\it On the critical Galton-Watson process with immigration.} J. Aust. Math. Soc. \textbf{12}(4), 476-482.
\bibitem{P72}  Pakes, A. G. (1972). {\it Further results on the critical Galton-Watson process with immigration.} J. Aust. Math. Soc. \textbf{13}(3), 277-290
\bibitem{P75}  Pakes, A. G. (1975). {\it Non-parametric estimation in the Galton-Watson processes.} Math. Biosci. \textbf{26}(1), 1-18
\bibitem{PetVV75}
Petrov, V. V. (1975). {\it Sums of independent random variables.} Springer-Verlag, Berlin.
\bibitem{S17}  Sun, Q. and Zhang, M. (2017). {\it Harmonic moments and large deviations for supercritical branching processes with immigration.} Front. Math. China. \textbf{12}(5), 1201-1220
\bibitem{OlavK}  Olav, Kallenberg.  {\it Foundations of modern probability.}  www.sciencep.com
\end{enumerate}
\end{document}